\documentclass[a4pitaper,11pt]{amsart}
\usepackage{amsmath,amsthm,amssymb}
\usepackage{xcolor}
\usepackage[mathscr]{eucal}
 \usepackage{cite}
 \usepackage{bm} 
\usepackage{upgreek}
\usepackage[bookmarks,bookmarksnumbered]{hyperref}
\usepackage{enumerate}

\numberwithin{equation}{section}

\newcommand{\car}{\curvearrowright}

\theoremstyle{plain}
\newtheorem{main}{Theorem}
\newtheorem{mcor}[main]{Corollary}

\newtheorem{theorem}{Theorem}[section]

\newtheorem{lemma}[theorem]{Lemma}
\newtheorem{proposition}[theorem]{Proposition}
\newtheorem{corollary}[theorem]{Corollary}
\theoremstyle{definition}

\newtheorem{definition}[theorem]{Definition}
\newtheorem{example}[theorem]{Example}
\newtheorem{notation}[theorem]{Notation}
\newtheorem{remark}[theorem]{Remark}

\begin{document}

\title[Examples of W$^*$ and C$^*$-superrigid product groups]
{Examples of W$^*$ and C$^*$-superrigid product groups}

\author{Jakub Curda, Daniel Drimbe}
\address{}
\email{}
\thanks {}

\begin{abstract} 
We provide a new large class $\mathcal C_{AFP}$ of amalgamated free product groups for which the product rigidity result from \cite{CdSS15} holds: if $G_1,\dots,G_n\in\mathcal C_{AFP}$ and $H$ is any group such that $L(G_1\times\dots\times G_n)\cong L(H)$, then there exists a product decomposition $H=H_1\times\dots\times H_n$ such that $L(H_i)$ is stably isomorphic to $L(G_i)$, for any $1\leq i\leq n$. The class $\mathcal C_{AFP}$ contains $W^*$ and $C^*$-superrigid groups from \cite{CD-AD20}. Consequently, we obtain examples of product groups that are both $W^*$ and $C^*$-superrigid.
\end{abstract}

\maketitle

\section{Introduction}

For a countable group $G$, we denote by $\ell^2(G)$ the Hilbert space of all square-summable complex functions on $G$, and by $\mathbb B(\ell^2(G))$ the algebra of bounded linear operators on $\ell^2(G)$. The complex group algebra $\mathbb C[G]$ can be naturally realized as a $*$-subalgebra of $\mathbb B(\ell^2(G))$ via the left convolution action on $\ell^2(G)$.
The {\it reduced group $C^*$-algebra $C_r^*(G)$} and {\it the group von Neumann algebra $L(G)$} of $G$ are defined as the closures of $\mathbb C[G]$ in $\mathbb B(\ell^2(G))$ with respect to the operator norm topology and the weak operator topology, respectively \cite{MvN43}.
An important theme in operator algebras is the classification of the group algebras $C_r^*(G)$ and $L(G)$, and hence the understanding of how much information about the group $G$ is encoded in these algebras.

Von Neumann algebras of amenable groups retain only limited information about the groups from which they arise. If $G$ is an infinite abelian group, then $L(G)$ is isomorphic to the abelian algebra $L^\infty(\widehat G, m_{\widehat G})$, where $m_{\widehat G}$ is the Haar measure on $\widehat G$, and hence to
$L^\infty([0,1], \lambda)$, where $\lambda$ is the Lebesgue measure. Thus, $L(G)$ does not have any recollection about torsion freeness or finite generation. Connes' breakthrough result \cite{Co76} asserts that the same strong lack of rigidity occurs for the class of amenable groups $G$ that are icc (i.e., all non-trivial conjugacy classes of $G$ are infinite). More precisely, Connes showed that group von Neumann algebras of icc amenable groups are isomorphic to the hyperfinite II$_1$ factor. 

In sharp contrast, reduced $C^*$-algebras of amenable groups can completely remember the underlying group. A classical result of Scheinberg \cite{Sc74} asserts that all torsion free abelian groups $G$ are {\it $C^*$-superrigid}, i.e. if $C_r^*(G)\cong C_r^*(H)$, then $G\cong H$. Examples of non-abelian $C^*$-superrigid groups have only been discovered relatively recently: certain torsion-free virtually abelian groups in \cite{CKRTW17} and \cite{CLMW25}, two-step nilpotent groups in \cite{ER18}, and free nilpotent groups in \cite{Om18}.

The non-amenable case becomes much more complex in both von Neumann and $C^*$-algebras. Major progress has been made since the advent of Popa’s deformation/rigidity theory \cite{Po07}, leading to the discovery of numerous situations in which algebraic and analytic properties of a group $G$ that can be recovered from its group von Neumann algebra  $L(G)$, see the surveys \cite{Va10a, Io12b, Io17}. Remarkably, Ioana, Popa, and Vaes identified in \cite{IPV10} the first examples of countable groups  $G$ that are  {\it $W^*$-superrigid}, i.e. if $L(G)\cong L(H)$, then $G\cong H$. 
Subsequently, additional examples of $W^*$-superrigid groups have been found \cite{BV12, Be14, CI17, CD-AD20, CD-AD21, CIOS21, CFQOT24, DV24, DV25, AMCFQ26}.

The first non-amenable $C^*$-superrigid groups $G$ are certain amalgamated free products with trivial amenable radical, as shown by Chifan and Ioana \cite{CI17}. Having already established $W^*$-superrigidity for these groups, $C^*$-superrigidity then follows as a consequence of the uniqueness of the trace on $C_r^*(G)$ \cite{BKKO14}. Subsequently, additional examples of $C^*$-superrigid groups arising from amalgamated free products, HNN-extension groups, coinduced groups, and semi-direct products (including wreath products with non-amenable core) have been discovered in \cite{CD-AD20, CD-AD21}. 

Despite these advances, identifying new classes of $C^*$-superrigid groups remains a challenging problem. In this work, we address this question by presenting in Theorem \ref{A} and Corollary \ref{corA} a broad class of $W^*$ and $C^*$-superrigid product groups. 
For presenting the result, we introduce the following class of groups.

\begin{definition}
    A group $G$ belongs to $\mathcal C_{AFP}$ if $G=G_1*_{\Sigma}G_2$ admits an amalgamated free product decomposition satisfying:
\begin{itemize}
    \item [(i)] $\Sigma$ is a common amenable icc subgroup of $G_1$ and $G_2$,

    \item [(ii)] $G_i=G_{i}^1\times G_{i}^2$ is a product of two icc, non-amenable, {\it relatively solid} groups for any $i\in\{1,2\}$.

\end{itemize}

The definition of relatively solid groups is introduced in Section \ref{section.rsg} and examples of such groups include non-amenable hyperbolic groups. Moreover, if the following two conditions hold then we say that  $G$ belongs to $\mathcal C^0_{AFP}$.

\begin{itemize}
    \item [(iii)] For all $i,j\in\{1,2\}$, $G_i^j$ is a generalized wreath product of the form $\mathbb Z_2\wr_{\Gamma/K} \Gamma$, where $K<\Gamma$ is an amenable malnormal subgroup and $\Gamma$ contains an infinite normal property (T) subgroup. 

    \item [(iv)] ${\rm QN}_{G_i}^{(1)}(\Sigma)=\Sigma$ for any $i\in\{1,2\}$ and there exist $g_1,\dots,g_m\in G$ such that $\cap_{k=1}^m g_k \Sigma g_k^{-1}$ is finite.

\end{itemize}

\end{definition}

Note that \cite{CD-AD20} shows that every group $G\in \mathcal C_{AFP}^0$ is both $W^*$ and $C^*$-superrigid. In the next theorem and its corollary, we extend this result to arbitrary finite products of such groups.

\begin{main}\label{A}
Let $G_1,\dots,G_n$ be groups from $\mathcal C^0_{AFP}$ and denote $G=G_1\times\dots\times G_n$. Let $H$ be an arbitrary group and let $\theta: L(G)^t\to L(H)$ be a $*$-isomorphism for some $t>0$. 

Then $t=1$ and $G\cong H$. Moreover, there exist a group isomorphism $\delta: G\to H$, a unitary $w\in L(H)$ and a character $\eta:G\to\mathbb T$ such that $\theta(u_g)= \eta(g) w v_{\delta(g)} w^*$, for any $g\in G$. Here, we denoted by $\{u_g\}_{g\in G}$ and by $\{v_{h}\}_{h\in H}$ the canonical generating unitaries of $L(G)$ and $L(H)$, respectively.
\end{main}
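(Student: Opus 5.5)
The plan is to reduce Theorem \ref{A} to two inputs. The first is a product rigidity theorem for the class $\mathcal C_{AFP}$, in the spirit of \cite{CdSS15}, which splits $H$ as a product. The second is the $W^*$-superrigidity of each factor $G_i\in\mathcal C^0_{AFP}$ from \cite{CD-AD20}, which we then apply factor by factor and reassemble.

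\textbf{Step 1: splitting $H$.} Let $M=L(H)$. Consider the comultiplication $\Delta:M\to M\bar\otimes M$, $\Delta(v_h)=v_h\otimes v_h$, and view $M$ as $\theta(L(G)^t)$.
\begin{itemize}
\item For each $i$, set $\widehat{G_i}=\prod_{j\ne i}G_j$ and analyse the subalgebra $\Delta(\theta(L(\widehat{G_i})))\subset M\bar\otimes M$.
\item Each $G_j=G_j^{1}*_{\Sigma}G_j^2$ is an amalgamated free product with amenable core. So we apply the intertwining dichotomy for amalgamated free products (Ioana's / Vaes' results on AFP algebras) to the commuting subalgebras $\Delta(\theta(L(G_i)))$ and $\Delta(\theta(L(\widehat{G_i})))$.
\item Use relative solidity of the factors $G_i^{j}$ to exclude the possibility that large non-amenable commuting pieces embed into the amenable core $L(\Sigma)$. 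Also exclude that they sit inside a single vertex algebra in a way incompatible with having a non-amenable relative commutant.
\item Conclude, as in \cite{CdSS15} and the later work of Drimbe et al., that $\Delta(\theta(L(G_i)))\prec^s M\bar\otimes \theta(L(G_i))$ type intertwinings hold. The comultiplication trick then produces subgroups $H_1,\dots,H_n$ of $H$ with $H=H_1\times\dots\times H_n$.
\item Finally, obtain a unitary $u$ and scalars $t_i$ with $\prod t_i=t$ such that $u\theta(L(G_i)^{t_i})u^*=L(H_i)$, after amplification bookkeeping.
\end{itemize}
Presumably this is the paper's main product-rigidity theorem for $\mathcal C_{AFP}$, which I would invoke directly if it is stated before Theorem \ref{A}.

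\textbf{Step 2: superrigidity of each factor.} For each $i$ we have an isomorphism $\theta_i:L(G_i)^{t_i}\to L(H_i)$. By \cite{CD-AD20}, every group in $\mathcal C^0_{AFP}$ satisfies a strong $W^*$-superrigidity: any such $\theta_i$ forces $t_i=1$, and $\theta_i(u_g)=\eta_i(g)w_iv_{\delta_i(g)}w_i^*$ for a group isomorphism $\delta_i:G_i\to H_i$, a character $\eta_i$, and a unitary $w_i\in L(H_i)$. Hence all $t_i=1$ and therefore $t=1$.

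\textbf{Step 3: reassembling.} Define $\delta=\delta_1\times\dots\times\delta_n$ and $\eta=\prod\eta_i$, and set $w=u^*(w_1\otimes\dots\otimes w_n)$, using $L(H)=\bar\otimes L(H_i)$. Then $\theta(u_g)=\eta(g)wv_{\delta(g)}w^*$, as required.

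\textbf{Main obstacle.} The real work is Step 1, and within it the control over all factors simultaneously. Identifying one tensor factor is not enough: we need the full decomposition, with the $t_i$ arising consistently. Within Step 1 the main difficulty is excluding degenerate intertwinings into $L(\Sigma)$ or into one vertex algebra. For this I would first prove a lemma: if $A\subset pL(G)p$ has non-amenable relative commutant and lies in one factor $L(G_i)$, then $A$ does not embed into $L(G_i^{j}\times\widehat{G_i})$-type corners of the core. This should follow from relative solidity together with the icc and amenability of $\Sigma$. Once that lemma is available, the rest is an induction on $n$ in the \cite{CdSS15} style.
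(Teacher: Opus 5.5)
Your architecture matches the paper's. Theorem \ref{B} gives $H=H_1\times\dots\times H_n$, a unitary $u$ and numbers $t_1\cdots t_n=t$ with $uL(H_i)u^*=L(G_i)^{t_i}$; the superrigidity of each $G_i$ from \cite{CD-AD20} is then applied and the pieces are reassembled as in your Step 3. Invoking Theorem \ref{B} as a black box is fine. The paper actually proves it through the peripheral structure of Section \ref{section.peripheral} and Theorem \ref{theorem.product.rigidity}, not through the direct comultiplication argument you sketch.

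\textbf{The gap is in Step 2.} You assert that \cite{CD-AD20} shows any isomorphism $L(G_i)^{t_i}\cong L(H_i)$ forces $t_i=1$. The superrigidity result for $\mathcal C^0_{AFP}$ that the paper takes from there, \cite[Theorem C]{CD-AD20}, concerns isomorphisms $L(G_i)\cong L(H_i)$, i.e.\ $t_i=1$. It does not exclude that a nontrivial amplification of $L(G_i)$ is a group von Neumann algebra. Theorem \ref{B} only controls the product $t_1\cdots t_n=t$, and $t$ is itself unknown. So without an extra argument you cannot conclude $t=1$, and you cannot even reach the situation where the superrigidity theorem applies factor by factor.

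\textbf{The missing ingredient} is Remark \ref{remark.Wsup.amplification}, and it uses new input from this paper.
\begin{enumerate}
\item Apply Theorem \ref{theorem.peripheral} with $n=1$ to $L(G_i)^{t_i}=L(H_i)$. This is why Section \ref{section.peripheral} is carried out for an arbitrary amplification $N=M^t$.
\item This yields a subgroup $\Theta<H_i$ and a unitary $u_i$ with $u_iL(\Theta)u_i^*=L(G_{i,1})^{t_i}$. Here $G_{i,1}$ is a vertex group of $G_i$, a product of two generalized wreath products $\mathbb Z_2\wr_{\Gamma/K}\Gamma$.
\item Applying \cite[Corollary B]{CD-AD20} to $L(G_{i,1})^{t_i}\cong L(\Theta)$ forces $t_i=1$.
\end{enumerate}
Once all $t_i=1$ (hence $t=1$), your Steps 2 and 3 go through as written. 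One small correction: with $\theta_i=\mathrm{Ad}(u^*)\circ\theta_{|L(G_i)}$ and Theorem \ref{B}'s convention $uL(H_i)u^*=L(G_i)$, the implementing unitary is $w=u(w_1\otimes\dots\otimes w_n)$, not $u^*(w_1\otimes\dots\otimes w_n)$.
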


Combining Theorem \ref{A} with the unique trace property of products of groups from $\mathcal C^0_{AFP}$, we obtain the corresponding $C^*$-superrigidity result.

\begin{mcor}\label{corA}
    Let $G_1,\dots,G_n$ be groups from $\mathcal C^0_{AFP}$ and denote $G=G_1\times\dots\times G_n$. Let $H$ be an arbitrary group and let $\theta: C^*_r(G)\to C^*_r(H)$ be a $*$-isomorphism. 

    Then there is a group isomorphism $\delta: G\to H$, a unitary $w\in L(H)$ and a character $\eta:G\to\mathbb T$ such that $\theta(u_g)= \eta(g) w v_{\delta(g)} w^*$, for any $g\in G$. 
\end{mcor}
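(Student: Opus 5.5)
The strategy is to reduce Corollary \ref{corA} to Theorem \ref{A} by passing from the reduced $C^*$-algebras to the group von Neumann algebras. The bridge is uniqueness of the trace: if $C^*_r(G)$ has a unique tracial state, then any $*$-isomorphism $\theta: C^*_r(G)\to C^*_r(H)$ intertwines the canonical traces. It therefore extends to a trace-preserving $*$-isomorphism $\bar\theta: L(G)\to L(H)$ between the GNS completions, and Theorem \ref{A}, applied with $t=1$, yields $\delta$, $w$ and $\eta$ with $\bar\theta(u_g)=\eta(g)wv_{\delta(g)}w^*$. Restricting to $C^*_r(G)$ gives the same formula for $\theta$, since $\theta(u_g)=\bar\theta(u_g)$.

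The main point to verify is that $C^*_r(G)$ has a unique trace for $G=G_1\times\dots\times G_n$ with each $G_i\in\mathcal C^0_{AFP}$. By \cite{BKKO14}, this is equivalent to $G$ having trivial amenable radical. The amenable radical of a product projects onto an amenable normal subgroup of each factor, so it suffices to show that each $G_i$ has trivial amenable radical.

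For $G_i=A_1*_\Sigma A_2$, I would argue as follows. Let $N\lhd G_i$ be amenable normal. Since $A_1,A_2$ are products of non-amenable groups, the amalgam is non-degenerate ($\Sigma$ has index $>2$ in each factor, and in fact infinite index), so $G_i$ acts minimally on its Bass--Serre tree without a fixed end. A normal amenable subgroup must then act elliptically. Otherwise it would have an invariant line or end, which normality would force to be $G_i$-invariant, contradicting non-elementarity of the action. Being elliptic and normal, $N$ fixes every vertex of the tree, so $N\subset\bigcap_{g}g\Sigma g^{-1}$. Condition (iv) makes this intersection finite, so $N$ is a finite normal subgroup. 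Finally, $G_i$ is icc: the vertex groups are products of icc groups, and amalgams with infinite-index, almost malnormal-type edge groups are icc. An icc group has no nontrivial finite normal subgroup, because every element of such a subgroup would have a finite conjugacy class. Hence $N=\{e\}$.

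The main obstacle I expect is this last icc/finite-radical step. If the tree argument is delicate, I would instead cite \cite{CD-AD20}, which already establishes $C^*$-superrigidity (hence trivial amenable radical, via unique trace) for groups in $\mathcal C^0_{AFP}$. I would then use that trivial amenable radical is preserved under finite direct products.
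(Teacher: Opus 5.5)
Your proposal is correct. The top-level reduction is the same as the paper's: trivial amenable radical gives a unique trace by \cite{BKKO14}, so $\theta$ is trace-preserving and extends to $L(G)\cong L(H)$, and Theorem \ref{A} with $t=1$ finishes.

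The difference is in how you show the amenable radical is trivial (the paper's Lemma \ref{lemma.trivial.radical}). The paper works operator-algebraically on the whole product at once. $L(A)$ is a regular amenable subalgebra, so \cite[Theorem A]{Va13} gives $L(A)\prec^s_M L(\Sigma_1\times\dots\times\Sigma_n)$. Then \cite[Proposition 8]{HPV11} upgrades this to an intertwining into the finite group $\times_i\bigcap_k g_i^k\Sigma_i(g_i^k)^{-1}$, which forces $A$ to be finite.

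You instead project onto each factor and use Bass--Serre theory:
\begin{itemize}
\item the action of $G_i$ on its tree is minimal and of general type, because the edge group has infinite index;
\item hence an amenable normal subgroup $N$ has a global fixed point;
\item its fixed subtree is $G_i$-invariant, so by minimality it is the whole tree;
\item therefore $N\subset\bigcap_{g}g\Sigma g^{-1}$, which is finite by (iv).
\end{itemize}
Your route is more elementary and self-contained, needing only the classical classification of group actions on trees. The paper's route reuses intertwining machinery already in play and handles the product directly.

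Two small corrections. First, your justification that $G_i$ is icc (``almost malnormal-type edge groups'') is not supported by (iv). You do not need it, though: $N\subset\Sigma$ is finite and normal in $G_i$, so every element of $N$ has finite $\Sigma$-conjugacy class, and $\Sigma$ is icc by (i), so $N$ is trivial. Second, drop the fallback as phrased: $C^*$-superrigidity by itself does not imply trivial amenable radical, since torsion-free abelian groups are $C^*$-superrigid. That citation would only work if you point to where \cite{CD-AD20} actually proves the radical is trivial.
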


These examples, along with those obtained independently by Ariza Mej\'{\i}a, Chifan, and Fern\'{a}ndez Quero, are the first $C^*$-superrigid non-amenable product groups \cite{AMCFQ26}. Moreover, since \cite{AMCFQ26} establishes $C^*$-superrigidity for infinite direct sums, they also provide the first examples of non-amenable $C^*$-superrigid groups that are not $W^*$-superrigid.

\begin{example}To provide concrete examples in the class $\mathcal C_{AFP}^0$, let $\Gamma$ be a hyperbolic group admitting an infinite normal subgroup with property (T) and two infinite cyclic malnormal subgroups $K, L < \Gamma$ such that $K \cap g L g^{-1} = \{1\}$ for all $g \in \Gamma$. See \cite[Section~6]{CD-AD20} for a construction of such examples using techniques from geometric group theory \cite{BO06}. The group $G = G_1 *_\Sigma G_2$ belongs to $\mathcal C_{AFP}^0$ whenever $G_1 = G_2 = \mathbb{Z}_2 \wr_{\Gamma/K} \Gamma \times \mathbb{Z}_2 \wr_{\Gamma/K} \Gamma$ and $\Sigma = \{ (g,g) \mid g \in \mathbb{Z}_2^{(\Gamma/K)} \rtimes L \}$. Indeed, note that $\Sigma$ is an icc amenable group, while $G_1$ and $G_2$ are relatively solid; see Proposition~\ref{proposition.rs}.
The condition $K \cap g L g^{-1} = \{1\}$ for all $g \in \Gamma$ ensures that condition (iv) defining the class $\mathcal C_{AFP}^0$ is satisfied.
\end{example} 


The main ingredient in proving Theorem \ref{A} is establishing the following product rigidity for von Neumann algebras of groups in $\mathcal C_{AFP}$ in the spirit of \cite{CdSS15}.


\begin{main}\label{B}
Let $G=G_1\times\dots\times G_n$ be a product of $n\ge 2$ countable groups that belong to $\mathcal C_{AFP}$ and denote $M=L(G)$. Let $H$ be any countable group such that $M^t=L(H)$ for some $t>0.$

Then there exist a product decomposition $H=H_1\times\dots\times H_n$, a unitary $u\in\mathcal U(M^t)$ and some positive numbers $t_1,\dots,t_n$ with $t_1\dots t_n=t$ such that $uL(H_i)u^*=L(G_i)^{t_i}$, for any $i\in \{1,\dots, n\}.$
\end{main}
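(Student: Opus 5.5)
The plan follows the strategy of \cite{CdSS15} (and its refinements by Drimbe and others), with the comultiplication $\Delta: M^t\to M^t\bar\otimes M^t$, $\Delta(v_h)=v_h\otimes v_h$, induced by the group structure of $H$. Write $M=L(G_1)\bar\otimes\cdots\bar\otimes L(G_n)$ and, for $1\le i\le n$, let $\hat G_i=\prod_{j\neq i}G_j$. The goal is to show that for each $i$ there is $j$ with
\[
\Delta(L(G_i))\prec_{M\bar\otimes M} M\bar\otimes L(G_j)
\quad\text{or, symmetrically,}\quad
\Delta(L(\hat G_i))\prec_{M\bar\otimes M} M\bar\otimes L(\hat G_j).
\]
Then one applies the abstract criterion of \cite{CdSS15} (or its later versions by Drimbe--Hoff--Ioana and Isono) for recovering a product decomposition of $H$. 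That criterion produces $H=H_1\times H_2$ with $L(H_1)$ and $L(H_2)$ unitarily conjugate to amplifications of $L(G_1)$ and $L(\hat G_1)$. An induction on $n$ then gives the full decomposition $H=H_1\times\dots\times H_n$. The base case uses the same argument with $n=2$, and the cutting down of amplifications by $u$ and the constants $t_i$ is routine.

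\textbf{Step 1: intertwining for each $\Delta(L(G_i))$.} This step uses the AFP structure and relative solidity. Fix $i$ and put $Q=\Delta(L(G_i))$. Its relative commutant $\Delta(L(\hat G_i))$ is non-amenable, indeed not amenable relative to $M\bar\otimes L(\hat G_j)$ for any $j$. View $M\bar\otimes M$ as $M\bar\otimes L(\hat G_j)\bar\otimes L(G_j)$, where $L(G_j)=L(G_j^1\times G_j^2)*_{L(\Sigma)}L(G_j^3\times G_j^4)$. The Ioana/Vaes dichotomy for amalgamated free products (relative to the extra tensor factor $M\bar\otimes L(\hat G_j)$) then gives three cases for each $j$:
\begin{itemize}
\item[(a)] $Q\prec M\bar\otimes L(\hat G_j)\bar\otimes L(\Sigma)$;
\item[(b)] $Q'\cap(M\bar\otimes M)$ is amenable relative to $M\bar\otimes L(\hat G_j)\bar\otimes L(\Sigma)$;
\item[(c)] $\mathcal N(Q)''\prec M\bar\otimes L(\hat G_j)\bar\otimes L(G_j^k)$ for a vertex group $G_j^k$.
\end{itemize}
Since $\Sigma$ is amenable, case (a) means $Q$ intertwines into $M\bar\otimes L(\hat G_j)$ "up to amenable". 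Case (b) is excluded by the non-amenability of the commutant. Case (c) lands in $M\bar\otimes L(\hat G_j)\bar\otimes L(G_j^{k_1}\times G_j^{k_2})$. The relative solidity of $G_j^{k_1}$ and $G_j^{k_2}$ (Section \ref{section.rsg}) is then applied to the commuting pair $Q,\ Q'\cap(M\bar\otimes M)$. It forces either $Q$ or its commutant to be amenable relative to $M\bar\otimes L(\hat G_j)\bar\otimes L(G_j^{k_1})$ (and similarly for $k_2$), in the usual way used for products of bi-exact groups. Iterating over $j=1,\dots,n$ with the relative amenability combination lemma (Popa--Vaes, Drimbe--Hoff--Ioana), and using the icc/non-amenability of each factor $G_i^k$, one shows that $Q$ cannot be "spread" over all copies. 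Since $\Delta(M)$ is not amenable relative to $M\bar\otimes 1$, this yields an index $j=\sigma(i)$ with $\Delta(L(G_i))\prec M\bar\otimes L(G_{\sigma(i)})$ (after passing from relative amenability to intertwining via the $G_i^k$-solidity).

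\textbf{Step 2: map $\sigma$ and conclusion.} Counting the tensor factors, $\sigma$ must be a bijection; otherwise some $\Delta(L(G_i))$ and $\Delta(L(G_{i'}))$ would both intertwine into the same $M\bar\otimes L(G_j)$, contradicting the non-amenability of their commutants. Hence $\Delta(L(\hat G_i))\prec M\bar\otimes L(\hat G_{\sigma(i)})$ as well. Passing to "strong intertwining" (Drimbe--Hoff--Ioana, via the fact that the normalizers of corners remain large) puts us in the hypothesis of the \cite{CdSS15} theorem, which produces $H=H_1\times H_2$ with $L(H_1)$ and $L(H_2)$ conjugate to amplifications of $L(G_i)$ and $L(\hat G_i)$. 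The induction then finishes the proof.

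\textbf{Main obstacle.} The hardest part is the deformation/rigidity analysis in Step 1. The AFP dichotomy only gives intertwining into the vertex subalgebras tensored with a large algebra, so the passage through relative solidity must be carried out with the extra tensor factor $M\bar\otimes L(\hat G_j)$ present. In particular, case (a) must be ruled out or handled using the amenable icc $\Sigma$, so that it cannot absorb a diffuse non-amenable piece. I would attack this by exploiting relative solidity in its relative (tensor-stable) form, together with Ioana's result that non-amenable subalgebras intertwining into $L(\Sigma)$ have normalizers controlled by the vertex algebras.
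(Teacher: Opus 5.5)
There is a genuine gap in Step~1, and it is exactly where the amalgam enters.

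First, your premise that $\Delta(L(\widehat G_i))$ is not amenable relative to $M\bar\otimes L(\widehat G_j)$ for \emph{every} $j$ is false. When $H=G$ one has $\Delta(L(G_i))'\cap(M\bar\otimes M)=L(\widehat G_i)\bar\otimes L(\widehat G_i)\subset M\bar\otimes L(\widehat G_i)$. What \cite[Proposition 7.2]{IPV10} and \cite[Lemma 2.8]{DHI16} actually give is non-amenability relative to $M\bar\otimes L(\widehat G_j)$ for \emph{some} $j$. In the model case $H=G$, $j=i$, your cases (a) and (c) fail, because $\Sigma_i$ and $G_{i,k}$ have infinite index in $G_i$. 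Only (b) holds there, so the argument as written would refute the identity isomorphism.

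Second, and more seriously, case (a), $\Delta(L(G_i))\prec M\bar\otimes L(\widehat G_j\times\Sigma_j)$, holds for every $j\neq i$ when $H=G$. So it cannot be ruled out, and none of the tools you cite upgrades it to $\Delta(L(G_i))\prec M\bar\otimes L(\widehat G_j)$:
\begin{itemize}
\item Since $\Sigma_j$ is infinite, its amenability does not let you delete the diffuse factor $L(\Sigma_j)$ from an intertwining target.
\item Relative solidity of the $G_j^k$ only acts once you are inside a vertex algebra, not on the amalgam.
\item The normalizer control of \cite{IPP05,CI17} (Lemma~\ref{lemma.normalizer}) runs the other way: it confines normalizers of subalgebras of a vertex algebra that do \emph{not} embed into $L(\Sigma)$.
\end{itemize}
Combining over $j$, the best you can reach is something like $\Delta(L(G_i))\prec M\bar\otimes L(G_{\sigma(i)}\times\prod_{j\neq\sigma(i)}\Sigma_j)$; compare alternative (1) of Lemma~\ref{lemma.classification}. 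Such targets need not have non-amenable relative commutant in $M$. Hence Ioana's ultrapower technique \cite{Io11} cannot be started from them, and the criterion of \cite{CdSS15} is never reached. Your ``Main obstacle'' paragraph names this problem but supplies no mechanism for it.

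The paper never proves $\Delta(L(G_i))\prec M\bar\otimes L(G_j)$; it gets around the amalgam by working with vertex subgroups.
\begin{itemize}
\item Lemma~\ref{lemma.ultrapower} gives $\Delta(L(G^1_{i,1}\times G_{\widehat i}))\prec M\bar\otimes L(G^c_{a,b}\times G_{\widehat a})$. This target's relative commutant contains the non-amenable algebra $L(G^{c'}_{a,b})$, $c'\neq c$, so \cite{Io11} applies and yields a subgroup of $H$ with non-amenable centralizer.
\item The quasi-normalizer machinery of \cite{IPP05,FGS10,CI17}, together with the icc property of $\Sigma_i$, then produces $\Theta_n<H$ with $uL(\Theta_n)u^*=L(G_{\widehat n}\times G_{n,1})^t$ (Theorems~\ref{theorem.first.step} and~\ref{theorem.peripheral}).
\item $L(\Theta_n)$ comes from a product of $\mathcal C_{AFP}$ groups with the relatively solid groups $G^1_{n,1},G^2_{n,1}$. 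In this mixed setting Theorem~\ref{theorem.product.rigidity} works, because the relatively solid factors provide amalgam-free alternatives (Claim~1 there).
\item This gives $\Lambda_n<H$ with $wL(\Lambda_n)w^*=L(G_{\widehat n})^{t'}$. Then \cite[Theorems 4.6 and 4.7]{CdSS17} split $H=H_{\widehat n}\times H_n$, and induction finishes.
\end{itemize}
Your outer induction is compatible with this. Step~1, however, must be replaced by this detour through the peripheral structure, or by some genuinely new way of eliminating the amalgams.
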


Additional product rigidity results have been established for products of bi-exact groups, groups with positive first $\ell^2$-Betti numbers, and wreath-like product groups \cite{CU18, CD-AD20, Dr20, CDD23, AMCOS25, DD25, AMCFQ26}. Theorem~\ref{B} further extends this list to include new examples of products of  amalgamated free product groups.

To prove Theorem~\ref{B}, we develop several techniques in the theory of von Neumann algebras. In Section~\ref{section.solidity}, we establish solidity-type results for von Neumann algebras associated with products of groups in the class $\mathcal C_{AFP}$. In Section~\ref{section.product.rigidity}, we show that the von Neumann algebra $L(\tilde G \times \tilde G^0)$ retains the product structure whenever $\tilde G$ is a product of groups from $\mathcal C_{AFP}$ and $\tilde G^0$ is a product of icc relatively solid groups. In Section~\ref{section.peripheral}, we present results that allow us to reconstruct, at the level of von Neumann algebras, the peripheral structure of products of groups belonging to the class $\mathcal C_{AFP}$.

\textbf{Acknowledgments.} JC was supported by EPSRC grant EP/X026647/1. DD was partially supported by the EPSRC grant EP/X026647/1, NSF grant DMS \#2452525, and the Simons Foundation.  For the purpose of Open Access, the authors have applied a CC BY public copyright licence to any Author Accepted Manuscript (AAM) version arising from this submission.

\section{Preliminaries}

\subsection{Terminology}
In this paper we consider {\it tracial von Neumann algebras} $(M,\tau)$, i.e. von Neumann algebras $M$ equipped with a faithful normal tracial state $\tau: M\to\mathbb C.$ This induces a norm on $M$ by the formula $\|x\|_2=\tau(x^*x)^{1/2},$ for all $x\in M$. We will always assume that $M$ is a {\it separable} von Neumann algebra, i.e. the $\|\cdot\|_2$-completion of $M$ denoted by $L^2(M)$ is separable as a Hilbert space.


We denote by $\mathcal Z(M)$ the {\it center} of $M$ and by $\mathcal U(M)$ its {\it unitary group}. For two von Neumann subalgebras $P_1,P_2\subset M$, we denote by $P_1\vee P_2=W^*(P_1\cup P_2)$ the von Neumann algebra generated by $P_1$ and $P_2$. 

Let $P\subset M$ be a unital inclusion of von Neumann algebras. We denote by $E_{P}:M\to P$ the unique $\tau$-preserving {\it conditional expectation} from $M$ onto $P$, by $e_P:L^2(M)\to L^2(P)$ the orthogonal projection onto $L^2(P)$ and by $\langle M,e_P\rangle$ the Jones' basic construction of $P\subset M$. We also denote by $P'\cap M=\{x\in M\mid xy=yx, \text{ for all } y\in P\}$ the {\it relative commutant} of $P$ in $M$ and by $\mathcal N_{M}(P)=\{u\in\mathcal U(M)\mid uPu^*=P\}$ the {\it normalizer} of $P$ in $M$.  We say that $P$ is {\it regular} inside $M$ if the von Neumann algebra generated by $\mathcal N_M(P)$ equals $M$. 

The {\it amplification} of a II$_1$ factor $(M,\tau)$ by a number $t>0$ is defined to be $M^t=p(\mathbb B(\ell^2(\mathbb Z))\bar\otimes M)p$, for a projection $p\in \mathbb B(\ell^2(\mathbb Z))\bar\otimes M$ that satisfies $($Tr$\otimes\tau)(p)=t$. Here Tr denotes the usual trace on $\mathbb B(\ell^2(\mathbb Z))$. Since $M$ is a II$_1$ factor, $M^t$ is well defined. Note that if $M=P_1\bar\otimes P_2$, for some II$_1$ factors $P_1$ and $P_2$, then there exists a natural identification $M=P_1^t\bar\otimes P_2^{1/t}$, for every $t>0.$


Finally, for a positive integer $n$, we denote by $\overline{1,n}$ the set $\{1,\dots, n\}$. For any subset $S\subset \overline{1,n}$ we denote its complement by $\widehat S=\overline{1,n}\setminus S$. If $S=\{i\},$ we will simply write $\widehat i$ instead of $\widehat {\{i\}}$. Also, given
any product group $G=G_1\times \dots\times G_n$, we will denote its subproduct supported on $S$ by $G_S=\times_{i\in S}G_i$.

\subsection {Intertwining-by-bimodules} We next recall from  \cite [Theorem 2.1 and Corollary 2.3]{Po03} the powerful {\it intertwining-by-bimodules} technique of S. Popa.

\begin {theorem}[\!\!\cite{Po03}]\label{corner} Let $(M,\tau)$ be a tracial von Neumann algebra and $P\subset pMp, Q\subset qMq$ be von Neumann subalgebras. Let $\mathcal U\subset\mathcal U(P)$ be a subgroup such that $\mathcal U''=P$.

Then the following are equivalent:

\begin{enumerate}

\item There exist projections $p_0\in P, q_0\in Q$, a $*$-homomorphism $\theta:p_0Pp_0\rightarrow q_0Qq_0$  and a non-zero partial isometry $v\in q_0Mp_0$ such that $\theta(x)v=vx$, for all $x\in p_0Pp_0$.

\item There is no sequence $(u_n)_n\subset\mathcal U$ satisfying $\|E_Q(xu_ny)\|_2\rightarrow 0$, for all $x,y\in M$.

\end{enumerate}
\end{theorem}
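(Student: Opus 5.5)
The plan is the standard argument in the Jones basic construction $\langle M,e_Q\rangle$, equipped with its canonical semifinite trace $\mathrm{Tr}$ normalized by $\mathrm{Tr}(xe_Qy)=\tau(xy)$. The key tool throughout is the identity
$$\mathrm{Tr}\big(e_Q\, y^*u^*\,(x^*e_Qx)\,u\,y\, e_Q\big)=\|E_Q(xuy)\|_2^2,\qquad x,y\in M,\ u\in\mathcal U,$$
which converts the analytic condition (2) into a statement about $\mathcal U$-orbits of finite-trace positive elements of $q\langle M,e_Q\rangle q$ and of $p\langle M,e_Q\rangle p$.

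\emph{(2) $\Rightarrow$ (1).} If no sequence as in (2) exists, a standard contradiction argument yields $\delta>0$ and finite sets $x_1,\dots,x_k\in qMp$ and $y_1,\dots,y_l\in pMq$ such that $\sum_{i,j}\|E_Q(x_iuy_j)\|_2^2\ge\delta$ for all $u\in\mathcal U$. I would set $a=\sum_i x_i^*e_Qx_i$. This is a positive element of $p\langle M,e_Q\rangle p$ with $\mathrm{Tr}(a)<\infty$. Let $b$ be the unique element of minimal $\|\cdot\|_{2,\mathrm{Tr}}$-norm in the weakly closed convex hull of $\{uau^*:u\in\mathcal U\}$. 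By uniqueness, $b$ is $\mathcal U$-invariant, so $b\in P'\cap p\langle M,e_Q\rangle p$. Moreover $b\ge 0$, $\mathrm{Tr}(b)\le \mathrm{Tr}(a)<\infty$, and $b\neq0$, because the displayed identity gives $\sum_j\mathrm{Tr}(e_Qy_j^*\,c\,y_je_Q)\ge\delta$ for every $c$ in the convex hull. A spectral projection $f=1_{[\varepsilon,\infty)}(b)$ is then a nonzero projection in $P'\cap p\langle M,e_Q\rangle p$ of finite trace. Consequently $fL^2(M)$ is a $P$-$Q$ bimodule of finite right $Q$-dimension. Cutting by a suitable nonzero projection $p_0\in P$, chosen small enough that the corresponding subprojection of $f$ has right dimension at most $1$, I can realize $p_0fL^2(M)$ as isomorphic to a right $Q$-submodule of $q_0L^2(Q)$ for some $q_0\in Q$. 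The left action of $p_0Pp_0$ then gives a $*$-homomorphism $\theta:p_0Pp_0\to q_0Qq_0$. The image of $q_0$ under the bimodule isometry, after polar decomposition, gives a nonzero partial isometry $v\in q_0Mp_0$ with $\theta(x)v=vx$. I expect this dimension bookkeeping, which turns a finite-trace $P$-central projection into an honest homomorphism into $q_0Qq_0$ rather than into an amplification $M_n(Q)$, to be the main technical step.

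\emph{(1) $\Rightarrow$ (2).} Given $\theta$ and $v$, the element $v^*e_Qv$ commutes with $p_0Pp_0$, because $\theta(x)\in Q$ commutes with $e_Q$. To obtain full $P$-invariance, let $z$ be the central support of $p_0$ in $P$ and write $z=\sum_k w_k^*w_k$ with partial isometries $w_k\in P$ satisfying $w_kw_k^*\le p_0$. Then
$$d=\sum_k w_k^*v^*e_Qvw_k$$
is a nonzero positive element of $P'\cap p\langle M,e_Q\rangle p$ with $\mathrm{Tr}(d)\le\sum_k\tau(w_kw_k^*)<\infty$.

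Now suppose a sequence $u_n$ as in (2) exists. By the displayed identity, $\mathrm{Tr}(u_n a u_n^*c)\to0$ for all $a,c$ in the span of $Me_QM$. By $\|\cdot\|_{2,\mathrm{Tr}}$-density this extends to all finite-trace $a,c$. Taking $a=c=d$ gives $\mathrm{Tr}(u_ndu_n^*d)=\mathrm{Tr}(d^2)>0$ for every $n$, which is a contradiction.
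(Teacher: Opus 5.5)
The paper gives no proof of this statement; it is quoted from Popa \cite{Po03}, and your outline follows the standard basic-construction argument. Your direction (1)$\Rightarrow$(2) is correct. In (2)$\Rightarrow$(1), the minimal-norm element $b$ does produce a nonzero projection $f\in P'\cap p\langle M,e_Q\rangle p$ with $\mathrm{Tr}(f)<\infty$.

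\textbf{The gap} is in the step you flagged yourself. Cutting $f$ by a projection $p_0\in P$ until $\mathrm{Tr}(p_0f)\le 1$ does not make $p_0fL^2(M)$ isomorphic to a right submodule of $q_0L^2(Q)$. Two things go wrong.
\begin{enumerate}
\item Such an embedding exists exactly when $p_0f$ is Murray--von Neumann subequivalent to $e_Q$ in $\langle M,e_Q\rangle$. That is governed by the $\mathcal Z(Q)$-valued dimension, not by the scalar $\mathrm{Tr}$.
\item Projections of $P$ alone cannot in general reduce multiplicities when $P$ has a type I part.
\end{enumerate}
Your own construction exhibits the failure. Take $M=M_2(\mathbb C)$, $Q=D_2$ the diagonal subalgebra, $P=\mathbb C1$ and $\mathcal U=\{1\}$, so (2) holds; choose $x_1=e_{11}$, $x_2=e_{12}$. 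Then $b=a=x_1^*e_Qx_1+x_2^*e_Qx_2$ is the projection $f$ of $L^2(M)$ onto the first column, and $\mathrm{Tr}(f)=1$. But $Q$ acts on $fL^2(M)\cong\mathbb C^2$ through the character $\mathrm{diag}(s,t)\mapsto s$ with multiplicity $2$, while that character occurs only once in $L^2(Q)$. Moreover $P$ has no proper projection to cut with. The same happens with a diffuse $P$: take $P=L^\infty[0,1]\otimes 1\subset M=L^\infty[0,1]\bar\otimes M_2(\mathbb C)$, $Q=L^\infty[0,1]\bar\otimes D_2$, and $f$ the projection onto $L^2[0,1]\otimes C_1$, where $C_1$ is the first column of $M_2(\mathbb C)$. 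Here $\mathrm{Tr}(p_0f)$ can be made arbitrarily small, yet no $p_0fL^2(M)$ embeds into $L^2(Q)$. Your mechanism works only in special cases, e.g.\ $Q$ a factor and $P$ of type II$_1$.

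\textbf{What is needed} is a nonzero $p_0\in P$ together with a nonzero projection $g\le p_0f$ in $(p_0Pp_0)'\cap p_0\langle M,e_Q\rangle p_0$ that is subequivalent to $e_Q$. Equivalently, $gL^2(M)$ should be cyclic as a right $Q$-module. Then $gL^2(M)\cong q_0L^2(Q)$ for a projection $q_0\in Q$, and your polar-decomposition step yields $\theta$ and $v$.

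Producing $g$ is a separate argument that uses both $P$ and the relative commutant. One route:
\begin{enumerate}
\item Replace $f$ by $fJzJ$ for a suitable $z\in\mathcal Z(Q)$; note $JzJ\in P'\cap\langle M,e_Q\rangle$. Choose $z$ so that $fL^2(M)\cong r(\mathbb C^n\otimes L^2(Q))$ with $r\in M_n(Q)$. This gives a unital normal $*$-homomorphism $\psi:P\to rM_n(Q)r$.
\item On the type II$_1$ part of $P$, choose $p_0$ with center-valued trace at most $1/n$ and $\psi(p_0)\neq 0$. Comparing center-valued traces in $M_n(Q)$ shows $\psi(p_0)$ is subequivalent to $e_{11}\otimes q$.
\item On the type I part, choose $p_0$ abelian. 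Take $g$ in a maximal abelian subalgebra of $\psi(p_0)M_n(Q)\psi(p_0)$ containing $\psi(p_0Pp_0)$: an abelian projection of $M_n(Q)$, or, where that corner is of type II$_1$, a projection of small center-valued trace.
\end{enumerate}
Without some such argument, the passage from $f$ to $(p_0,q_0,\theta,v)$ is not justified.
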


If one of the equivalent conditions of Theorem \ref{corner} holds true, we write $P\prec_{M}Q$, and say that {\it a corner of $P$ embeds into $Q$ inside $M$.}

The next lemma is essentially contained in \cite[Section 3]{CD-AD20}. For completeness, we provide all the details.

\begin{lemma}\label{lemma.augm}
Let $H$ be a countable icc group, let $M=L(H)$ and let $\Delta:M\to M\bar\otimes M$ the $*$-homomorphism given by $\Delta(v_h)=v_h\otimes v_h$ for $h\in H$. 
Let $P\subset pMp$ be a von Neumann subalgebra and $\Sigma<H$ be subgroup such that $P\prec_M L(\Sigma)$. 

If $Q\subset qMq$ is a von Neumann subalgebra such that $L(\Sigma)\prec_M Q$, then $\Delta(P)\prec_{M\bar\otimes M} M\bar\otimes Q.$

\end{lemma}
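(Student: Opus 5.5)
The plan is to show the intertwining $\Delta(P)\prec_{M\bar\otimes M}M\bar\otimes Q$ directly, using criterion (2) of Theorem~\ref{corner}. The hypothesis $P\prec_M L(\Sigma)$ gives a concrete intertwiner, which $\Delta$ transports. The hypothesis $L(\Sigma)\prec_M Q$ gives a uniform lower bound over the group unitaries of $\Sigma$, which survives $\Delta$ because $\Delta(u_g)=u_g\otimes u_g$ is an elementary tensor. The point is to avoid a general transitivity statement for $\prec$, which is false in general. Instead we exploit that the ``middle'' algebra $L(\Sigma)$ is a group algebra and that $\Delta$ is diagonal on it.

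\emph{Step 1 (uniform bound from $L(\Sigma)\prec_M Q$).} Apply Theorem~\ref{corner} with $\mathcal U=\{u_g\}_{g\in\Sigma}$. The failure of (2) yields finitely many $x_1,\dots,x_k,y_1,\dots,y_k\in M$ and $\delta>0$ such that
\[
\sum_{i,j}\|E_Q(x_iu_gy_j)\|_2^2\ge\delta\quad\text{for all } g\in\Sigma.
\]
This is the standard ``finite set'' reformulation: if no such finite family existed, a diagonal argument over a dense sequence in $M$ would produce a sequence $g_n\in\Sigma$ witnessing (2).

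Now take $w=\sum_{g\in\Sigma}c_gu_g\in L(\Sigma)$. Then
\[
E_{M\bar\otimes Q}\bigl((1\otimes x_i)\Delta(w)(1\otimes y_j)\bigr)=\sum_g c_g\,u_g\otimes E_Q(x_iu_gy_j).
\]
Since the $u_g\otimes(\cdot)$ are mutually orthogonal in the first leg, this gives
\[
\sum_{i,j}\|E_{M\bar\otimes Q}((1\otimes x_i)\Delta(w)(1\otimes y_j))\|_2^2\ \ge\ \delta\|w\|_2^2 \qquad\text{for every } w\in L(\Sigma).
\]

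\emph{Step 2 (transport the intertwiner of $P$).} From $P\prec_M L(\Sigma)$, Theorem~\ref{corner}(1) gives projections $p_0\in P$ and $r\in L(\Sigma)$, a $*$-homomorphism $\theta:p_0Pp_0\to rL(\Sigma)r$ and a nonzero partial isometry $v$ with $\theta(x)v=vx$. Applying $\Delta$ gives
\[
\Delta(\theta(x))\Delta(v)=\Delta(v)\Delta(x),
\]
with $\Delta(v)\neq0$ a partial isometry. Suppose, towards a contradiction, that $\Delta(P)\nprec M\bar\otimes Q$. Then there are unitaries $u_n\in\mathcal U(p_0Pp_0)$ such that
\[
\|E_{M\bar\otimes Q}(a\Delta(u_n)b)\|_2\to0\quad\text{for all } a,b\in M\bar\otimes M.
\]
Indeed, passing to the corner $p_0Pp_0$ is harmless, since $\prec$ is insensitive to cutting by projections of $P$.

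We choose $a=(1\otimes x_i)\Delta(v)$ and $b=\Delta(v)^*(1\otimes y_j)$. Then
\[
a\Delta(u_n)b=(1\otimes x_i)\Delta(\theta(u_n))\Delta(vv^*)(1\otimes y_j).
\]
We want to compare this with $(1\otimes x_i)\Delta(\theta(u_n)e)(1\otimes y_j)$ for a suitable projection $e$ to which Step 1 applies, giving a lower bound $\delta\|\theta(u_n)e\|_2^2=\delta\tau(e)>0$. That would contradict the convergence to $0$.

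\emph{Main obstacle.} The obstacle is precisely the factor $vv^*$. It lies in $\theta(p_0Pp_0)'\cap rMr$ but not necessarily in $L(\Sigma)$, so Step 1 does not apply to $\theta(u_n)vv^*$ directly.

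My first attempt is to replace $vv^*$ by its ``Fourier'' decomposition relative to $L(\Sigma)$. Write $vv^*=\sum_{s}E_{L(\Sigma)}(vv^*u_s^*)u_s$ over coset representatives $s$ of $\Sigma\backslash H$. Then approximate $vv^*$ in $\|\cdot\|_2$ by a finite sum. This reduces the required lower bound to finitely many terms of the form $(1\otimes x_i)\Delta(\theta(u_n)w_s)\Delta(u_s)(1\otimes y_j)$ with $w_s\in L(\Sigma)$. The $\Delta(u_s)=u_s\otimes u_s$ can be absorbed into a new finite family $y_j'=u_sy_j$, after enlarging the finite set of Step 1.

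The remaining issue is to ensure that the total contribution is bounded below, rather than cancelling across cosets. For this I would use the orthogonality of the $u_s\otimes\cdot$ pieces in the first tensor leg for distinct cosets $\Sigma s$. Alternatively, if this bookkeeping becomes awkward, I would use the standard trick of replacing $v$ by one with $E_{L(\Sigma)}(v^*v)$ and $vv^*$ controlled: one may assume $vv^*\in\theta(p_0Pp_0)'\cap rMr$ has nonzero expectation onto $L(\Sigma)$, and apply Step 1 to $\theta(u_n)E_{L(\Sigma)}(vv^*)$, which has $\|\cdot\|_2$-norm bounded below uniformly in $n$.
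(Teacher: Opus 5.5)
Your direct route works and is genuinely different from the paper's. The ``main obstacle'' is not really an obstacle. The computation in Step 1 applies to any $\xi=\sum_{h\in H}\xi_h u_h\in M$, not only to $w\in L(\Sigma)$, if you replace $\|w\|_2^2$ by $\|E_{L(\Sigma)}(\xi)\|_2^2$. Orthogonality of the vectors $u_h\otimes(\cdot)$ over all $h\in H$ gives
\[
\sum_{i,j}\bigl\|E_{M\bar\otimes Q}\bigl((1\otimes x_i)\Delta(\xi)(1\otimes y_j)\bigr)\bigr\|_2^2=\sum_{h\in H}|\xi_h|^2\sum_{i,j}\|E_Q(x_iu_hy_j)\|_2^2\ \ge\ \delta\,\|E_{L(\Sigma)}(\xi)\|_2^2 ,
\]
because the terms with $h\notin\Sigma$ are merely nonnegative.

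Apply this to $\xi=vu_nv^*=\theta(u_n)vv^*$. Since $\theta(p_0)v=v$ and $\theta(p_0)\in L(\Sigma)$, the element $E_{L(\Sigma)}(\xi)=\theta(u_n)E_{L(\Sigma)}(vv^*)$ has $\|\cdot\|_2$-norm equal to $\|E_{L(\Sigma)}(vv^*)\|_2$. This is nonzero by faithfulness of $E_{L(\Sigma)}$, so nothing has to be ``assumed''. With your choice $a_i=(1\otimes x_i)\Delta(v)$, $b_j=\Delta(v)^*(1\otimes y_j)$, this contradicts $\|E_{M\bar\otimes Q}(a_i\Delta(u_n)b_j)\|_2\to0$.

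So your second fix is the right one, but it needs exactly this Pythagoras inequality to pass from $\theta(u_n)vv^*$ to its $L(\Sigma)$-part. Drop the first fix. The non-trivial cosets only have to contribute nonnegatively, not be bounded below. Moreover, the step ``enlarge the finite set by $y_j'=u_sy_j$'' is not justified as written: Step 1 for the family $\{x_i\},\{y_j\}$ gives no control on $\sum_{i,j}\|E_Q(x_iu_gu_sy_j)\|_2^2$.

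The paper does not argue directly; it uses the augmentation technique of \cite{CD-AD20}.
\begin{enumerate}
\item It embeds $M$ into $\mathcal M=D\rtimes H$ for a Bernoulli action and extends $\Delta$ to $\mathcal M\to\mathcal M\bar\otimes M$.
\item For infinite $\Sigma$, $(D\rtimes\Sigma)'\cap\mathcal M=\mathbb C1$. Via \cite[Remark 2.2]{DHI16} and \cite[Lemma 2.3]{Dr19b}, this upgrades the hypothesis to an intertwining of $\Delta(P)$ into every corner $(\mathcal M\bar\otimes L(\Sigma))z$ with $z$ in the relative commutant.
\item Under that strong form, the transitivity lemma \cite[Lemma 2.4(2)]{Dr19b} becomes available and composes with $L(\Sigma)\prec_M Q$.
\item Finally it descends from $\mathcal M\bar\otimes M$ to $M\bar\otimes M$.
\end{enumerate}
So the augmentation is a device to restore the transitivity of $\prec$ which, as you note, fails in general.

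You bypass transitivity by using that $\Delta$ is diagonal: the first tensor leg records the group element, so the part of $vv^*$ outside $L(\Sigma)$ can only add mass. Your argument is short, self-contained and quantitative. It uses neither the icc hypothesis on $H$ nor the reduction to infinite $\Sigma$. The paper's proof is a black-box application of general intertwining machinery already developed in \cite{CD-AD20,Dr19b}.
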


{\it Proof.}
Following the augmentation technique from \cite[Section 3]{CD-AD20}, we consider a Bernoulli action $H\car D$  with abelian base. Denote $\mathcal M=D\rtimes H$ and extend $\Delta$ to the $*$-homomorphism $\Delta:\mathcal M\to \mathcal M\bar\otimes M$  given by $\Delta(du_g)=du_g\otimes u_g$, for all $d\in D$ and $g\in H$. 

Without loss of generality we may assume that $\Sigma$ is infinite. It follows that $P\prec_{\mathcal M} (D\rtimes\Sigma )z$ for any non-zero projection $z\in (D\rtimes\Sigma)'\cap \mathcal M=\mathbb C 1$. By using \cite[Remark 2.2]{DHI16} and \cite[Lemma 2.3]{Dr19b} we further obtain that $\Delta(P)\prec_{\mathcal M \bar\otimes M} (\mathcal M\bar\otimes L(\Sigma))z$ for any non-zero projection $z\in (\mathcal M\bar\otimes L(\Sigma))'\cap \mathcal M \bar\otimes M$. Using $L(\Sigma)\prec_M Q$ and \cite[Lemma 2.4(2)]{Dr19b}, we get $\Delta(P)\prec_{\mathcal M \bar\otimes M}\mathcal M\bar\otimes Q$. Since $\Delta(P)\subset M\bar\otimes M$, it follows that $\Delta(P)\prec_{ M \bar\otimes M} M\bar\otimes Q$.
\hfill$\blacksquare$

\subsection{Relative amenability}
A tracial von Neumann algebra $(M,\tau)$ is called {\it amenable} if there exists a positive linear functional $\Phi \colon \mathbb{B}(L^2(M)) \to \mathbb{C}$ such that $\Phi_{|M} = \tau$ and $\Phi$ is $M$-{\it central}, that is, $\Phi(xT) = \Phi(Tx)$ for all $x \in M$ and $T \in \mathbb{B}(L^2(M))$. 

We next recall the notion of relative amenability introduced by Ozawa and Popa \cite{OP07}. Let $(M,\tau)$ be a tracial von Neumann algebra, let $p \in M$ be a projection, and let $P \subset pMp$ and $Q \subset M$ be von Neumann subalgebras. Following \cite[Definition~2.2]{OP07}, we say that $P$ is {\it amenable relative to $Q$ inside $M$} if there exists a positive linear functional $\Phi \colon p\langle M,e_Q\rangle p \to \mathbb{C}$ such that $\Phi_{|pMp} = \tau$ and $\Phi$ is $P$-central. We say that $P$ is {\it strongly non-amenable relative to $Q$} if $Pp'$ is non-amenable relative to $Q$ for every non-zero projection $p' \in P' \cap pMp$ (equivalently, for every non-zero projection $p' \in \mathcal{N}_M(P)' \cap pMp$ by \cite[Lemma~2.6]{DHI16}).

We continue with the following lemma in the spirit of \cite[Propostion 2.2]{KV15} that explores a connection between intertwining and relative amenability

\begin{lemma}\label{lemma.intertwining}
Let $(M,\tau)$ be a tracial von Neumann algebra. Let $P_0\subset P\subset pMp$ and $Q_0\subset Q\subset qMq$ be von Neumann subalgebras such that there exist projections $p'\in P$ with $p'\in P_0$ or $p'\in P_0'\cap P$, $q_0\in Q$, a non-zero partial isometry $v\in q_0Mp'$ and a $*$-homomorphism $\theta: p'Pp'\to q_0Qq_0$ satisfying $\theta(x)v=vx$ for $x\in p'Pp'$. Assume that the support of $E_{Q}(vv^*)$ equals $q_0$. 
Then the following hold:
\begin{enumerate}
    \item If $\theta(p'P_0p')\prec_{Q} Q_0$, then $P_0\prec_M Q_0$, 

    \item If $\theta(p'P_0p')$ is not strongly non-amenable relative to $Q_0$ inside $Q$, then $P_0$ is not strongly non-amenable relative to $Q_0$ inside $M$.
\end{enumerate}

\end{lemma}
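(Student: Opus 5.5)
The idea is to transport the problem from $P_0\subset pMp$ to $\theta(p'P_0p')\subset q_0Qq_0$ through the partial isometry $v$, in the spirit of \cite[Proposition 2.2]{KV15}. First note $v^*v\in (p'Pp')'\cap p'Mp'$ and $vv^*\in \theta(p'Pp')'\cap q_0Mq_0$. The hypothesis that the support of $E_Q(vv^*)$ equals $q_0$ is what lets us pass information back and forth between $Q$ and $M$ without losing mass. In both parts, the condition $p'\in P_0$ or $p'\in P_0'\cap P$ ensures that $p'P_0p'$ (respectively $P_0p'$) is a genuine corner of $P_0$. This lets us reduce from $P_0$ to that corner, using that intertwining and relative amenability are insensitive to cutting by projections in $P_0$ or in $P_0'$.

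For (1), assume $\theta(p'P_0p')\prec_Q Q_0$. Then there are projections $e\in\theta(p'P_0p')$ and $f\in Q_0$, a homomorphism $\psi:e\theta(p'P_0p')e\to fQ_0f$, and a nonzero $w\in fQe$ with $\psi(y)w=wy$. We want to compose: $\psi\circ\theta$ intertwines a corner of $P_0$ into $Q_0$ via $wv$. The risk is that $wv=0$. To avoid it, first replace $e$ by a projection of the form $\theta(e_0)$ with $e_0\in p'P_0p'$; this is possible after a routine adjustment, since $\theta$ is onto its image. Then use $E_Q(vv^*)$: if $w u v=0$ for all $u$ in a suitable unitary group of $\theta(p'P_0p')$, we get $w\,E_Q(\text{span of }uvv^*u^*)=0$. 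Averaging over unitaries of $\theta(p'P_0p')$ gives an element of $\theta(p'P_0p')'\cap q_0Qq_0$ majorizing $E_Q(vv^*)$ in support. Its support therefore contains $q_0$, which forces $w=0$, a contradiction. Concretely, I would instead run the sequence characterization of Theorem~\ref{corner}. If $P_0\not\prec_M Q_0$, choose unitaries $u_n\in\mathcal U(p'P_0p')$ with $\|E_{Q_0}(xu_ny)\|_2\to0$ for all $x,y\in M$. Then
\[
E_{Q_0}(a\theta(u_n)b)=E_{Q_0}\bigl(a\theta(u_n)vv^*b'\bigr)+\text{small}=E_{Q_0}(av u_n v^*b')+\text{small}.
\]
Here $b'$ approximates $b$ in the sense that $b\approx E_Q(vv^*)c$ for some $c\in Q$, which is possible on $q_0$ by the support condition together with a Kaplansky-type approximation. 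This gives $\|E_{Q_0}(a\theta(u_n)b)\|_2\to0$ for $a,b\in q_0Qq_0$, contradicting $\theta(p'P_0p')\prec_Q Q_0$. I expect this approximation step, which uses the support condition and the fact that $E_{Q_0}(x\,E_Q(y))=E_{Q_0}(E_Q(x)y)$ for $x\in Q$, to be the main technical point.

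For (2), assume there is a nonzero projection $z\in\theta(p'P_0p')'\cap q_0Qq_0$ such that $\theta(p'P_0p')z$ is amenable relative to $Q_0$ inside $Q$, witnessed by a $\theta(p'P_0p')z$-central state $\Phi$ on $z\langle Q,e_{Q_0}\rangle z$. Since $\langle Q,e_{Q_0}\rangle$ embeds in $\langle M,e_{Q_0}\rangle$ and there is a conditional expectation onto it (via $L^2(Q)\subset L^2(M)$), we may extend $\Phi$. Then define on $p''\langle M,e_{Q_0}\rangle p''$, with $p''$ the support of $v^*zv\in P_0'\cap p'Mp'$, the functional
\[
\Psi(T)=\Phi'(vTv^*).
\]
Centrality follows from $\theta(x)v=vx$. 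The functional $\Psi$ restricted to $p''Mp''$ is dominated by, and absolutely continuous with, a multiple of $\tau(v^*z\,\cdot\,zv)$, again by the support condition. By the standard criterion (\cite[Theorem 2.1]{OP07}, normality on $M$ up to equivalence), this yields that $P_0p''$ is amenable relative to $Q_0$, so $P_0$ is not strongly non-amenable relative to $Q_0$. The nonvanishing of $v^*zv$ is exactly where the support condition $s(E_Q(vv^*))=q_0$ is used, since $\tau(zvv^*)=\tau(zE_Q(vv^*))>0$.
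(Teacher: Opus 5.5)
Your proof is correct, but both parts take a different route from the paper.

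\textbf{Part (1).} The paper simply composes the two intertwiners. If $\psi$ and $w\in Q$ with $w=wq_0$ witness $\theta(p'P_0p')\prec_Q Q_0$, then $\psi\circ\theta$ and $wv$ witness $P_0\prec_M Q_0$. Moreover $wv\neq 0$, because $wv=0$ would give $wE_Q(vv^*)=E_Q(wvv^*)=0$, hence $wq_0=0$ and $w=0$. So the risk you identified is removed in one line, and your averaging detour is unnecessary.

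Your actual argument via Popa's sequential criterion is also valid, and it shows that the support hypothesis is really a density statement. Set $d=E_Q(vv^*)$, $f_\varepsilon=1_{[\varepsilon,1]}(d)$, $g_\varepsilon(t)=t^{-1}1_{[\varepsilon,1]}(t)$ and $c_\varepsilon=g_\varepsilon(d)b\in Q$. Then $f_\varepsilon b=dc_\varepsilon$, $f_\varepsilon\to q_0$ strongly, and $E_{Q_0}(a\theta(u_n)dc_\varepsilon)=E_{Q_0}(avu_nv^*c_\varepsilon)$. The error $\|a\|\,\|(q_0-f_\varepsilon)b\|_2$ is uniform in $n$. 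This step is functional calculus, not Kaplansky.

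\textbf{Part (2).} The paper uses \cite[Lemma 2.6]{DHI16} to pick the amenable projection $q'$ in the commutant of the normalizer, so that $q'vv^*$ is a projection. It then uses the support condition to see $q'vv^*\neq 0$, and conjugates $q'vP_0v^*q'$ back by a unitary extending $v$. You instead pull the central state back through $v$ and apply the normal-and-faithful criterion of \cite[Theorem 2.1]{OP07}. Your route needs no commutation between the amenable projection and $vv^*$, and it only uses $\tau(v^*zv)=\tau(zE_Q(vv^*))>0$.

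Two points need tidying.
\begin{enumerate}
\item Define $\Psi(T)=\Phi'(zvTv^*z)$ with $\Phi'=\Phi\circ(T\mapsto e_QTe_Q)$. Then $\Psi(y)=\tau(v^*zv\,y)$ exactly on $p''Mp''$. Your expression $\tau(v^*z\cdot zv)$ is not a functional on $p''Mp''$.
\item When $p'\in P_0$, the projection $p''$ lies only in $(p'P_0p')'\cap p'Mp'$. So what you obtain is amenability of $p'P_0p'p''$, and you must pass to $P_0z'$ for the projection $z'\in P_0'\cap pMp$ with $z'p'=p''$.
\end{enumerate}
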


{\it Proof.} (1) 
Since $\theta(p'P_0p')\prec_{Q} Q_0$, there exist projections $p_0\in p'P_0p',q_1\in Q_0$, a non-zero partial isometry $w\in q_1 Q \theta(p_0)$ and a $*$-homomorphism $\Psi: \theta(p_0 P p_0)\to q_1 Q_0 q_1$ such that $\Psi(x)w=wx$, for any $x\in p_0 P_0 p_0$.  Define $\tilde\Psi: p_0 P_0 p_0 \to q_0 Q_0 q_0$ by $\Tilde\Psi(x)=\Psi(\theta(x))$ for any $x\in p_0P_0p_0$ and $\tilde w=wv$.  Note that $\Tilde\Psi(x)\tilde w=\tilde w x$, for any $x\in p_0 P_0 p_0$ and  that $\tilde w\neq 0$.  Otherwise we get $wE_{Q}(vv^*)=0$, which implies that $w=0$, contradiction. By replacing $\tilde w$ by the partial isometry from its polar decomposition, we get that $P_0\prec_M Q_0$.

(2) By \cite[Lemma 2.6(2)]{DHI16} there exists a non-zero projection $q'\in   \mathcal N_{q_0 M q_0}(\theta(p'P_0 p'))'\cap q_0 M q_0$ such that $\theta(p'P_0p')q'$ is amenable relative to $Q_0$ inside $qMq$. Since $vv^*\in \theta(p'P_0 p')'\cap q_0Mq_0$ is a non-zero projection, then
$q'vv^* \in  \theta(p'P_0 p')'\cap q_0Mq_0$ is a non-zero projection as well. Indeed, if $q' vv^*=0$, then $q'E_{Q}(vv^*)=0$, which implies that $q'q_0=0$. This contradicts the fact that $q'\in q_0Qq_0$ is a non-zero projection. Next, note that
\cite[Lemma~2.6]{DHI16} implies that $\theta(p'P_0p')q'vv^*$ is amenable relative to $Q_0$ inside $qMq$, and hence, $q' vP_0v^*q' $ is amenable relative to $Q_0$ inside $qMq$. Let $u\in \mathcal U( M)$ be a unitary for which $v=uv^*v$. We then have that $  P_0v^* v u^*q'u $ is amenable relative to $Q_0$ inside $qMq$. This shows that $P_0$ is not strongly non-amenable relative to $Q_0$ inside $M$.
\hfill$\blacksquare$

\subsection{Quasinormalizers of groups and von Neumann algebras}\label{subsection.quasi.normalizer} 

\noindent Given a group inclusion $H<G$, the one-sided quasi-normalizer ${\rm QN}^{(1)}_G(H)$ is the semigroup of all $g\in G$ for which there exists a finite set $F\subset G$ such that $Hg\subset FH$ \cite[Section 5]{FGS10}; equivalently, $g\in {\rm QN}^{(1)}_G(H)$ if and only if $[H: gHg^{-1}\cap H]<\infty$. The quasi-normalizer ${\rm QN}_G(H)$ is the group of all $g\in G$ for which exists a finite set $F\subset G$ such that $Hg\subset FH$ and $gH\subset HF$.

\noindent Given an inclusion $N \subseteq M$ of tracial von Neumann algebras we define the quasi-normalizer  $\mathcal {QN}_{\mathcal M}(\mathcal N)$ as the set of all elements $x\in M$ for which there exist $x_1,...,x_n\in M$ such that $N x\subseteq \sum x_i N$ and $x N \subseteq \sum N x_i$ (see \cite[Definition 4.8]{Po99}). Also the one-sided quasi-normalizer $\mathcal {QN}^{(1)}_{\mathcal M}(\mathcal N)$ is defined as the set of all elements $x\in M$ for which there exist $x_1,...,x_n\in M$ such that $N x\subseteq \sum x_i N$ \cite{FGS10}.

\noindent We record now some formulas for the quasi-normalizer of corners.

\begin{lemma}[\!\!{\cite{Po03,FGS10}}]\label{lemma.qn.algebras}
Let $P\subset M$ be tracial von Neumann algebras. For any projection $p\in P$, the following hold:
\begin{itemize}
    \item $W^*({\rm \mathcal{QN}^{(1)}_{pM p}}(pP p))=pW^*({\rm \mathcal{QN}^{(1)}_{M}}(P))p$,
    
    \item $W^*({\rm \mathcal{QN}_{pM p}}(pP p))=pW^*({\rm \mathcal{QN}_{M}}(P))p$. 
\end{itemize}

Moreover, for any projection $p'\in P'\cap M$, the following hold:
\begin{itemize}
    \item $W^*({\rm \mathcal{QN}^{(1)}_{p'M p'}}(P p'))=p'W^*({\rm \mathcal{QN}^{(1)}_{M}}(P))p'$,

    \item $W^*({\rm \mathcal{QN}_{p'M p'}}(P p'))=p'W^*({\rm \mathcal{QN}_{M}}(P))p'$.
\end{itemize}
\end{lemma}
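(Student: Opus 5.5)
The plan is to prove the stated equalities at the level of the quasi-normalizing sets themselves wherever possible, and to pass to generated von Neumann algebras only at the end. Each time, the two-sided statement follows from the same computation as the one-sided one: I apply it both to the inclusion $Nx\subseteq\sum x_iN$ and to the mirror inclusion $xN\subseteq\sum Nx_i$.

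\emph{The case $p'\in P'\cap M$.} This part is purely algebraic, so I would do it first.

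\begin{enumerate}
\item[(a)] Let $x\in\mathcal{QN}^{(1)}_M(P)$, so that $Px\subseteq\sum_i x_iP$. Since $p'$ commutes with $P$, we get $(Pp')(p'xp')=p'Pxp'\subseteq\sum_i (p'x_ip')(Pp')$. Hence $p'xp'\in\mathcal{QN}^{(1)}_{p'Mp'}(Pp')$.
\item[(b)] Conversely, let $y\in\mathcal{QN}^{(1)}_{p'Mp'}(Pp')$. Then $Py=Pp'y\subseteq\sum_i y_iPp'=\sum_i y_ip'P\subseteq\sum_i (y_ip')P$. So $y\in\mathcal{QN}^{(1)}_M(P)$, and moreover $y=p'yp'$.
\end{enumerate}

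Together these give $p'\,\mathcal{QN}^{(1)}_M(P)\,p'=\mathcal{QN}^{(1)}_{p'Mp'}(Pp')$. Taking generated von Neumann algebras, and using that $x\mapsto p'xp'$ is normal and maps $W^*(S)$ into $W^*(p'Sp')$ whenever $S$ is a self-adjoint multiplicatively closed set containing $1$, yields the last two bullets.

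\emph{The case $p\in P$: reduction to full central support.} Let $z\in\mathcal Z(P)$ be the central support of $p$ in $P$. First I show $\mathcal{QN}_{zMz}(Pz)\subseteq\mathcal{QN}_M(P)$. Indeed, if $y\in\mathcal{QN}_{zMz}(Pz)$, then $Py=Pzy\subseteq\sum_i y_iPz\subseteq\sum_i y_iP$, and symmetrically on the other side. Conversely, $z\mathcal{QN}_M(P)z\subseteq\mathcal{QN}_{zMz}(Pz)$ by the previous paragraph, since $z\in P'\cap M$. Hence
\[
pW^*(\mathcal{QN}_M(P))p=pzW^*(\mathcal{QN}_M(P))zp=pW^*(\mathcal{QN}_{zMz}(Pz))p .
\]
This reduces the problem to the case where $p$ has central support $1$ in $P$.

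\emph{The case $p\in P$ with central support $1$.} Here I choose partial isometries $(u_j)_{j\ge1}\subset P$ with $u_j^*u_j\le p$ and $\sum_j u_ju_j^*=1$.

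\begin{enumerate}
\item[(i)] \emph{Inclusion $\subseteq$.} Take $y\in\mathcal{QN}_{pMp}(pPp)$. Then $Py=Pp\,y$, and I need to control $Pp$ by finitely many terms. Since $y=py$, we have $Py=\sum_j u_j(u_j^*Pp)y\subseteq\sum_j u_j\,pPp\,y$, which is an infinite sum. I would avoid this by checking quasi-normalization only up to the truncations $e_k=\sum_{j\le k}u_ju_j^*$, and then noting that $y$ is a $\|\cdot\|_2$-limit of $e_ky$. The cleaner route is to observe that $p\in P$ already gives $Py\subseteq (Pp)y$, and that $pPp\,y\subseteq\sum_i y_i\,pPp$. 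From this one gets $u_j^*Py\subseteq\sum_i y_i\,pPp$ for every $j$, hence $e_kPy\subseteq\sum_{j\le k}\sum_i u_jy_iP$. Letting $k\to\infty$ shows that $y$ lies in the von Neumann algebra generated by the elements $u_jy\,u_l^*$, and each $u_jyu_l^*$ is a genuine quasi-normalizer of $P$.
\item[(ii)] \emph{Inclusion $\supseteq$.} Take $x\in\mathcal{QN}_M(P)$ with $Px\subseteq\sum_i x_iP$. The elements $u_j^*xu_l\in pMp$ satisfy $pPp\,u_j^*xu_l\subseteq u_j^*Pxu_l\subseteq\sum_i u_j^*x_iPu_l$. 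Since $Pu_l=\sum_m u_m(u_m^*Pu_l)\subseteq\sum_m u_m\,pPp$, these elements are quasi-normalizers of $pPp$, again up to the same finite-truncation argument. Moreover $pxp=\sum_{j,l}u_ju_j^*\,p\,x\,p\,u_lu_l^*$ is a $\|\cdot\|_2$-limit of finite combinations of them, multiplied on the outside by elements of $pPp$.
\end{enumerate}

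The main obstacle is this truncation step in both (i) and (ii). There are possibly infinitely many $u_j$, while quasi-normalization demands finitely many $x_i$. I would handle it by proving the equality only for the generated von Neumann algebras, which is all the lemma asserts, and using $\|\cdot\|_2$-density of the truncations together with the fact that both sides are von Neumann algebras. Alternatively, one can first cut by a projection $e_k$ with $\tau(1-e_k)$ small, for which finitely many $u_j$ suffice, and pass to the limit.
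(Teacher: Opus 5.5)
Your set-level identity $p'\mathcal{QN}^{(1)}_M(P)p'=\mathcal{QN}^{(1)}_{p'Mp'}(Pp')$ from (a)--(b) is correct. For the two-sided quasi-normalizer the passage to generated algebras is also fine, because $\mathcal{QN}_M(P)$ is a unital $*$-algebra containing $P'\cap M\ni p'$.

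The gap is in the one-sided bullets. $S=\mathcal{QN}^{(1)}_M(P)$ is an algebra but is not self-adjoint, so the compression principle you quote does not apply. $W^*(S)$ contains mixed products $p'x(1-p')y^*p'$ with $x,y\in S$, and nothing in your argument places these in $W^*(p'Sp')$. The same defect enters your treatment of $p\in P$: the one-sided version of your reduction to central support $1$ uses exactly this step with $p'=z$, and in (ii) you only handle $pxp$ with $x\in S$, never words in $S\cup S^*$.

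This step cannot be repaired, because the one-sided bullets are false in this generality. Take the following data:
\begin{itemize}
\item an infinite malnormal subgroup $H$ of an icc group $G$ (e.g.\ $\langle a\rangle<\mathbb F_2=\langle a,b\rangle$);
\item $M=\mathbb M_2(\mathbb C)\otimes L(G)$ and $P=\{e_{11}\otimes a+e_{22}\otimes b\mid a\in L(H),\,b\in L(G)\}$;
\item $p=e_{11}\otimes 1\in\mathcal Z(P)$, so $p$ lies both in $P$ and in $P'\cap M$.
\end{itemize}
For every $c\in L(G)$ we have $P(e_{12}\otimes c)=e_{12}\otimes L(H)c\subset(e_{12}\otimes 1)P$. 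Hence $e_{12}\otimes c\in\mathcal{QN}^{(1)}_M(P)$, and so $e_{11}\otimes c=(e_{12}\otimes c)(e_{12}\otimes 1)^*\in W^*(\mathcal{QN}^{(1)}_M(P))$. Thus $pW^*(\mathcal{QN}^{(1)}_M(P))p=pMp\cong L(G)$. On the other hand, ${\rm QN}^{(1)}_G(H)=H$, so Lemma \ref{lemma.qn.groups} gives $W^*(\mathcal{QN}^{(1)}_{pMp}(pPp))\cong W^*(\mathcal{QN}^{(1)}_{L(G)}(L(H)))=L(H)$. So the first and third bullets need an extra hypothesis; the paper only cites them, without proof.

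Your method does prove the following:
\begin{itemize}
\item the inclusions ``left side $\subseteq$ right side'' in all four bullets;
\item both two-sided bullets;
\item the one-sided bullets when $p$ has central support $1$ in $P$ (resp.\ $p'$ has central support $1$ in $P'\cap M$), e.g.\ when $P$ (resp.\ $P'\cap M$) is a factor.
\end{itemize}
For the last item, choose partial isometries $v_j$ in $P$ (resp.\ $P'\cap M$) with $v_j^*v_j\le p$ and $\sum_jv_jv_j^*=1$. Insert $1=\sum_jv_jv_j^*$ between consecutive letters of a word in $S\cup S^*$. Since $v_j\in S\cap S^*$ and $S$ is an algebra, each factor $v_j^*wv_k$ lies in $pSp\cup(pSp)^*$.

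Your truncation device also needs repair. $e_k=\sum_{j\le k}u_ju_j^*$ is not central in $P$, so bounding $e_kPy$ does not exhibit any element as a quasi-normalizer. Also, the inclusion $pPp\,u_j^*xu_l\subset u_j^*Pxu_l$ in (ii) is false in general. Instead, cut by projections $z_k\in\mathcal Z(P)$ with $\sum_kz_k=1$ such that $z_k=\sum_{j\le n_k}w_jw_j^*$ for finitely many $w_j\in Pz_k$ with $w_j^*w_j\le p$. Then:
\begin{itemize}
\item $z_myz_k\in\mathcal{QN}_M(P)$ for $y\in\mathcal{QN}_{pMp}(pPp)$;
\item $pz_mxz_kp\in\mathcal{QN}_{pMp}(pPp)$ for $x\in\mathcal{QN}_M(P)$;
\item the double sums converge strongly to $y$ and $pxp$, and in the one-sided case only the cut on one side is needed.
\end{itemize}
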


\noindent The following result provides a relation between the group theoretical quasi-normalizer and the von Neumann algebraic one.

    \begin{lemma}[\!\!{\cite[Corollary 5.2]{FGS10}}]\label{lemma.qn.groups}
Let $H<G$ be countable groups. Then the following hold:
\begin{enumerate}
    \item $W^*(\mathscr{QN}^{(1)}_{L(G)}(L(H)))=L(K)$,  where $K<G$ is the subgroup generated by ${\rm QN}^{(1)}_G(H)$. In particular, if ${\rm QN}^{(1)}_G(H)=H$, then $\mathscr{QN}^{(1)}_{L(G)}(L(H))=L(H)$.
    
    \item $W^*(\mathscr{QN}_{L(G)}(L(H)))=L({\rm QN}_G(H))$.
\end{enumerate}

\end{lemma}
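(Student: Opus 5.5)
Part (2) is a direct consequence of the Fourier-coefficient description of $L(G)$; I would prove it first and then deduce (1) from it.

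\emph{Proof of (2).} Write each $x\in L(G)$ as $x=\sum_g x_g u_g$ with $x_g=\tau(xu_g^*)$. For $g\in{\rm QN}_G(H)$ there are finite sets $F,F'$ with $Hg\subset FH$ and $gH\subset HF'$. Then $u_g L(H)\subset \sum_{f\in F'} L(H)u_f$ and $L(H)u_g\subset\sum_{f\in F}u_fL(H)$, which can be checked on the $\|\cdot\|_2$-dense span of $\{u_h\}$. So $u_g$ lies in $\mathscr{QN}_{L(G)}(L(H))$, and hence $L({\rm QN}_G(H))\subset W^*(\mathscr{QN}_{L(G)}(L(H)))$.

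For the converse, take $x$ with $L(H)x\subset\sum_{i=1}^n x_iL(H)$ and $xL(H)\subset\sum_{i=1}^n L(H)x_i$. The goal is that $x$ is supported on ${\rm QN}_G(H)$. Let $\mathcal K\subset L^2(L(G))$ be the $L(H)$-$L(H)$-bimodule generated by $x$. It is finitely generated as a right module and as a left module, so it has finite right and left $L(H)$-dimension.

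Now decompose $L^2(L(G))=\bigoplus_{HgH}\ell^2(HgH)$ as $L(H)$-bimodules. The projection onto each double-coset summand is a bimodular map, so for $g$ in the support of $x$ the projection of $\mathcal K$ onto $\ell^2(HgH)$ is a nonzero bimodule of finite left and right dimension. Next I would compute dimensions. Identifying $\ell^2(HgH)$ with $\ell^2(H)\otimes\ell^2(H/(H\cap g^{-1}Hg))$, its right $L(H)$-dimension is $[H:H\cap gHg^{-1}]$ and its left dimension is $[H:H\cap g^{-1}Hg]$. The bimodule $\ell^2(HgH)$ is irreducible in the sense that it is induced from $\mathbb C$ for the commensurator-type subgroup. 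A nonzero sub-bimodule of finite left and right dimension forces both indices to be finite. This is the standard Mackey-type fact: a $*$-representation $\ell^2(H/K)$ of $H$ has a nonzero finite-dimensional invariant subspace only if $H/K$ is finite.

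Hence $g\in{\rm QN}_G(H)$, so $x\in L({\rm QN}_G(H))$. This gives $W^*(\mathscr{QN}_{L(G)}(L(H)))\subset L({\rm QN}_G(H))$, proving (2).

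\emph{Proof of (1).} The same argument applies using only the one-sided condition $L(H)x\subset\sum x_iL(H)$, i.e.\ finite right dimension. It gives $[H:H\cap g Hg^{-1}]<\infty$ (with the appropriate orientation of $g$ versus $g^{-1}$), so $g\in{\rm QN}^{(1)}_G(H)$. Thus every one-sided quasi-normalizing element is supported on ${\rm QN}^{(1)}_G(H)$, and its generated algebra lies in $L(K)$. Conversely, each $u_g$ with $g\in{\rm QN}^{(1)}_G(H)$ is in $\mathscr{QN}^{(1)}$, so $L(K)\subset W^*(\mathscr{QN}^{(1)}_{L(G)}(L(H)))$.

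For the "in particular": if ${\rm QN}^{(1)}_G(H)=H$, then every element of $\mathscr{QN}^{(1)}_{L(G)}(L(H))$ is supported on $H$, so it lies in $L(H)$.

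The main obstacle is the finite-dimension step, i.e.\ translating "finitely generated as a one-sided Hilbert module" into finiteness of the index of $H\cap gHg^{-1}$ on a single double coset. I would handle it by compressing with $e_{L(H)}$-type projections in the basic construction. A right-finite sub-bimodule yields a nonzero finite-trace projection in $L(H)'\cap\langle L(G),e_{L(H)}\rangle$ supported under $\ell^2(HgH)$. Computing this relative commutant on $\ell^2(HgH)$ identifies it with $\ell^\infty$-type functions on $H$-orbits of $HgH/H$, and a finite-trace projection exists there only when the relevant orbit is finite.
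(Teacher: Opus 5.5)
The paper gives no proof of this lemma. It is quoted from \cite[Corollary 5.2]{FGS10}, where, as far as I recall, the group case comes out of the authors' characterization of the relative weak asymptotic homomorphism property, i.e.\ an asymptotic argument with unitaries $u_h$, $h\in H$, plus a coset-combinatorics statement. Your route is different and self-contained: you use Fourier supports and bound the right and left $L(H)$-dimensions of $\mathcal K=\overline{L(H)xL(H)}$ and of its images in the double-coset summands $\ell^2(HgH)$.

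The architecture is sound. The easy inclusions are correct. So are the bound of $n$ on the right dimension of $\mathcal K$, the bimodularity of the projection $P_{HgH}$ onto $\ell^2(HgH)$, and the count $|HgH/H|=[H:H\cap gHg^{-1}]$. The deductions of (1) and of the ``in particular'' clause also go through. In fact you get slightly more than stated: every element of $\mathscr{QN}^{(1)}_{L(G)}(L(H))$ is supported on the semigroup ${\rm QN}^{(1)}_G(H)$ itself. Part (2) then follows by applying this to $x$ and $x^*$, since ${\rm QN}_G(H)=\{g: g,g^{-1}\in{\rm QN}^{(1)}_G(H)\}$.

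The decisive finiteness step, however, rests on claims that are false as written.
\begin{enumerate}
\item $\ell^2(HgH)$ is not an irreducible $L(H)$-bimodule.
\item With $e=e_{L(H)}$, $L(H)'\cap P_{HgH}\langle L(G),e\rangle P_{HgH}$ is not an algebra of functions on $H$-orbits of $HgH/H$. Already for $g=e$, $HgH/H$ is a single point while this relative commutant is $\mathcal Z(L(H))$, which is large when, for instance, $H$ is abelian.
\item The Mackey-type fact about finite-dimensional invariant subspaces of $\ell^2(H/K)$ is true, but a sub-bimodule of finite right dimension never produces such a subspace.
\end{enumerate}

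The repair uses only the trace $\mathrm{Tr}$ of the basic construction.
\begin{itemize}
\item For $c=fH$, let $1_c=u_feu_f^*$ be the projection onto $\ell^2(fH)$. Then $\sum_c 1_c=1$ and $u_h1_cu_h^*=1_{hc}$ for $h\in H$.
\item Let $Q$ be the range projection of $P_{HgH}P_{\mathcal K}$. It lies in $L(H)'\cap\langle L(G),e\rangle$ and satisfies $Q\leq P_{HgH}$. It is nonzero when $x_g\neq 0$, because $\hat x\in\mathcal K$. By polar decomposition $\mathrm{Tr}(Q)\leq \mathrm{Tr}(P_{\mathcal K})\leq n$.
\item $\mathrm{Tr}(Q)=\sum_c \mathrm{Tr}(1_cQ1_c)$, and the terms vanish for $c\notin HgH/H$.
\item The function $c\mapsto \mathrm{Tr}(1_cQ1_c)$ is $H$-invariant, because $Q$ commutes with every $u_h$.
\item Since $H$ acts transitively on $HgH/H$, you get $\mathrm{Tr}(Q)=|HgH/H|\cdot \mathrm{Tr}(1_{gH}Q1_{gH})$. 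The last factor is positive because $Q\neq 0$, so $[H:H\cap gHg^{-1}]<\infty$.
\end{itemize}

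With this in place of the irreducibility and relative-commutant claims, your proof is complete. Compared with the route in \cite{FGS10}, it needs no asymptotic machinery. On the other hand, it does not yield the general theory for arbitrary inclusions from which the lemma is derived there.
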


The following lemma essentially contained in the proof of \cite[Theorem 5.1]{IPP05}. We add details for the convenience of the reader.

\begin{lemma}[\!\!\cite{IPP05}]\label{lemma.normalizer}
Let $G=G_1*_\Sigma G_2$ be an amalgamated free product group and let $M=L(G)$. Let $P\subset pL(G)p$ be a von Neumann subalgebra such that $P\prec_{L(G)} L(G_1)$ and $P\nprec_{L(G)} L(\Sigma)$.  Denote $Q=\mathcal {QN}_{L(G)}^{(1)}(P)''$.

Then $Q\prec_{L(G)} L(G_1)$. Moreover, if $G_1$ is icc and  $Qz\prec^s_{L(G)} L(G_1)$ for some non-zero projection $z\in \mathcal Z(Q)$, then there is a unitary $u\in \mathcal U(L(G))$ such that $u Qz u^*\subset L(G_1)$.
\end{lemma}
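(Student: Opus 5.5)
We argue as in the proof of \cite[Theorem 5.1]{IPP05}: the key input is the control of quasi-normalizers in amalgamated free products, namely that for a subalgebra $B\subset L(G_1)$ with $B\nprec_{L(G_1)} L(\Sigma)$, every $x\in L(G)$ with $Bx\subset\sum_{i=1}^n x_iL(G_1)$ must lie in $L(G_1)$ \cite[Theorem 1.1]{IPP05}. This holds for one-sided quasi-normalizers because the proof only uses an inclusion $Bx\subset \sum x_i L(G_1)$.

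\emph{First assertion.} Since $P\prec_{L(G)}L(G_1)$, Theorem \ref{corner} gives projections $p_0\in P$, $q_0\in L(G_1)$, a $*$-homomorphism $\theta:p_0Pp_0\to q_0L(G_1)q_0$ and a non-zero partial isometry $v\in q_0L(G)p_0$ with $\theta(x)v=vx$ for $x\in p_0Pp_0$.
\begin{itemize}
\item We first check that $\theta(p_0Pp_0)\nprec_{L(G_1)}L(\Sigma)$. Otherwise, composing the two intertwinings as in Lemma \ref{lemma.intertwining}(1), and after replacing $q_0$ by the support of $E_{L(G_1)}(vv^*)$ if needed, we would get $P\prec_{L(G)}L(\Sigma)$, a contradiction.
\item Put $B=\theta(p_0Pp_0)$. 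Then $vv^*\in B'\cap q_0L(G)q_0$, so by the IPP control $vv^*\in L(G_1)$.
\item Next take $y\in\mathcal{QN}^{(1)}_{p_0Lp_0}(p_0Pp_0)$, so $p_0Pp_0\,y\subset\sum_j y_jp_0Pp_0$. Then
\[
Bvyv^*=v\,p_0Pp_0\,yv^*\subset \sum_j vy_j\,p_0Pp_0\,v^*\subset \sum_j vy_jv^*\,B .
\]
Hence $vyv^*$ one-sided quasi-normalizes $B$ inside $q_0L(G)q_0$. To match the IPP statement (which wants $B$ on the left and $L(G_1)$ on the right) we multiply on the right by $L(G_1)$, which gives $Bvyv^*\subset \sum_j (vy_jv^*) L(G_1)$.
\item Therefore $vyv^*\in L(G_1)$, so $v\,\mathcal{QN}^{(1)}(p_0Pp_0)''\,v^*\subset L(G_1)$.
\end{itemize}
By Lemma \ref{lemma.qn.algebras} this algebra equals $p_0Qp_0$. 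The map $x\mapsto vxv^*$ is then a $*$-homomorphism from $p_0Qp_0$ into $vv^*L(G_1)vv^*$ (modulo cutting by the central support of $v^*v$ in $p_0Qp_0$), with $\theta'(x)v=vx$. Hence $Q\prec_{L(G)}L(G_1)$.

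\emph{Second assertion.} Assume $Qz\prec^s L(G_1)$. We want a single unitary conjugating all of $Qz$ into $L(G_1)$. Repeat the above argument for every corner:
\begin{itemize}
\item Strongness gives, for every non-zero projection $z'\le z$ in $Q$, a partial isometry $v$ with $v\,z'Qz'\,v^*\subset L(G_1)$ as above. Here we use that $P\nprec L(\Sigma)$ persists in all corners of $z'Qz'$, which holds because $P\subset Q$.
\item A maximality (Zorn) argument yields partial isometries $v_k$ with orthogonal left supports $v_k^*v_k=z_k$, $\sum_kz_k=z$, and $v_kQv_k^*\subset L(G_1)$. We arrange the $z_k$ to be equivalent in $Q$ to projections in a fixed corner, using a partial isometry in $Q$ to move each piece.
\item Since $G_1$ is icc, $L(G_1)$ is a II$_1$ factor. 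So we can choose the right supports $v_kv_k^*$ in $L(G_1)$ mutually orthogonal: conjugate each by a unitary of $L(G_1)$, which keeps the images inside $L(G_1)$.
\item Then $V=\sum_kv_k$ is a partial isometry with $V^*V=z$ and $VV^*\in L(G_1)$, and $VQzV^*\subset L(G_1)$. Extend $V$ to a unitary $u\in L(G)$; this is possible since $L(G)$ is finite and the traces of $z$ and $VV^*$ agree. Then $uQzu^*\subset L(G_1)$.
\end{itemize}

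The main obstacle is the bookkeeping in the second part. One must check that the images of the $v_k$ can be made orthogonal inside $L(G_1)$ without leaving $L(G_1)$; the factoriality of $L(G_1)$ from icc-ness is exactly what allows this. One must also check that the IPP control, which requires $B\nprec_{L(G_1)}L(\Sigma)$, applies to each corner.
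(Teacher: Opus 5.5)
Your overall route is the paper's: intertwine, use the IPP control \cite[Corollary 2.10]{CI17} to get $vv^*\in L(G_1)$ and to push the one-sided quasi-normalizer into $L(G_1)$, then glue using factoriality of $L(G_1)$. Your computation $Bvyv^*\subset\sum_j vy_jv^*B$ is also correct. The gap is in passing from that computation to the von Neumann algebra. The map $y\mapsto vyv^*$ is not multiplicative on $p_0Mp_0$: one has $vy_1v^*\,vy_2v^*=vy_1(v^*v)y_2v^*$, which differs from $vy_1y_2v^*$ in general. Moreover, $\mathcal{QN}^{(1)}_{p_0Mp_0}(p_0Pp_0)$ is in general not closed under adjoints, so products such as $y_1^*y_2$ are not covered by your computation. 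Knowing $vyv^*\in L(G_1)$ on the quasi-normalizer therefore does not give it on the generated von Neumann algebra. For the same reason your next claim fails: $x\mapsto vxv^*$ is not a $*$-homomorphism on $p_0Qp_0$. Cutting by the central support of $v^*v$ does not help, because $v^*v$ need not be central in $p_0Qp_0$.

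The missing observation is that $e:=v^*v$ lies in $(p_0Pp_0)'\cap p_0Mp_0\subset p_0Qp_0$, so you should compress to $eMe$. There $\mathrm{Ad}(v)$ is a $*$-isomorphism onto $vv^*Mvv^*$ carrying $p_0Pp_0e$ onto $Bvv^*$. Your computation, run for $y\in\mathcal{QN}^{(1)}_{eMe}(p_0Pp_0e)$, gives $vyv^*\in L(G_1)$. Lemma \ref{lemma.qn.algebras} with $p'=e$ identifies the generated algebra as $eQe$. Hence $v(eQe)v^*\subset L(G_1)$, and $\mathrm{Ad}(v)|_{eQe}$ together with the projection $e\in Q$ witnesses $Q\prec_{L(G)}L(G_1)$. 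This is exactly the paper's argument, written there with a unitary $u$ extending $v$.

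The second part needs the same repair and one more. The initial projections of your $v_k$ must lie in $Q$. They do, since $Q'\cap pMp\subset P'\cap pMp\subset Q$ gives $(p_0Qp_0)'\cap p_0Mp_0=\mathcal Z(Q)p_0$ for $p_0\in Q$. However, with non-central pieces $z_k$, the cross terms $v_kxv_l^*$ ($k\neq l$) in $VxV^*$ are not controlled by $v_kQv_k^*\subset L(G_1)$. To control them you would have to route all pieces with a common central support through one corner via partial isometries of $Q$. The paper sidesteps this by running the maximality over central projections $z_1\in\mathcal Z(Q)$, so that $\tilde u x\tilde u^*=u_1xz_1u_1^*+u_2xz_2u_2^*$ has no cross terms.
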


{\it Proof.}
Since $P\prec_{L(G)} L(G_1)$, there exist projections $p_0\in P, q_0\in L(G_1)$, a non-zero partial isometry $v\in q_0 L(G) p_0$ and a $*$-isomorphism $\theta: p_0 P p_0 \to \mathcal C\subset q_0L(G_1)q_0$ such that $\theta(x)v=vx$, for any $x\in p_0 P p_0$. Thus, $\mathcal C vv^* = vp_0Pp_0v^*$. From assumption and Lemma \ref{lemma.intertwining} we get $\mathcal C\nprec_{L(G_1)} L(\Sigma)$, and hence by \cite[Corollary 2.10]{CI17} (see also \cite[Theorem 1.1]{IPP05}) it follows that $vv^*\in L(G_1)$. By letting $u\in L(G)$ be a unitary extending $v$, we deduce that $up_0Pp_0v^*vu^*\subset L(G_1)$. By assumption $up_0Pp_0v^*vu^*\nprec_{L(G_1)}L(\Sigma)$, and thus by \cite[Corollary 2.10]{CI17} and Lemma \ref{lemma.qn.algebras}, it follows that 
$$
u v^*v \mathcal{QN}^{(1)}_{L(G)}(P)'' v^*v u^* \subset L(G_1).
$$
This implies that $Q\prec_M L(G_1)$. 

For the moreover part, note that $L(G_1)$ is a II$_1$ factor. Thus, there exists a maximal central projection $z_1\in \mathcal Z(Q)$ with $z_1\leq z$ for which there is a unitary $u_1\in \mathcal U(L(G))$ such that $u_1 Qz_1 u_1^*\subset L(G_1)$. Our goal is to prove that $z_1=z$. 

Assume by contradiction that $z_0=z-z_1$ is a non-zero projection. Since $Qz_0\prec_{L(G)} L(G_1)$, by proceeding as in the first part of the proof, we get a unitary $u_2\in \mathcal U(L(G_1))$ a non-zero central projection $z_2\in \mathcal Z(Q)$ with $z_2\leq z_0$ such that $u_2 Qz_2 u_2^*\subset L(G_1)$. 
Define $\tilde z=z_1+z_2$.
Since $L(G_1)$ is a II$_1$ factor, we modify $u_2$ so that we may assume that $u_1z_1u_1^*$ and $u_2z_2u_2^*$ are orthogonal.
Define the partial isometry $\tilde u=u_1z_1+u_2z_2$ and note that $\tilde u^* \tilde u = z_1+z_2$ and $\tilde u Q \tilde z \tilde u^*\subset L(G_1)$. By extending $\tilde u$ to a unitary $u$, we get that $u Q\tilde z u^*\subset L(G_1)$. This contradicts the maximality of $z_1$, ending the proof.
\hfill$\blacksquare$

\subsection{Relatively solid groups}\label{section.rsg}

The main result of \cite{Oz03} (see also \cite{CS11}) shows that the von Neumann algebra $L(\Gamma)$ of a non-amenable hyperbolic group is {\it solid}, i.e. if $A\subset L(\Gamma)$ is a diffuse subalgebra, then the relative commutant $A'\cap L(\Gamma)$ is amenable. The work of \cite{PV12} (see \cite[Lemma 5.2]{KV15}) straightens this result, by showing that any non-amenable hyperbolic group is {\it relatively solid}.

\begin{definition}
A countable group $\Gamma$ is called {\it relatively solid} if for any trace preserving action $\Gamma\car N$ and any two commuting von Neumann subalgebras $Q_1,Q_1\subset q(N\rtimes\Gamma)q$, we have that $Q_1\prec_{N\rtimes\Gamma}N$ or $Q_2$ is amenable relative to $N$ inside $N\rtimes\Gamma$.   
\end{definition}

\begin{lemma}\label{lemma.rs}
Let $\Gamma$ be a relatively solid group and $\Gamma\car N_0$ an trace presering action. Let $N$ be a tracial von Neumann algebra for which $N_0\rtimes\Gamma\subset pNp$. 

Let $Q_1,Q_2\subset qNq$ be commuting von Neumann subalgebras such that $Q_1$ or $Q_2$ contains a II$_1$ subfactor. If $Q_1\vee Q_2\prec_{N}N_0\rtimes\Gamma$ and $Q_2$ is strongly non-amenable relative to $N_0$ inside $N$, then  $Q_1\prec_N N_0$.
\end{lemma}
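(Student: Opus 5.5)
We will reduce Lemma \ref{lemma.rs} to the definition of relative solidity. We do this by moving a corner of $Q_1\vee Q_2$ into $N_0\rtimes\Gamma$, applying the definition there, and transferring the conclusion back to $N$ with Lemma \ref{lemma.intertwining}.

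Since $Q_1\vee Q_2\prec_N N_0\rtimes\Gamma$, Theorem \ref{corner} provides projections $q_0\in Q_1\vee Q_2$, $r\in N_0\rtimes\Gamma$, a non-zero partial isometry $v\in rNq_0$ and a $*$-homomorphism $\theta:q_0(Q_1\vee Q_2)q_0\to r(N_0\rtimes\Gamma)r$ with $\theta(x)v=vx$. We need $q_0$ to split compatibly with the two algebras. Use the hypothesis that one of $Q_1,Q_2$ contains a II$_1$ subfactor; say it is $Q_1$, the other case being symmetric. Then, after shrinking $q_0$, we may take $q_0\in Q_1$, so that $q_0\in Q_2'\cap(Q_1\vee Q_2)$. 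The algebras $\theta(q_0Q_1q_0)$ and $\theta(Q_2q_0)$ then commute inside $r(N_0\rtimes\Gamma)r$. We also replace $r$ by the support of $E_{N_0\rtimes\Gamma}(vv^*)$ and restrict $\theta$ accordingly, so that the support condition of Lemma \ref{lemma.intertwining} holds. This is a standard cut-down.

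Apply relative solidity of $\Gamma$ to the action $\Gamma\car N_0$ and the commuting pair $\theta(q_0Q_1q_0)$, $\theta(Q_2q_0)$. It gives one of two outcomes:
\begin{itemize}
\item[(a)] $\theta(q_0Q_1q_0)\prec_{N_0\rtimes\Gamma}N_0$;
\item[(b)] $\theta(Q_2q_0)$ is amenable relative to $N_0$ inside $N_0\rtimes\Gamma$.
\end{itemize}
In case (a), Lemma \ref{lemma.intertwining}(1), with $P=q_0(Q_1\vee Q_2)q_0$, $P_0=q_0Q_1q_0$ and $p'=q_0\in P_0$, gives $q_0Q_1q_0\prec_N N_0$. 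This yields $Q_1\prec_NN_0$, as required. In case (b), $\theta(Q_2q_0)$ is in particular not strongly non-amenable relative to $N_0$ inside $N_0\rtimes\Gamma$. Lemma \ref{lemma.intertwining}(2), with $P_0=Q_2q_0$ and $p'=q_0\in P_0'\cap P$, then shows that $Q_2q_0$ is not strongly non-amenable relative to $N_0$ inside $N$. Since $q_0\in Q_2'\cap qNq$, this contradicts the assumption that $Q_2$ is strongly non-amenable relative to $N_0$. So only case (a) can occur.

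The main obstacle is the first step, together with the symmetric case where the II$_1$ subfactor lies in $Q_2$. There the intertwining projection must be chosen inside $Q_2$, so that $q_0\in Q_1'$, while strong non-amenability is still applied to $Q_2q_0$ with $q_0\in Q_2$ instead of in the commutant. I would first try to use the II$_1$ subfactor to move the projection into the factor via equivalence of projections and unitary conjugation. Failing that, I would cut by a central projection of $Q_1\vee Q_2$. Strong non-amenability should pass to corners $q_0Q_2q_0$ with $q_0\in Q_2$ because relative amenability is stable under amplification and reduction by projections in the algebra.
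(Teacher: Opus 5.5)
Your proof is correct and follows essentially the same route as the paper: intertwine, use the II$_1$ subfactor to place the projection $q_0$ in $Q_1$ (the paper cites \cite[Lemma 4.5]{CdSS17} for this step), apply relative solidity to the commuting images $\theta(q_0Q_1q_0)$ and $\theta(Q_2q_0)$, and transfer the outcome back with Lemma~\ref{lemma.intertwining}. Your explicit cut-down of $r$ to the support of $E_{N_0\rtimes\Gamma}(vv^*)$ supplies a detail the paper leaves implicit. The symmetric case you flag as the main obstacle is already handled by Lemma~\ref{lemma.intertwining}, which explicitly allows $p'\in P_0$ as well as $p'\in P_0'\cap P$.
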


{\it Proof.} Assume $Q_1$ contains a II$_1$ subfactor (we may proceed similarly for $Q_2$).
By assumption there are projections $p\in Q_1\vee Q_2$ and $r\in N_0\rtimes\Gamma$, a non-zero partial isometry $v\in rNp$ and a $*$-homomorphism $\theta: q_0(Q_1\vee Q_2)q_0\to r(N_0\rtimes\Gamma)r$ such that $\theta(x)v=vx$ for any $x\in q_0(Q_1\vee Q_2)q_0$. Since $Q_1$ contains  a II$_1$ subfactor, we may assume by \cite[Lemma 4.5]{CdSS17} that $q_0\in Q_1$. By Lemma \ref{lemma.intertwining}, $\theta(Q_2q_0)$ is non-amenable relative to $N_0$ inside $N_0\rtimes\Gamma$. Since $\Gamma_0$ is relatively solid, it follows that $\theta(q_0Q_1q_0)\prec_{N_0\rtimes\Gamma}N_0$, which implies by Lemma \ref{lemma.intertwining} that $Q_1\prec_N N_0$.
\hfill$\blacksquare$

The following result shows that the notion of relative solidity is closed by considering wreath products with amenable base. 
The proof is standard and we include a proof for the convenience of the reader.

\begin{proposition}\label{proposition.rs}
Let $G=A\rtimes_{\Gamma/K}\Gamma$ be a generalized wreath product group with $A$ an amenable group and $K<\Gamma$ an amenable malnormal subgroup. If $\Gamma$ is relatively solid, then $G$ is relatively solid.
\end{proposition}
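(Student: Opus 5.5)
Fix a trace preserving action $G\car N$, set $\mathcal M=N\rtimes G$, and let $Q_1,Q_2\subset q\mathcal Mq$ be commuting subalgebras with $Q_1\nprec_{\mathcal M}N$. The goal is to show that $Q_2$ is amenable relative to $N$ inside $\mathcal M$. The plan is to rewrite $\mathcal M$ as a crossed product by $\Gamma$, apply the relative solidity of $\Gamma$, and then pass from the intermediate algebra back down to $N$ by splitting into two cases.

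\textbf{Step 1 (rewrite and apply relative solidity of $\Gamma$).} Write $B=A^{(\Gamma/K)}$, so that $G=B\rtimes\Gamma$, and put $N_1=N\rtimes B$. Then $\mathcal M=N_1\rtimes\Gamma$ for the induced action $\Gamma\car N_1$. Relative solidity of $\Gamma$ gives two cases: either $Q_1\prec_{\mathcal M}N_1$, or $Q_2$ is amenable relative to $N_1$ inside $\mathcal M$. Since $B$ is amenable, $N_1$ is amenable relative to $N$ inside $\mathcal M$. By transitivity of relative amenability \cite[Proposition 2.4(3)]{OP07}, the second case already gives that $Q_2$ is amenable relative to $N$.

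\textbf{Step 2 (the case $Q_1\prec_{\mathcal M}N\rtimes B$).} Here I would exploit malnormality of $K$ through the quasi-normalizer. Take an intertwining $\theta:p_0Q_1p_0\to r(N\rtimes B)r$ with partial isometry $v$.
\begin{enumerate}
\item Since $Q_1\nprec N$, Lemma \ref{lemma.intertwining} shows that $\theta(p_0Q_1p_0)\nprec_{N\rtimes B}N$.
\item Write $B$ as the increasing union of the finitely supported subgroups $B_F=A^{F}$, for finite $F\subset\Gamma/K$. I would use the standard description of the normalizer of subalgebras of $N\rtimes B$ that do not embed into $N$. Because $K$ is malnormal, the stabilizer structure of $\Gamma\car\Gamma/K$ is controlled: for $g\notin K$, the elements $g$ and $K$ share at most one fixed coset. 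This is the \cite{IPV10}/\cite{CI17}-type control of the relative commutant and quasi-normalizer of $\theta(p_0Q_1p_0)$ inside $\mathcal M$.
\item The expected conclusion is that $Q_2$, which commutes with $Q_1$, satisfies $v\,Q_2\,v^*\prec N\rtimes(B\rtimes K')$ for some conjugate $K'$ of $K$, i.e.\ $Q_2$ embeds into $N\rtimes(B\rtimes \mathrm{Stab})$ where the stabilizer is amenable.
\item Since $B\rtimes K$ is amenable, this gives amenability of $Q_2$ relative to $N$, at least on a corner.
\end{enumerate}

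\textbf{Step 3 (upgrading from a corner).} To get amenability of all of $Q_2$ rather than a corner, I would run a maximality argument over projections in $Q_2'\cap q\mathcal Mq$, as in the moreover part of Lemma \ref{lemma.normalizer}. Alternatively, I would apply the dichotomy to $Q_1z$ for each central projection $z$ and use \cite[Lemma 2.6]{DHI16} to patch the pieces together.

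\textbf{Main obstacle.} Step 2 is the hard part: controlling the commutant of a subalgebra of $N\rtimes B$ not embedding into $N$ when the action of $G$ on $N$ is arbitrary. I would first try Popa's mixing argument applied to $\mathcal M$ as a crossed product of $N\rtimes B$ by $\Gamma$. The key input is that the $\Gamma$-action on $B$ is mixing relative to $\{gKg^{-1}\}$ by malnormality, and this should force the commutant into $N\rtimes(B\rtimes gKg^{-1})$. A subtlety is that the relative mixing has to hold relative to $N$ rather than being genuinely mixing; this is the step most likely to need care.
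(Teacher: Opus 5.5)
\textbf{There is a genuine gap: your Step 2, which carries all the content of the proof, rests on a mechanism that fails.}

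Your Step 1 is correct. You apply relative solidity of $\Gamma$ to $\Gamma\car N\rtimes B$, then use transitivity, since $N\rtimes B$ is amenable relative to $N$. The corner issue in Step 3 is handled most cleanly as in the paper: first use \cite[Lemma 2.6]{DHI16} to get $z\in(Q_1\vee Q_2)'\cap q\mathcal Mq$ with $Q_2z$ strongly non-amenable relative to $N$, then argue by contradiction.

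The problem with Step 2 is what mixing can and cannot control. Relative mixing of $\Gamma\car A^{(\Gamma/K)}$ with respect to the conjugates of $K$ controls quasi-normalizers of subalgebras sitting on the ``group side''. It does not control relative commutants of subalgebras of the base $N\rtimes B$, which is regular in $\mathcal M$. In fact the conclusion $Q_2\prec_{\mathcal M} N\rtimes(B\rtimes gKg^{-1})$ in your item (3) is false in general:
\begin{itemize}
\item Take $N=\mathbb C$, $A=\mathbb Z_2$, $K=\{e\}$, $\Gamma=\mathbb F_2$, and $\gamma\in\Gamma$ of infinite order.
\item Then $L(B)=L^\infty(X)$ with $X=\{0,1\}^\Gamma$, and $\langle\gamma\rangle\car X$ is an ergodic p.m.p. action.
\item By Dye's theorem its orbit relation is isomorphic to $\mathcal R_Y\times\mathcal R_Z$, with $\mathcal R_Y,\mathcal R_Z$ ergodic hyperfinite II$_1$ relations. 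Hence $L^\infty(X)\rtimes\langle\gamma\rangle\cong L(\mathcal R_Y)\bar\otimes L(\mathcal R_Z)$.
\item Put $Q_1=L^\infty(Y)\bar\otimes 1\subset L(B)$. It is diffuse, so $Q_1\nprec_{\mathcal M}\mathbb C$.
\item Put $Q_2=Q_1'\cap\mathcal M$. It contains the II$_1$ factor $1\bar\otimes L(\mathcal R_Z)$. Since $L(B)=L(B\rtimes gKg^{-1})$ is abelian, $Q_2\nprec_{\mathcal M}L(B\rtimes gKg^{-1})$ for every $g$.
\end{itemize}
So neither the containment nor even the intertwining version of your commutant control holds.

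What is true is your item (4): $Q_2$ is, on a corner, amenable relative to $N$. But this is a spectral gap phenomenon for the malleable (tensor length) deformation of the generalized Bernoulli crossed product, not a mixing phenomenon. It is exactly the content of \cite[Theorem 4.2 and Corollary 4.3]{IPV10}, which the paper invokes. If $Q_2z$ is strongly non-amenable relative to $N$, one of three things holds:
\begin{itemize}
\item $Q_1z\prec_{\mathcal M}N$, which is the desired conclusion;
\item $(Q_1\vee Q_2)z\prec_{\mathcal M}N\rtimes(A^{(\Gamma/K)}\rtimes K)$, which contradicts strong non-amenability because $A^{(\Gamma/K)}\rtimes K$ is amenable;
\item $(Q_1\vee Q_2)z\prec_{\mathcal M}N\rtimes\Gamma$, which gives $Q_1z\prec_{\mathcal M}N$ by Lemma \ref{lemma.rs}, i.e.\ relative solidity of $\Gamma$ applied inside $N\rtimes\Gamma$ rather than over $N\rtimes B$.
\end{itemize}
Without this input, or a reproof of it, your case $Q_1\prec_{\mathcal M}N\rtimes B$ has no argument. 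With it, your Step 1 becomes unnecessary.
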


{\it Proof.}
Let $G\car N$
 be a trace preserving action and denote $M=N\rtimes G$. Let $Q_1,Q_2\subset qMq$ be commuting von Neumann subalgebras with $Q_2$ non-amenable relative to $N$ inside $M$. By \cite[Lemma 2.6]{DHI16}, take a non-zero projection $z\in (Q_1\vee Q_2)'\cap qNq$ such that $Q_2$ is strongly non-amenable relative to $N$ inside $M$. By  \cite[Theorem 4.2]{IPV10} and \cite[Corollary 4.3]{IPV10}, it follows that (i) $Q_1z\prec_M N$, or (ii) $(Q_1\vee Q_2)z\prec_M N\rtimes (A^{(\Gamma/K)}\rtimes K) $, or (iii) $(Q_1\vee Q_2)z\prec_M N\rtimes\Gamma.$

If (i) holds, we are done. If (ii) holds, since $K$ and $A$ are amenable groups, \cite[Lemma 2.4]{DHI16} and \cite[Lemma 2.6]{DHI16}
 imply that $Q_2z$ is not strongly non-amenable relative to $N$, contradiction. In the case (iii), Lemma \ref{lemma.rs} implies that $Q_1z\prec_{M}N$.
\hfill$\blacksquare$

\subsection{Amalgamated free product groups and class $\mathcal C_{AFP}$}

The following theorem is a direct consequence of \cite{Io12a,Va13} and  its proof proceeds along the same lines as the proofs of \cite[Lemma 5.2]{KV15} and \cite[Theorem 3.1]{CdSS17}.

\begin{theorem}\label{theorem.classification}
Let $M=M_1*_B M_2$ be an amalgamated free product of tracial von Neumann algebras. If $Q_1,Q_2\subset pMp$ are commuting von Neumann subalgebras, then one of the following holds:
\begin{itemize}
    \item $Q_i\prec_M B$ for some $i\in\{1,2\}$.

    \item $Q_1\vee Q_2\prec_M M_i$ for some $i\in\{1,2\}$.

    \item $Q_1\vee Q_2$ is amenable relative to $B$ inside $M$.
\end{itemize}
\end{theorem}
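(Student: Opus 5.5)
The plan is to combine Ioana's dichotomy for amalgamated free products \cite{Io12a}, as strengthened by Vaes \cite{Va13}, with a relative amenability argument applied to the commuting pair. We follow the scheme of \cite[Lemma 5.2]{KV15} and \cite[Theorem 3.1]{CdSS17}.

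First, assume that $Q_1\nprec_M B$ and $Q_2\nprec_M B$; otherwise the first alternative holds and we are done. The key tool is the deformation from \cite{Io12a}: the free product malleable deformation $\theta_t$ of $\tilde M=M*_B(B\bar\otimes L(\mathbb F_2))$, or equivalently the dichotomy theorem \cite[Theorem 1.6]{Va13}. That theorem states the following for a subalgebra $Q\subset pMp$. Either $Q$ is amenable relative to $B$, or $Q\prec_M M_i$ for some $i$, or the normalizer $\mathcal N_{pMp}(Q)''$ is amenable relative to $B$. It then suffices to apply this with $Q=Q_1$. The algebra $Q_1\vee Q_2$ is contained in the normalizer of $Q_1$, since $Q_2$ commutes with $Q_1$. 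We obtain three cases: $Q_1$ is amenable relative to $B$; or $Q_1\prec_M M_i$; or $Q_1\vee Q_2$ is amenable relative to $B$, which is the third alternative.

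The first case should be handled by symmetry. Apply the same dichotomy to $Q_2$, whose normalizer also contains $Q_1\vee Q_2$. This gives that $Q_2\prec_M M_j$, or that $Q_1\vee Q_2$ is amenable relative to $B$. There is also the possibility that both $Q_1$ and $Q_2$ are amenable relative to $B$. In that case the von Neumann algebra they generate is amenable relative to $B$ as well, because they commute. I would prove this with the standard argument that relative amenability of commuting algebras passes to their join, \cite[Proposition 2.4]{OP07} style, using a $Q_1$-central state and averaging over $Q_2$. If this step turns out to be delicate, I would instead use Vaes' stronger form, which handles relatively amenable $Q$ with a normalizer that is not relatively amenable.

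So the remaining case is $Q_k\prec_M M_i$ with $Q_k\nprec_M B$, for some $k$. Here the standard control of quasi-normalizers in amalgamated free products \cite[Theorem 1.1]{IPP05}, in the form \cite[Corollary 2.10]{CI17}, applies. After conjugating a corner by a partial isometry $v$, the algebra $vQ_kv^*$ sits inside $M_i$ and does not embed into $B$ in $M_i$. Therefore the whole quasi-normalizer of this corner lies in $M_i$. That quasi-normalizer contains $v(Q_1\vee Q_2)v^*$, compressed appropriately. This yields $Q_1\vee Q_2\prec_M M_i$, which is the second alternative. The bookkeeping with corners should be done with Lemma \ref{lemma.qn.algebras} and Lemma \ref{lemma.intertwining}, exactly as in the proof of Lemma \ref{lemma.normalizer}.

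The main obstacle is the precise invocation of \cite{Io12a,Va13}. The dichotomy has to be applied on suitable corners and central cut-downs, since intertwining and relative amenability may hold only partially. We also need the passage from the normalizer of a corner $Q_1z$ to $Q_1\vee Q_2$. I would deal with both by working with maximal central projections in $(Q_1\vee Q_2)'\cap pMp$ on which relative amenability holds, using \cite[Lemma 2.6]{DHI16}. Then on the complement, which is strongly non-amenable relative to $B$, the deformation argument yields the intertwining alternatives.
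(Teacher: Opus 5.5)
\textbf{The main step relies on a misquoted theorem, so there is a genuine gap.} The dichotomy you attribute to \cite{Io12a,Va13} has an essential input hypothesis that your version drops. The correct statement (\cite[Theorem A]{Va13}) is: if $A\subset pMp$ is amenable relative to $B$, or more generally relative to one of the $M_i$, then $A\prec_M B$, or $\mathcal N_{pMp}(A)''\prec_M M_i$ for some $i$, or $\mathcal N_{pMp}(A)''$ is amenable relative to $B$.

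Your version puts no hypothesis on $Q$. Since its third alternative implies its first, it says that every $Q\subset pMp$ is amenable relative to $B$ or embeds into some $M_i$. This already fails for $Q=M=L(\mathbb F_2)=L(\mathbb Z)*L(\mathbb Z)$ with $B=\mathbb C$. So applying it to an arbitrary $Q_1$ yields nothing, and your three cases are not exhaustive. The essential case, where $Q_1$ is not amenable relative to $B$ and $Q_1\nprec_M M_i$, is never treated.

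\textbf{The missing idea} is to feed the theorem an amenable algebra whose normalizer contains $Q_2$. This is the paper's route.
\begin{enumerate}
\item Since $Q_1\nprec_M B$, \cite[Corollary F.14]{BO08} gives a diffuse abelian $Q_0\subset Q_1$ with $Q_0\nprec_M B$.
\item Then $Q_2\subset Q_0'\cap pMp\subset\mathcal N_{pMp}(Q_0)''$. So \cite[Theorem A]{Va13} applied to $A=Q_0$ gives either $Q_2\prec_M M_i$ or $Q_2$ amenable relative to $B$.
\item If $Q_2\prec_M M_i$, your IPP step (as in Lemma \ref{lemma.normalizer}, via \cite[Corollary 2.10]{CI17}) gives $Q_2\prec_M B$ or $Q_1\vee Q_2\prec_M M_i$, because $Q_1$ lies in the normalizer of $Q_2$. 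That step of yours is fine.
\item If $Q_2$ is amenable relative to $B$, apply \cite[Theorem A]{Va13} again with $A=Q_2$, whose normalizer contains $Q_1\vee Q_2$.
\end{enumerate}

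\textbf{The join claim is unnecessary and false in general.} You propose to show that two commuting algebras amenable relative to $B$ generate an algebra amenable relative to $B$, by averaging a $Q_1$-central state over $Q_2$. That averaging needs $Q_2$ to be amenable, not merely amenable relative to $B$. The claim fails in general. Take $G=(\Gamma_1\times\Gamma_2)*\langle s,t\rangle$ with $\Gamma_1\cong\Gamma_2\cong\langle s,t\rangle\cong\mathbb F_2$, and $K=\langle t^{-1}\Gamma_1t,\,s^{-1}\Gamma_2s\rangle$. Both $L(\Gamma_1)$ and $L(\Gamma_2)$ even embed into $L(K)$. However, every $(\Gamma_1\times\Gamma_2)$-orbit on $G/K$ is free for $\Gamma_1$ or for $\Gamma_2$, so $L(\Gamma_1\times\Gamma_2)$ is not amenable relative to $L(K)$.

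When $Q_1$ is amenable relative to $B$, the correct form of Vaes' theorem applied to $A=Q_1$ controls $\mathcal N_{pMp}(Q_1)''\supset Q_1\vee Q_2$ directly. This is what your fallback sentence was reaching for.

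\textbf{Your closing paragraph is not yet an argument.} To make "the deformation argument yields the intertwining alternatives" rigorous, you would need a spectral-gap statement for the free malleable deformation on $Q_2'\cap pMp$, followed by IPP. The part of $Q_2$ that is amenable relative to $B$ would still require Vaes' theorem with its relative amenability hypothesis.
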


{\it Proof.} Assume that $Q_1\nprec_M B$. By \cite[Corollary F.14]{BO08}, there exists a diffuse abelian von Neumann subalgebra  $Q_0\subset Q_1$ satisfying $Q_0\nprec_M B$. Since $Q_0$ is amenable, by \cite[Thereom A]{Va13}, we get that (i) $Q_2\prec_{M} M_i$ for some $i\in \{1,2\}$ or (ii) $Q_2$ is amenable relative to $B$ inside $M$. If (i) holds, then Lemma \ref{lemma.normalizer} implies that $Q_2\prec_M B$ or $Q_1\vee Q_2 \prec_M M_i$. If (ii) holds, then \cite[Theorem A]{Va13} implies that $Q_2\prec_M B$ or $Q_1 \vee Q_2\prec_{M} M_j$ for some $j\in\{1,2\}$ or $Q_1\vee Q_2$ is amenable relative to $B$ inside $M$.  In all cases, the conclusion follows.
\hfill$\blacksquare$


\begin{corollary}\label{corollary.classification}
Let $M=M_1*_B M_2$ be an amalgamated free product of tracial von Neumann algebras and let $N$ be a tracial von Neumann algebra satisfying $M\subset pNp$.

Let $Q_1,Q_2\subset qNq$ be commuting von Neumann subalgebras such that $Q_1$ or $Q_2$ contains a II$_1$ subfactor. If $Q_1\vee Q_2\prec_{N} M$ and $Q_2$ is strongly non-amenable relative to $B$ inside $N$, then either $Q_1\prec_N B$ or $Q_1\vee Q_2 \prec_N M_i$ for some $i\in\{1,2\}$.
\end{corollary}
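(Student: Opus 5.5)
The approach follows the proof of Lemma \ref{lemma.rs} step by step, with Theorem \ref{theorem.classification} playing the role of relative solidity.

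\emph{Step 1: pass into $M$.} Since $Q_1\vee Q_2\prec_N M$, there are projections $q_0\in Q_1\vee Q_2$ and $r\in M$, a non-zero partial isometry $v\in rNq_0$, and a $*$-homomorphism $\theta: q_0(Q_1\vee Q_2)q_0\to rMr$ such that $\theta(x)v=vx$ for all $x$ in the domain. Assume first that $Q_1$ contains a II$_1$ subfactor. Then \cite[Lemma 4.5]{CdSS17} lets us take $q_0\in Q_1$. It follows that $\theta(q_0Q_1q_0)$ and $\theta(Q_2q_0)$ are commuting subalgebras of $rMr$. By cutting $r$ down to the support of $E_M(vv^*)$ (replacing $r$ by this support and restricting $\theta$ accordingly), the hypotheses of Lemma \ref{lemma.intertwining} hold with $P=Q_1\vee Q_2$ and $Q=M$. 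If $Q_2$ contains the II$_1$ subfactor instead, take $q_0\in Q_2$; the argument is symmetric.

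\emph{Step 2: apply Theorem \ref{theorem.classification} in $M=M_1*_BM_2$.} Apply it to the commuting pair $\theta(q_0Q_1q_0)$, $\theta(Q_2q_0)$ (or to their natural compressions by $q_0$). One of three alternatives holds.

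\begin{enumerate}
\item[(a)] $\theta(q_0Q_1q_0)\prec_M B$, or $\theta(Q_2q_0)\prec_M B$.
\item[(b)] $\theta(q_0(Q_1\vee Q_2)q_0)\prec_M M_i$ for some $i$.
\item[(c)] $\theta(q_0(Q_1\vee Q_2)q_0)$ is amenable relative to $B$ inside $M$.
\end{enumerate}

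\emph{Step 3: transfer each alternative back to $N$ using Lemma \ref{lemma.intertwining}.}

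In case (c), $\theta(Q_2q_0)$ is amenable relative to $B$ inside $M$, so it is not strongly non-amenable relative to $B$. Lemma \ref{lemma.intertwining}(2) then shows that $Q_2$ is not strongly non-amenable relative to $B$ inside $N$, contradicting the hypothesis.

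In case (a), suppose the $Q_2$-part embeds, $\theta(Q_2q_0)\prec_M B$. This forces $\theta(Q_2q_0)$ to be amenable relative to $B$ on some corner, which again contradicts strong non-amenability via Lemma \ref{lemma.intertwining}(2). The remaining subcase $\theta(q_0Q_1q_0)\prec_M B$ gives $Q_1\prec_N B$ by Lemma \ref{lemma.intertwining}(1).

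In case (b), Lemma \ref{lemma.intertwining}(1) applied with $P_0=Q_1\vee Q_2$ and $Q_0=M_i$ gives $Q_1\vee Q_2\prec_N M_i$.

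\emph{Expected obstacle.} The delicate point is bookkeeping with projections. Lemma \ref{lemma.intertwining} requires $p'\in P_0$ or $p'\in P_0'\cap P$. Taking $q_0\in Q_1$ handles this: $q_0\in Q_1\subset Q_1\vee Q_2$, and $q_0$ commutes with $Q_2$. One must also check that Theorem \ref{theorem.classification} can be applied inside the corner $rMr$, and that relative amenability of a compression of $Q_2$ passes to the statement ``not strongly non-amenable'' for $Q_2$. Both are standard by \cite[Lemma 2.6]{DHI16}.
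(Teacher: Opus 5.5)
Your proposal is correct and follows the paper's proof essentially line by line: you use \cite[Lemma 4.5]{CdSS17} to arrange $q_0\in Q_1$, apply Theorem \ref{theorem.classification} to the commuting pair $\theta(q_0Q_1q_0)$, $\theta(Q_2q_0)$, and pull each alternative back to $N$ with Lemma \ref{lemma.intertwining}. The differences are cosmetic. First, you explicitly cut $r$ down to the support of $E_M(vv^*)$, which the lemma requires and the paper leaves implicit. Second, in the subcase $\theta(Q_2q_0)\prec_M B$ you pass to relative amenability inside $M$ and then transfer with part (2) of the lemma, whereas the paper transfers the intertwining to $Q_2\prec_N B$ with part (1) and then invokes \cite[Lemmas 2.4, 2.6]{DHI16} inside $N$.
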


{\it Proof.} Assume $Q_1$ contains a II$_1$ subfactor (we may proceed similarly for $Q_2$).
By assumption, there are projections $q_0\in Q_1\vee Q_2$ and $r\in M$, a non-zero partial isometry $v\in rNp$ and a $*$-homomorphism $\theta: q_0(Q_1\vee Q_2)q_0\to rMr$ such that $\theta(x)v=vx$ for any $x\in q_0(Q_1\vee Q_2)q_0$. Since $Q_1$ contains a II$_1$ subfactor, we may assume by \cite[Lemma 4.5]{CdSS17} that $q_0\in Q_1$. By Theorem \ref{theorem.classification}, get that one of the following holds: 
\begin{enumerate}
    \item $\theta(q_0Q_iq_0)\prec_M B$ for some $i\in\{1,2\}$,

    \item $\theta(q_0(Q_1\vee Q_2)q_0)\prec_M M_i$ for some $i\in\{1,2\}$,

    \item $\theta(q_0(Q_1\vee Q_2)q_0)$ is amenable relative to $B$ inside $M$.
\end{enumerate}

If (1) holds with $i=1$, Lemma \ref{lemma.intertwining} gives that $Q_1\prec_{N}B$. If (1) holds with $i=2$, the same lemma gives that $Q_2\prec_N B$. Using \cite[Lemma 2.4 and Lemma 2.6]{DHI16} we contradict that $Q_2$ is strongly non-amenable relative to $B$ inside $N$. If (2) holds, Lemma \ref{lemma.intertwining} gives $Q_1\vee Q_2\prec_{N} M_i$. Finally, if (3) holds, then $\theta(Q_2 q_0)$ is amenable relative to $B$ inside $M$, and by Lemma \ref{lemma.intertwining} we get that $Q_2$ is not strongly non-amenable relative to $B$ inside $M$, contradiction.
\hfill$\blacksquare$





We end the section by showing that products of groups from $\mathcal C_{AFP}^0$ have trivial amenable radical.



\begin{lemma}\label{lemma.trivial.radical}
Let $G_1,\dots,G_n$ be groups from class $\mathcal C_{AFP}^0$. Then $G_1\times\dots\times G_n$ has trivial amenable radical. 
\end{lemma}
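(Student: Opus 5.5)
Since the amenable radical of a finite product $G_1\times\dots\times G_n$ is the product of the amenable radicals of the factors, it suffices to show that each $G\in\mathcal C^0_{AFP}$ has trivial amenable radical. One inclusion is the point: if $R\lhd G_1\times\dots\times G_n$ is amenable, then its projection $\pi_i(R)$ to $G_i$ is a normal amenable subgroup of $G_i$, hence trivial, so $R$ is trivial. Thus we fix $G=G_1*_\Sigma G_2\in\mathcal C^0_{AFP}$ and an amenable normal subgroup $R\lhd G$, and aim to show $R=\{1\}$.

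Step 1: $R$ lies in a conjugate of a vertex group. We use the action of $G$ on the Bass--Serre tree $T$ of the amalgam. An amenable group acting on a tree either fixes a vertex, fixes an end, or preserves a pair of ends (a line). If $R$ fixes no vertex, then since $R$ is normal, the set of $R$-fixed ends (of cardinality $1$ or $2$), or the $R$-invariant line, is $G$-invariant. Hence $G$ itself would fix an end or a pair of ends, which forces $G$ to be an ascending HNN-type or virtually a line action. This is impossible for a nondegenerate amalgam: here $\Sigma$ has infinite index in both $G_i$, since $G_i$ is non-amenable and $\Sigma$ is amenable, so $G$ contains hyperbolic elements with independent axes. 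So $R$ fixes a vertex. By normality $R$ fixes every vertex of the orbit, hence all of $T$ (the orbits of the two vertex types are adjacent), and therefore $R\subset \bigcap_{g\in G} g\Sigma g^{-1}$.

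Step 2: conclude using condition (iv). By (iv) there are $g_1,\dots,g_m$ with $\bigcap_k g_k\Sigma g_k^{-1}$ finite, so $R$ is a finite normal subgroup of $G$. In particular $R\cap G_1$ is a finite normal subgroup of $G_1=G_1^1\times G_1^2$. Its projections to the icc factors $G_1^j$ are finite normal subgroups, and an icc group has no nontrivial finite normal subgroup, since a finite normal subgroup has finite conjugacy classes. Hence $R\cap G_1$ is trivial, and $R\subset \Sigma\subset G_1$ gives $R=\{1\}$. Alternatively, and more directly, $R\subset\Sigma$ is a finite normal subgroup of $\Sigma$, and $\Sigma$ is icc, so $R$ is trivial.

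The main obstacle is making Step 1 rigorous. I would try first the standard trichotomy for amenable groups acting on trees, combined with the fact that $G$ acts minimally and with no fixed end whenever both indices $[G_i:\Sigma]$ are at least $2$, one of them at least $3$. Those indices are infinite here. If this is delicate, an alternative is an operator-algebraic route. An amenable normal $R$ gives $L(R)$ regular in $L(G)$ and amenable, so Theorem~\ref{theorem.classification}, or Vaes' dichotomy, yields $L(R)\prec L(\Sigma)$ unless $L(G)$ is amenable relative to $L(\Sigma)$. The latter is false because $G_1$ is non-amenable. One then obtains $R$ virtually contained in conjugates of $\Sigma$, and the same finiteness argument applies.
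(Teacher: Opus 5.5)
Your proof is correct, but it takes a different route from the paper.

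\textbf{The paper's route.} It works operator-algebraically and on the whole product at once. Since $L(A)$ is a regular amenable subalgebra, Vaes' dichotomy \cite[Theorem A]{Va13} gives $L(A)\prec^s_M L(\Sigma_1\times\dots\times\Sigma_n)$. Then \cite[Proposition 8]{HPV11} passes to the finite intersections of conjugates supplied by (iv), so $A$ is finite, and icc-ness of $G$ forces $A$ to be trivial.

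\textbf{Your route.} You first reduce to a single amalgam by projecting $R$ onto the factors. You then use Bass--Serre theory: the fixed vertex/fixed end/invariant line trichotomy for amenable groups acting on trees, combined with normality, forces $R$ to act trivially on the tree. Hence $R\subseteq\bigcap_{g\in G}g\Sigma g^{-1}$, and (iv) together with icc-ness of $\Sigma$ finishes the proof.

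\textbf{Comparison.} Your argument is elementary and self-contained. It actually shows that the amenable radical of any nondegenerate amalgam lies in the normal core of $\Sigma$: amenability of $\Sigma$ enters only through $[G_i:\Sigma]=\infty$. At the last step you need only hypothesis (i), whereas the paper invokes icc-ness of the whole group. The paper's route stays inside the intertwining machinery used throughout, but rests on much deeper results.

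\textbf{The obstacle you flagged.} It can be settled concretely. Suppose $G$ fixed an end or preserved a pair of ends. Then a subgroup of index at most $2$ in the vertex stabilizer $G_1$ would fix a ray issuing from the vertex $G_1$. It would therefore lie in the stabilizer $g\Sigma g^{-1}$, with $g\in G_1$, of the ray's first edge. This forces $[G_1:\Sigma]\le 2$, a contradiction.

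\textbf{Your operator-algebraic fallback.} Here normality of $R$ does the job of \cite{HPV11}. From $L(R)\prec L(\Sigma)$ you get $[R:R\cap g\Sigma g^{-1}]<\infty$ for some $g$. Conjugating by elements of $G$ gives the same for every conjugate of $\Sigma$. So $R$ is virtually contained in the finite intersection from (iv), hence is finite.
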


{\it Proof.} 
Denote $G= G_1\times\dots\times G_n$ and $M=L(G)$. For any $i\in \{1,\dots,n\}$, write $G_i=G_{i,1}*_{\Sigma_i} G_{i,2}$. Note that we can take $\Sigma_i$ such that there exist group elements $g_i^1, \dots, g_i^{m_i}\in G_i$ for which $\cap_{k=1}^{m_i} g_i^k \Sigma_i (g_i^k)^{-1}$ is finite.
Let $A$ be a normal amenable subgroup of $G$. Since $L(A)\subset M$ is regular, \cite[Theorem A]{Va13} implies that $L(A) \prec_{M}^s L(\Sigma_1\times \dots\times\Sigma_n)$. By \cite[Proposition 8]{HPV11}, we derive that $L(A) \prec_{M}^s L((\cap_{k=1}^{m_1} g_1^k \Sigma_1 (g_1^k)^{-1})\times \dots\times (\cap_{k=1}^{m_n} g_n^k \Sigma_n (g_n^k)^{-1}))$, implying that $A$ has to be a finite group. Since $G$ is icc and $A$ is normal in $G$, it follows that $A$ is trivial.
\hfill$\blacksquare$

\section{Solidity type results}\label{section.solidity}

The goal of this section is to provide two solidity-type results for
group von Neumann algebras $L(\tilde G\times\tilde G^0)$ where $\tilde G$ is a product of groups from $\mathcal C_{AFP}$ and $\tilde G^0$ is a product of icc relatively solid groups. 

We first establish the following notation that will be assumed throughout the section.

\begin{notation}\label{notation} Let $\tilde G=G_1\times\dots\times G_n$ be a product of $n\ge 1$ groups that belong to $\mathcal C_{AFP}$. For any $1\leq j\leq n$ write $G_j=G_{j,1}*_{\Sigma_j} G_{j,2}$ where:
\begin{itemize}
    \item $\Sigma_j$ is an icc amenable group,

    \item $G_{j,k}=G_{j,k}^{1} \times G_{j,k}^{2}$ is a product of icc, relatively solid groups, for any $1\leq k\leq 2.$

\end{itemize}

Let $\tilde G^0=G_1^0\times\dots\times G_m^0$ is a product of $m\ge 0$ icc relatively solid groups. Let $G=\tilde G\times \tilde G^0$ and denote $M=L(G)$.

Let $D\subset N$ be an inclusion of tracial von Neumann algebras such that $D\bar\otimes M^t\subset N$ for some $t>0$.
Let $H$ be a countable group such that $N=L(H)$. Following \cite{PV09}, we let $\Delta:N\to N\bar\otimes N$ be the $*$-homomorphism given by $\Delta(v_h)=v_h\otimes v_h$, for any $h\in H$.

\end{notation}

\begin{lemma}\label{lemma.classification}
Let $P_0, P_1$ be commuting von Neumann subalgebras of $N$ such that $P_0\vee P_1\prec_{N} D\bar\otimes M^t$.

If $P_1$ is  strongly non-amenable relative to $D$ inside $N$ and $P_1$ contains a II$_1$ subfactor, then one of the following holds:
\begin{enumerate}
    \item $P_0\prec_{N}D\bar\otimes L(\tilde G_{\widehat j}\times \Sigma_j)^t \bar\otimes L(\tilde G^0)$ for some $j\in \{1,\dots,n\}$, or

    \item $P_0\vee P_1\prec_{N}D\bar\otimes L(\tilde G_{\widehat j}\times \tilde G_{j,k})^t \bar\otimes L(\tilde G^0)$ for some $j\in \{1,\dots,n\}$ and $k\in \{1,2\}$, or

    \item $P_0\prec_{N}D\bar\otimes L(\tilde G)^t \bar\otimes L(\tilde G^0_{\widehat i})$ for some $i\in \{1,\dots,m\}$.
\end{enumerate}
\end{lemma}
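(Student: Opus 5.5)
The plan is to peel off the factors of $G=\tilde G\times\tilde G^0$ one at a time. Each single factor is handled either by Corollary \ref{corollary.classification} (for the amalgamated free product factors $G_j$) or by Lemma \ref{lemma.rs} (for the relatively solid factors $G^0_i$).

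First I would realize $D\bar\otimes M^t$ as a crossed product in two ways. Fix $j\in\{1,\dots,n\}$ and let $N_j:=D\bar\otimes L(\tilde G_{\widehat j})^t\bar\otimes L(\tilde G^0)$. Then
$$D\bar\otimes M^t=N_j\bar\otimes L(G_j)=(N_j\bar\otimes L(G_{j,1}))*_{N_j\bar\otimes L(\Sigma_j)}(N_j\bar\otimes L(G_{j,2})).$$
Here the amplification is absorbed into the $\tilde G_{\widehat j}$ part when $n\ge2$. When $n=1$ I would absorb it into a corner, replacing $M^t$ by a corner of $M_k(\mathbb C)\otimes M$ and using $M_k(D)$ in place of $D$. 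Similarly, for $i\in\{1,\dots,m\}$,
$$D\bar\otimes M^t=(D\bar\otimes L(\tilde G)^t\bar\otimes L(\tilde G^0_{\widehat i}))\rtimes G^0_i,$$
with the trivial action of $G^0_i$.

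The core of the argument is an induction on the factors. I would apply Corollary \ref{corollary.classification} with $Q_1=P_0$ and $Q_2=P_1$, to the amalgamated free product decomposition for $j=1$, inside $N$. Its hypotheses hold for the following reasons.
\begin{itemize}
\item $P_1$ contains a II$_1$ subfactor.
\item $P_0\vee P_1\prec_N D\bar\otimes M^t$.
\item $P_1$ is strongly non-amenable relative to $N_1\bar\otimes L(\Sigma_1)$, because $L(\Sigma_1)$ is amenable and relative amenability is transitive: amenability relative to $D\bar\otimes L(\tilde G_{\widehat 1}\times\Sigma_1)^t\bar\otimes L(\tilde G^0)$ would give amenability relative to a smaller algebra.
\end{itemize}
Strong non-amenability is not yet relative to $D$ at this stage, so I would keep track of the larger algebra instead. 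This suggests running the argument with the algebra $B$ decreasing step by step.

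The corollary gives two outcomes. Either $P_0\prec N_1\bar\otimes L(\Sigma_1)$, which is conclusion (1), or $P_0\vee P_1\prec N_1\bar\otimes L(G_{1,k})$, which is conclusion (2). The remaining alternative, relative amenability, is excluded inside the corollary. So for $j=1$ we conclude immediately.

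Rereading this, a single application already yields (1) or (2) without looking at the other factors. The only gap is verifying strong non-amenability relative to $N_1\bar\otimes L(\Sigma_1)$ from the hypothesis relative to $D$. I expect this to be the main obstacle, because relative amenability passes from a smaller algebra to a larger one, not the reverse. The hypothesis is relative to $D$, which is the smaller algebra, so it does not directly give what is needed.

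To fix this, I would instead peel from the smallest end, an iterative descent. Let $\mathcal B_0:=D\bar\otimes M^t$ and let $\mathcal B_{\ell+1}$ be obtained from $\mathcal B_\ell$ by replacing one factor $L(G_j)$ by $L(\Sigma_j)$, or one factor $L(G^0_i)$ by $\mathbb C$. The descent proceeds as follows.
\begin{itemize}
\item If $P_1$ is strongly non-amenable relative to $\mathcal B_{\ell+1}$, apply Corollary \ref{corollary.classification} in the $G_j$ case, or Lemma \ref{lemma.rs} in the $G^0_i$ case. This yields (1) or (2), or $P_0\prec$ the algebra with $G^0_i$ removed, which is (3) by enlarging the target.
\item Otherwise, there is a corner on which $P_1$ is amenable relative to $\mathcal B_{\ell+1}$. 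Pass to that corner, via \cite[Lemma 2.6]{DHI16}, and continue.
\end{itemize}
After all factors are removed, $\mathcal B_{\rm final}=D\bar\otimes L(\times_j\Sigma_j)^t$. Amenability relative to this algebra, combined with the amenability of the $\Sigma_j$, gives a corner of $P_1$ amenable relative to $D$. That contradicts the hypothesis. The bookkeeping of corners and of the intertwining $P_0\vee P_1\prec\mathcal B_\ell$ along the chain would use Lemma \ref{lemma.intertwining}.
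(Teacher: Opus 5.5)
Your descent does not close the gap you correctly identified. In the ``if'' branch at stage $\ell+1$ you want to apply Corollary \ref{corollary.classification}, but neither available amalgamated free product decomposition fits.

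\begin{itemize}
\item Write $\mathcal B_\ell=C\bar\otimes L(G_j)=(C\bar\otimes L(G_{j,1}))*_{\mathcal B_{\ell+1}}(C\bar\otimes L(G_{j,2}))$. The corollary then requires $(P_0\vee P_1)q_\ell\prec_N\mathcal B_\ell$. Your ``otherwise'' branches only ever produce that $P_1q_\ell$ is \emph{amenable relative to} $\mathcal B_\ell$, never an intertwining into $\mathcal B_\ell$. Lemma \ref{lemma.intertwining} cannot manufacture one.
\item Use instead the decomposition of $D\bar\otimes M^t$ over $D\bar\otimes L(\tilde G_{\widehat j}\times\Sigma_j)^t\bar\otimes L(\tilde G^0)$. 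Then you need strong non-amenability relative to that algebra. It contains $\mathcal B_{\ell+1}$, so you hit the same wrong-direction problem as in your first attempt.
\end{itemize}

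The missing ingredient is \cite[Lemma 2.8]{DHI16}. For commuting-square subalgebras of this product type, a subalgebra that is amenable relative to each of them is amenable relative to their intersection. The algebras $D\bar\otimes L(\tilde G_{\widehat j})^t\bar\otimes L(\tilde G^0)$ ($1\le j\le n$) and $D\bar\otimes L(\tilde G)^t\bar\otimes L(\tilde G^0_{\widehat i})$ ($1\le i\le m$) intersect in $D$. Hence non-amenability relative to $D$ forces non-amenability relative to one of them. A cut-down via \cite[Lemma 2.6]{DHI16} then gives strong non-amenability. Amenability of $\Sigma_j$ transfers this to the amalgam $D\bar\otimes L(\tilde G_{\widehat j}\times\Sigma_j)^t\bar\otimes L(\tilde G^0)$. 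A single application of Corollary \ref{corollary.classification} or Lemma \ref{lemma.rs} finishes, and no descent is needed.

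There is also a bookkeeping problem with your corners. You cut by projections $q_\ell\in\mathcal N_N(P_1)'\cap N$ chosen for relative-amenability reasons. But $P_0\vee P_1\prec_N D\bar\otimes M^t$ only holds on some corner, so $(P_0\vee P_1)q_\ell\prec_N D\bar\otimes M^t$ may fail, and with it the hypothesis of Corollary \ref{corollary.classification}.

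The paper sidesteps this in four steps:
\begin{enumerate}
\item It transports everything through the intertwining map $\theta:p(P_0\vee P_1)p\to r(D\bar\otimes M^t)r$, with $p\in P_1$ arranged via \cite[Lemma 4.5]{CdSS17}.
\item It uses Lemma \ref{lemma.intertwining} to keep $\theta(pP_1p)$ non-amenable relative to $D$.
\item It cuts down by a projection $e\in\theta(p(P_0\vee P_1)p)'\cap r(D\bar\otimes M^t)r$. Inside $D\bar\otimes M^t$, all intertwining hypotheses are then automatic.
\item It pulls the conclusions back to $P_0$ and $P_0\vee P_1$ with Lemma \ref{lemma.intertwining}.
\end{enumerate}
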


{\it Proof.} For ease of notation, we will assume that $t=1$.
By assumption there are projections $p\in P_0\vee P_1$ and $r\in D\bar\otimes M$, a non-zero partial isometry $v\in rNp$ and a $*$-homomorphism $\theta: p(P_0\vee P_1)p\to r(D\bar\otimes M)r$ such that $\theta(x)v=vx$ for any $x\in p(P_0\vee P_1)p$. Since $P_1$ contains a  II$_1$ subfactor, we may assume by \cite[Lemma 4.5]{CdSS17} that $p\in P_1$. By Lemma \ref{lemma.intertwining}, $\theta(pP_1p)$ is non-amenable relative to $D$ inside $D\bar\otimes M$. By \cite[Lemma 2.8]{DHI16} one of the following holds:
\begin{itemize}
    \item [(a)] $\theta(pP_1p)$ is non-amenable relative to $D\bar\otimes L(\tilde G_{\widehat j})\bar\otimes L(\tilde G^0)$ inside $D\bar\otimes M$ for some $j\in\{1,\dots,n\}$, or

    \item [(b)]  $\theta(pP_1p)$ is non-amenable relative to $D\bar\otimes L(\tilde G)\bar\otimes L(\tilde G_{\widehat i}^0)$ inside $D\bar\otimes M$ for some $i\in\{1,\dots,m\}$.
\end{itemize}

By \cite[Lemma 2.6]{DHI16} there is a non-zero projection $e\in \theta(p(P_0\vee P_1)p)'\cap r(D\bar\otimes M)r$ such that one of the following holds:
\begin{itemize}
    \item [(a.0)] $\theta(pP_1p)e$ is strongly non-amenable relative to $D\bar\otimes L(\tilde G_{\widehat j})\bar\otimes L(\tilde G^0)$ inside $D\bar\otimes M$, or

    \item [(b.0)] $\theta(pP_1p)e$ is strongly non-amenable relative to $D\bar\otimes L(\tilde G)\bar\otimes L(\tilde G_{\widehat i}^0)$ inside $D\bar\otimes M$.
\end{itemize}

By Corollary \ref{corollary.classification} and Lemma \ref{lemma.rs}, we get that one of the following holds:
\begin{enumerate}
    \item [(i)] $\theta(P_0p)\prec_{D\bar\otimes M}D\bar\otimes L(\tilde G_{\widehat j}\times \Sigma_j) \bar\otimes L(\tilde G^0)$ for some $j\in \{1,\dots,n\}$, or

    \item [(ii)] $\theta(p(P_0\vee P_1)p)\prec_{D\bar\otimes M}D\bar\otimes L(\tilde G_{\widehat j}\times \tilde G_{j,k}) \bar\otimes L(\tilde G^0)$ for some $j\in \{1,\dots,n\}$ and $k\in \{1,2\}$, or

    \item [(iii)] $\theta(P_0p)\prec_{D\bar\otimes M}D\bar\otimes L(\tilde G) \bar\otimes L(\tilde G^0_{\widehat i})$ for some $i\in \{1,\dots,m\}$.
\end{enumerate}
The conclusion now follows by Lemma \ref{lemma.intertwining}.
\hfill$\blacksquare$

The following result shows, in particular, that since $\tilde G$ is a product of $n$ groups from $\mathcal C_{AFP}$, then $L(\tilde G)$ cannot contain more than $2n$ commuting non-amenable subfactors.

\begin{lemma}\label{lemma.count}
Let $s\ge 1$ be an integer and $P_0, P_1,\dots,P_s$ be commuting von Neumann subalgebras of $N$ such that $P_0\vee P_1\vee\dots \vee P_s \prec_{N} D\bar\otimes M^t$.

If $P_i$ is a factor that is strongly non-amenable relative to $D$ inside $N$ for any $1\leq i\leq n$, then $|s|\leq 2n+m$. 

Moreover, if equality holds, then $P_0\prec_{N} D$.
\end{lemma}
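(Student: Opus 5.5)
We argue by induction on the total "complexity" $2n+m$ of the ambient group $G=\tilde G\times\tilde G^0$, with Lemma \ref{lemma.classification} as the inductive engine. (We read the hypothesis as requiring that each $P_i$, $1\le i\le s$, is a non-amenable factor; "contains a II$_1$ subfactor" then holds automatically, since a non-amenable factor is of type II$_1$.) The idea is that each application of Lemma \ref{lemma.classification} either strips one relatively solid factor off $M$ or passes to an intertwining of a smaller algebra. Each time, the corresponding $P_i$ becomes amenable relative to the new, smaller, base.

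The main step is as follows. Put $P_0':=P_0\vee P_2\vee\dots\vee P_s$, which commutes with $P_1$. Apply Lemma \ref{lemma.classification} to the pair $(P_0',P_1)$; this is allowed because $P_1$ is a factor, strongly non-amenable relative to $D$, and contains a II$_1$ subfactor. One of three outcomes occurs.

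In case (3), $P_0'$ intertwines into $D\bar\otimes L(\tilde G)^t\bar\otimes L(\tilde G^0_{\widehat i})$. Regard this algebra as $\tilde D\bar\otimes L(\tilde G\times\tilde G^0_{\widehat i})^t$ with a new base $\tilde D=D$, so the ambient group now has parameters $(n,m-1)$.

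In case (1), absorb $L(\Sigma_j)$ into the base. Set $\tilde D=D\bar\otimes L(\Sigma_j)^{t_j}$; then $P_0'$ intertwines into $\tilde D\bar\otimes L(\tilde G_{\widehat j})\bar\otimes L(\tilde G^0)$, which has parameters $(n-1,m)$.

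In case (2), $P_0'\vee P_1$ intertwines into $D\bar\otimes L(\tilde G_{\widehat j}\times G_{j,k}^1\times G_{j,k}^2)\bar\otimes L(\tilde G^0)$. Here the factor $G_j$ has been replaced by two icc relatively solid groups, i.e. $\tilde G^0$ grows by $2$, so the parameters are $(n-1,m+2)$. The complexity $2n+m$ is unchanged, but the number of AFP factors $n$ drops. The induction therefore runs lexicographically on $(n,2n+m)$, and the base case $n=0$ is handled by repeated use of case (3) alone.

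To continue the induction we need the hypotheses to pass to the new setting. In cases (1) and (3) the algebras $P_2,\dots,P_s$ are still factors, and we need them to remain strongly non-amenable relative to the new base $\tilde D$. For case (3) the base is unchanged. For case (1) we use that $L(\Sigma_j)$ is amenable: relative amenability with respect to $D\bar\otimes L(\Sigma_j)$ upgrades to relative amenability with respect to $D$ by \cite[Lemma 2.4]{DHI16} (transitivity of relative amenability), combined with \cite[Lemma 2.6]{DHI16}. In case (2) all of $P_1,\dots,P_s$ survive. The induction hypothesis then gives $s-1\le 2n+m-1$ in cases (1) and (3), and $s\le 2(n-1)+(m+2)=2n+m$ in case (2). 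Hence $s\le 2n+m$ in all cases.

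For the equality statement, assume $s=2n+m$. Tracing the same recursion, every step must be tight, which forces a terminal stage with $n=m=0$ and $s=0$. At that stage $P_0$ intertwines into the accumulated base $D\bar\otimes(\text{amenable } L(\Sigma)$'s$)$. A final use of case (1) or (3), with no $P_i$ left, yields $P_0\prec_N D\bar\otimes L(\Sigma_{j_1}\times\cdots)$. Since the $\Sigma_j$ are amenable but not finite, this does not yet give $P_0\prec_N D$; this is the point where a different argument is needed.

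\textbf{This equality case is the main obstacle.} We would redo the recursion keeping the base fixed equal to $D$ and interpret case (1) differently: when equality holds, case (1) cannot occur, because it loses one from $s$ while reducing $2n+m$ by two. So, under equality, only cases (2) and (3) are ever used. Case (3) keeps the base equal to $D$, and in case (2) the base is untouched. At the end, case (3) applied with $n=0$ and $m=1$ gives $P_0\prec_N D\bar\otimes L(\{1\})=D$. Carrying intertwinings through the successive corners is done at each step via Lemma \ref{lemma.intertwining}.
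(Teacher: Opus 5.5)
Your proposal is correct and is essentially the paper's argument. You apply Lemma \ref{lemma.classification} repeatedly, with three possible outcomes: the $\Sigma_j$-outcome is absorbed into the base (so $s$ drops by one and $2n+m$ by two), the $G_{j,k}$-outcome trades one $\mathcal C_{AFP}$ factor for two relatively solid ones, and the $\tilde G^0_{\widehat i}$-outcome drops one relatively solid factor. Your lexicographic induction on $(n,2n+m)$ simply makes precise the paper's ``induction on $s$, with case (ii) handled by reducing to $n=0$''. The equality case also matches the paper: under $s=2n+m$ the $\Sigma_j$-outcome is excluded, so the base stays $D$, and the terminal step $n=0$, $m=1$ gives $P_0\prec_N D$.
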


{\it Proof.} Once again, without any loss of generality, we may assume that $t=1$. The proof follows by induction over $s$.
By assumption there exist projections $p\in P_0\vee P_1\vee \dots\vee P_s$ and $r\in D\bar\otimes M$, a non-zero partial isometry $v\in rNp$ and a $*$-homomorphism $\theta: p(P_0\vee P_1\vee \dots\vee P_s)p\to r(D\bar\otimes M)r$ such that $\theta(x)v=vx$ for any $x\in p(P_0\vee P_1\vee \dots\vee P_s)p$. Since $P_s$ is a II$_1$ factor, we may assume by \cite[Lemma 4.5]{CdSS17} that $p\in P_s$. By Lemma \ref{lemma.classification}, one of the following holds:
\begin{itemize}
    \item [(i)] $P_0\vee P_1\vee \dots\vee P_{s-1} \prec_{N} D\bar\otimes L(\tilde G_{\widehat j} \times \Sigma_j)\bar\otimes L(\tilde G^0) $ for some $j\in\{1,\dots,n\}$, or

    \item [(ii)] $P_0\vee P_1\vee \dots\vee P_{s} \prec_{N} D\bar\otimes L(\tilde G_{\widehat j} \times G_{j,k})\bar\otimes L(\tilde G^0)$ for some $j\in\{1,\dots,n\}$ and $k\in\{1,2\}$, or

    \item [(iii)] $P_0\vee P_1\vee \dots\vee P_{s-1}\prec_{N} D\bar\otimes L(\tilde G) \bar\otimes L(\tilde G^0_{\widehat i})$ for some $i\in \{1,\dots,m\}$.
\end{itemize}

In the case (i), by induction we get that $s-1\leq 2(n-1)+m$, which proves the lemma. In the case (ii), the conclusion follows by repeating the same arguments finitely many times since we will reduce to the case in which $n=0$ and this can be settled by a separate inductive argument.
In the case (iii), by induction we get $s-1\leq 2n+(m-1)$, which proves the first part of the lemma.

 To prove the moreover part, assume now that $s=2n+m$. By using the first part of the proof together with multiple applications of Lemma \ref{lemma.rs} and Corollary \ref{corollary.classification}, we must get that $P_0 \prec_{N} D$ since the case (a.1) above can never happen.
\hfill$\blacksquare$

We continue with the following lemma, which is instrumental in applying the ultrapower technique from \cite{Io11} to obtain non-amenable commuting subgroups in the mysterious group $H$.

\begin{lemma}\label{lemma.ultrapower} Assume $m=0$ and $M=N$.
For all $i\in \{1,\dots,n\}$ and $ j,k \in \{1,2\}$, there exist $a\in \{1,\dots,n\}$ and $ b,c \in \{1,2\}$ such that 
$$
\Delta(L(G_{i,j}^k\times G_{\widehat i})^t)\prec_{M^t\bar\otimes M^t} M^t\bar\otimes L(G_{a,b}^c\times G_{\widehat a})^t.
$$

\end{lemma}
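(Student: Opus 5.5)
We work inside $M^t\bar\otimes M^t$, with $D=M^t$ playing the role of the auxiliary algebra in Notation \ref{notation}, and $M^t\bar\otimes M^t$ identified with $D\bar\otimes M^t$. This is legitimate since $N=M^t=L(H)$, so $\Delta(N)\subset N\bar\otimes N$, and $M^t\bar\otimes M^t = D\bar\otimes M^t$ with $n$ groups in $\mathcal C_{AFP}$ and $m=0$.

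Fix $i,j,k$. Consider the commuting subfactors of $M^t$ coming from the decomposition of $G$ into $2n$ pieces of the form $G_{i',j'}^{k'}$ at the ``vertex-group level''. Concretely, set $P_0=\Delta(L(G_{i,j}^k\times G_{\widehat i})^t)$. For $P_1$ take $\Delta$ of a copy of $L(G_{i,j}^{k'})$ with $k'\ne k$, i.e. the complementary factor inside $L(G_{i,j})$. These commute, and $P_1$ is a non-amenable II$_1$ factor.

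First I check that $P_1$ is strongly non-amenable relative to $D=M^t\otimes 1$ inside $M^t\bar\otimes M^t$. This follows from the standard fact (from \cite{PV09}/\cite{Io11}, as used in \cite{CD-AD20}) that if $Q\subset M$ has no amenable direct summand, then $\Delta(Q)$ is strongly non-amenable relative to $M\otimes 1$. The hypotheses of Lemma \ref{lemma.classification} then hold trivially, since $P_0\vee P_1\subset D\bar\otimes M^t$. That lemma gives two cases. In case (1), $P_0\prec D\bar\otimes L(G_{\widehat a}\times\Sigma_a)^t$. In case (2), $P_0\vee P_1\prec D\bar\otimes L(G_{\widehat a}\times G_{a,b})^t$.

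Case (2) is the harder one to finish, because we want the smaller target $L(G_{a,b}^c\times G_{\widehat a})$. Here I use the product structure $G_{a,b}=G_{a,b}^1\times G_{a,b}^2$ with both factors relatively solid. Apply Lemma \ref{lemma.rs} with $N_0=D\bar\otimes L(G_{\widehat a}\times G_{a,b}^{c'})$ and $\Gamma=G_{a,b}^{c}$, taking $Q_1=P_0$ and $Q_2=P_1$. If $P_1$ is strongly non-amenable relative to this $N_0$, we get $P_0\prec D\bar\otimes L(G_{\widehat a}\times G_{a,b}^{c'})$, which is the conclusion. Otherwise $P_1$ is amenable relative to $D\bar\otimes L(G_{\widehat a}\times G_{a,b}^{c'})$ on some corner, and we swap the roles of $c$ and $c'$. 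If $P_1$ were amenable relative to both, then by \cite[Proposition 2.7]{PV11}-type intersection of relative amenability (as in \cite[Lemma 2.8]{DHI16}) it would be amenable relative to $D\bar\otimes L(G_{\widehat a})$. That is where a counting argument via Lemma \ref{lemma.count} enters: $P_0$ contains $2n-1$ commuting non-amenable factors $\Delta(L(G_{i',j'}^{k''}))$ together with $P_1$. Pushing all $2n$ of them into $D\bar\otimes L(G_{\widehat a})$, which supports at most $2(n-1)$ such factors, gives a contradiction.

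Case (1) should be ruled out. By Lemma \ref{lemma.augm}-style arguments, $P_0\prec D\bar\otimes L(G_{\widehat a}\times\Sigma_a)$ would force all $2n-1$ strongly nonamenable commuting factors inside $P_0$, plus $P_1$, to live in a product where $\Sigma_a$ is amenable. Lemma \ref{lemma.count} applied to $D\bar\otimes L(G_{\widehat a})\bar\otimes L(\Sigma_a)$ allows only $2(n-1)$ such factors, again a contradiction. Here I would need to absorb the amenable $\Sigma_a$ into $D$, i.e. work with $D\bar\otimes L(\Sigma_a)$, and verify that strong non-amenability relative to $D$ passes to relative to $D\bar\otimes L(\Sigma_a)$, using the amenability of $\Sigma_a$ (\cite[Lemma 2.6]{DHI16}).

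The main obstacle I expect is the bookkeeping in the counting steps. Each $\Delta(L(G_{i',j'}^{k''}))$ must remain strongly non-amenable relative to the enlarged $D$. Moreover, the intertwining of the join must be transferred to a statement to which Lemma \ref{lemma.count} applies, using \cite[Lemma 4.5]{CdSS17} to put the projection in a factor.
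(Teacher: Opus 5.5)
Your first step matches the paper's. You apply Lemma \ref{lemma.classification} (i.e.\ Corollary \ref{corollary.classification} plus \cite[Lemma 2.8]{DHI16}) to $P_0=\Delta(L(G_{i,j}^k\times G_{\widehat i}))$ and $P_1=\Delta(L(G_{i,j}^{k'}))$. Your disposal of the $\Sigma_a$ alternative by Lemma \ref{lemma.count} is also fine: $P_0$ alone contains $2n-1>2(n-1)$ commuting factors, so you neither have nor need an intertwining of $P_1$ there.

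\textbf{The gap is in the descent from $G_{a,b}$ to $G_{a,b}^c$.} After using Lemma \ref{lemma.rs} for $c$ and for $c'$, you are left with (a corner of) $P_1$ amenable relative to $D\bar\otimes L(G_{\widehat a})$. You claim this contradicts Lemma \ref{lemma.count} by ``pushing all $2n$ factors into $D\bar\otimes L(G_{\widehat a})$''. But Lemma \ref{lemma.count} needs an \emph{intertwining} of the join as input, and relative amenability of $P_1$ produces none. The only intertwining you have is into $D\bar\otimes L(G_{a,b}\times G_{\widehat a})$, which holds exactly $2+2(n-1)=2n$ factors, so counting gives nothing. Nor is the situation absurd: it just says the non-amenability of $P_1$ lives in a coordinate $a'\neq a$. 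For $n=1$ you do get a contradiction, since $D\bar\otimes L(G_{\widehat a})=D$, so as written your argument only covers $n=1$.

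What is missing is the analysis of that case. Since $P_1$ is strongly non-amenable relative to $D$, \cite[Lemma 2.8]{DHI16} gives $a'\neq a$ with $P_1$ non-amenable relative to $D\bar\otimes L(G_{a,b}\times G_{\widehat{\{a,a'\}}})$. Corollary \ref{corollary.classification} for the amalgam $G_{a'}$ then yields two further alternatives. Re-applying Lemma \ref{lemma.classification} with $\tilde G=G_{\widehat a}$ and $\tilde G^0=G_{a,b}^1\times G_{a,b}^2$ makes these visible and also handles your corner bookkeeping.

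\begin{enumerate}
\item $P_0\prec D\bar\otimes L(G_{a,b}\times\Sigma_{a'}\times G_{\widehat{\{a,a'\}}})$. This is killed by Lemma \ref{lemma.count}, since $2n-1>2+2(n-2)$.
\item $P_0\vee P_1\prec D\bar\otimes L(G_{a,b}\times G_{a',\ell}\times G_{\widehat{\{a,a'\}}})$. This again has exactly $2n$ slots, so it needs a new idea, which the paper supplies:
\begin{itemize}
\item $\Delta(L(G_{\widehat i}))\subset P_0$ is normalized by $\Delta(M)$.
\item By \cite[Lemma 7.2]{IPV10} (infinite index), $\Delta(M)$ embeds into neither $M\bar\otimes L(G_{a,b}\times G_{\widehat a})$ nor $M\bar\otimes L(G_{a',\ell}\times G_{\widehat{a'}})$.
\item So Lemma \ref{lemma.normalizer}, applied in both amalgams $G_a$ and $G_{a'}$, forces $\Delta(L(G_{\widehat i}))$ to embed into $M\bar\otimes L(\Sigma_a\times G_{\widehat a})$ and into $M\bar\otimes L(\Sigma_{a'}\times G_{\widehat{a'}})$.
\item Since $\Delta(M)'\cap M\bar\otimes M=\mathbb C1$, \cite[Lemma 2.4]{DHI16} upgrades these to strong intertwinings. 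Intersecting gives $\Delta(L(G_{\widehat i}))\prec M\bar\otimes L(\Sigma_a\times\Sigma_{a'}\times G_{\widehat{\{a,a'\}}})$.
\item Lemma \ref{lemma.count} then yields $2(n-1)\le 2(n-2)$, a contradiction.
\end{itemize}
\end{enumerate}

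Only after both alternatives are excluded does your relative-solidity step give the stated conclusion.
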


{\it Proof.} For ease of notation, we assume $t=1$. Without loss of generality, we may assume that $i=j=k=1$.
Throughout the proof we use \cite[Proposition 7.2]{IPV10} which states that if $P\subset M$ is a non-amenable subfactor, then $\Delta(P)$ is strongly non-amenable relative to $M\otimes 1$ inside $M\bar\otimes M$. 

Since $G_{1,1}^2$ is a non-amenable icc group, Corollary \ref{corollary.classification} together with \cite[Lemma 2.8]{DHI16} imply that there exist $a\in\{1,\dots,n\}$ and $b\in \{1,2\}$ such that
$$
\Delta(L(G_{1,1}^1\times G_{\widehat 1}))\prec_{M\bar\otimes M} M\bar\otimes L(G_{a,b}\times G_{\widehat a}).
$$
By Lemma \ref{lemma.normalizer}, we get that one of the following holds:
\begin{enumerate}
    \item $
\Delta(L(G_{1,1}\times G_{\widehat 1}))\prec_{M\bar\otimes M} M\bar\otimes L(G_{a,b}\times G_{\widehat a})
$, or

    \item $\Delta(L(G_{1,1}^1\times G_{\widehat 1}))\prec_{M\bar\otimes M} M\bar\otimes L(\Sigma_{a}\times G_{\widehat a})$.
\end{enumerate}

The second option is impossible by Lemma \ref{lemma.count} since we would get $2(n-1)+1\leq 2(n-1)$. Assume now that the first option holds. By Lemma \ref{lemma.classification}, we get that one of the following holds:

\begin{enumerate}
    \item [(i)]
$
 \Delta(G_{1,1}^1 \times G_{\widehat 1}))\prec_{M\bar\otimes M} M\bar\otimes L(G_{a,b}^{\alpha} \times G_{\widehat a})$, or

    \item [(ii)] $
\Delta(G_{1,1}^1 \times G_{\widehat 1}))\prec_{M\bar\otimes M} M\bar\otimes L(G_{a,b}\times G_{\widehat {\{a,a'\}}}\times \Sigma_{a'})$, or

    \item [(iii)] $\Delta(G_{1,1} \times G_{\widehat 1}))\prec_{M\bar\otimes M} M\bar\otimes L(G_{a,b}\times G_{\widehat {\{a,a'\}}}\times G_{a', k})$.
\end{enumerate}

Note that if $n=1$, then (i) must hold.

In the case (i), we are done. In the case (ii), Lemma \ref{lemma.count} gives that $1+2(n-1)\leq 2+2(n-2)$, contradiction.  In the case (iii), we note that Lemma \ref{lemma.normalizer} and \cite[Proposition 7.2]{IPV10} imply that $\Delta(G_{\widehat 1}))\prec_{M\bar\otimes M} M\bar\otimes L(\Sigma_1\times G_{\widehat {\{a,a'\}}}\times \Sigma_{a'})$. By Lemma \ref{lemma.count}, we get $2(n-1)\leq 2(n-2)$, which again is a contradiction. 
\hfill$\blacksquare$

\section{Product rigidity for groups in $\mathcal C_{AFP}$ and relatively solid groups}\label{section.product.rigidity}

The goal of this section is to prove the following product rigidity result, in the sense of \cite{CdSS15}, which will be used in the proof of Theorem~\ref{B}.
 We are showing that the von Neumann algebra $L(\tilde G\times\tilde G^0)$ remembers the product structure whenever $\tilde G$ is a product of groups from $\mathcal C_{AFP}$ and $\tilde G^0$ is a product of icc relatively solid groups.

\begin{theorem}\label{theorem.product.rigidity}
Let  $G=\tilde G\times\tilde G^0$, where $\tilde G=G_1\times\dots\times G_n$ is a product of $n\ge 1$ countable groups that belong to $\mathcal C_{AFP}$ and $\tilde G^0=G_1^0\times\dots\times G_m^0$ is a product of $m\ge 1$ icc relatively solid groups.

Denote $M=L(G)$ and let $H$ be any countable group such that $M^t=L(H)$ for some $t>0$.

Then there exist a product decomposition $H=\tilde H\times\tilde H^0$, a unitary $u\in\mathcal U(M)$ and positive numbers $t_1,t_2$ with $t_1t_2=t$ such that $uL(\tilde H)u^*=L(\tilde G)^{t_1}$ and $uL(\tilde H^0)u^*=L(\tilde G^0)^{t_2}$.

\end{theorem}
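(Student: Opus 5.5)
The plan is to follow the strategy of \cite{CdSS15} and \cite{DHI16}, via Popa--Vaes comultiplication and Ioana's ultrapower technique \cite{Io11}. We work with $\Delta: M^t=L(H)\to M^t\bar\otimes M^t$, $\Delta(v_h)=v_h\otimes v_h$, as in Notation~\ref{notation}, taking $N=M^t$ and $D=M^t$ or $D=\mathbb C$ as needed. The goal is to find commuting subalgebras $L(\tilde G)^{t_1}$ and $L(\tilde G^0)^{t_2}$ whose images under $\Delta$ intertwine into $M^t\bar\otimes L(\tilde G)^{t_1}$ and $M^t\bar\otimes L(\tilde G^0)^{t_2}$ respectively.

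\textbf{Step 1: locating $\Delta(L(\tilde G^0))$.} For each $i$, $\Delta(L(G^0_{\widehat i}\times\tilde G))$ and $\Delta(L(G^0_i))$ commute. By \cite[Proposition 7.2]{IPV10}, $\Delta$ of a non-amenable subfactor is strongly non-amenable relative to $M\otimes1$. Lemma~\ref{lemma.classification} with $D=M^t$ (and Lemma~\ref{lemma.rs} for the relatively solid factors) then gives one of three alternatives:
\begin{itemize}
\item[(a)] the intertwining $\Delta(L(G^0_{\widehat i}\times\tilde G))\prec M\bar\otimes L(\cdots\times \Sigma_j\times\cdots)$ or $\prec M\bar\otimes L(\tilde G\times\tilde G^0_{\widehat k})$;
\item[(b)] an intertwining into $M\bar\otimes L(\tilde G_{\widehat j}\times G_{j,k}\times\tilde G^0)$.
\end{itemize}
We use the counting Lemma~\ref{lemma.count} to discard the wrong alternatives. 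The algebra $L(\tilde G\times\tilde G^0)$ contains exactly $2n+m$ commuting non-amenable factors, namely $L(G^{c}_{j,k})$ after the normalizer reduction of Lemma~\ref{lemma.normalizer}, together with the $L(G^0_i)$. Any intertwining into a smaller algebra that is incompatible with this count is then contradictory, exactly as in the proof of Lemma~\ref{lemma.ultrapower}. Iterating this, we aim to show that $\Delta(L(\tilde G^0_{\widehat i}\times\tilde G))\prec M\bar\otimes L(\tilde G^0_{\widehat k}\times\tilde G)$ for some $k$. Symmetrically, for each $j$, $\Delta(L(\tilde G_{\widehat j}\times\tilde G^0))$ intertwines into an algebra of the form $M\bar\otimes L(\tilde G_{\widehat a}\times\Sigma_a\times\ldots)$ modulo the normalizer reduction. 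Taking intersections over these statements (via \cite[Lemma 2.8]{DHI16} and the strong intertwining obtained from regularity), we expect to get
\[
\Delta(L(\tilde G^0))\prec^s M\bar\otimes L(\tilde G^0)\quad\text{and}\quad \Delta(L(\tilde G))\prec^s M\bar\otimes L(\tilde G).
\]

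\textbf{Step 2: obtaining the subgroups.} Apply \cite[Theorem 4.1]{DHI16} (the ultrapower argument of \cite{Io11} in the form used in \cite{CdSS15}). From $\Delta(L(\tilde G^0))\prec M\bar\otimes L(\tilde G^0)$ we obtain a subgroup $\Sigma^0<H$ with non-amenable centralizer, such that $L(\tilde G^0)\prec L(\Sigma^0)$ and $L(C_H(\Sigma^0))\prec L(\tilde G)$ after the usual manipulations. Then \cite[Theorem 4.3]{DHI16}, whose hypotheses use that $\tilde G,\tilde G^0$ are icc and that the commutant of $L(\tilde G^0)$ in $M$ is $L(\tilde G)$, yields commuting normal subgroups $\tilde H,\tilde H^0$ with $H=\tilde H\times\tilde H^0$. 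It also yields a unitary $u$ and $t_1t_2=t$ with $uL(\tilde H)u^*=L(\tilde G)^{t_1}$ and $uL(\tilde H^0)u^*=L(\tilde G^0)^{t_2}$.

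The main obstacle is Step 1: showing that $\Delta$ cannot send $L(\tilde G^0)$ into pieces involving the amalgamated product factors, and vice versa. This is delicate because a relatively solid factor $L(G^0_i)$ might a priori land inside some $L(G_{j,k}^c)$-type piece, since both kinds of factor are "solid-like". I would first try to rule this out with the exact count of Lemma~\ref{lemma.count}, applied to the full family of $2n+m$ commuting factors together with its ``moreover'' clause $P_0\prec D$. If the count alone does not separate the two types, the fallback is to use the distinguishing feature of $\mathcal C_{AFP}$: each $G_j$ is itself not a product, but it contains the amenable icc $\Sigma_j$, whose normalizer structure (Lemma~\ref{lemma.normalizer}) forces the pair $G_{j,1}^1,G_{j,1}^2$ to travel together. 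A relatively solid $G^0_i$ carries no such paired structure, and this mismatch should produce a contradiction.
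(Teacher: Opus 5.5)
Your overall architecture matches the paper's. You locate $\Delta(L(\tilde G^0))$ and the $\tilde G$-side under comultiplication, extract subgroups of $H$ by the ultrapower technique, and conclude with a product criterion. However, Step 1, which you correctly flag as the crux, is never carried out, and the tool you rely on cannot do it. The counting lemma (Lemma~\ref{lemma.count}) gives no contradiction in exactly the alternatives that matter. The reason is that a factor $G_{j,k}=G_{j,k}^1\times G_{j,k}^2$ and a relatively solid coordinate contribute to the count just as the pieces they would replace. Two examples:
\begin{itemize}
\item Applying Lemma~\ref{lemma.classification} with $P_1=\Delta(L(G_i^0))$ leaves open $\Delta(M)\prec M\bar\otimes L(\tilde G_{\widehat j}\times G_{j,k}\times\tilde G^0)$, where both sides have count $2n+m$.
\item Applying it with $P_1=\Delta(L(G^1_{j_0,1}))$ leaves open $\Delta(L(G^2_{j_0,1}\times\tilde G_{\widehat{j_0}}\times\tilde G^0))\prec M\bar\otimes L(\tilde G\times\tilde G^0_{\widehat i})$. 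Here a relatively solid coordinate absorbs an AFP piece, and both sides have count $2n+m-1$. The ``moreover'' clause does not help, since no further commuting algebra is jointly intertwined.
\end{itemize}
Your fallback is not viable either. Lemma~\ref{lemma.normalizer} needs an amalgamated free product decomposition of the target, and $G_i^0$ has none, so there is no ``paired structure'' mechanism available on that side.

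The missing idea is a global obstruction: by \cite[Lemma 7.2]{IPV10}, $\Delta(M)$ cannot intertwine into $M\bar\otimes L(K)$ for an infinite-index subgroup $K<G$. This kills the first alternative directly. For the second, the paper works in a specific order.
\begin{itemize}
\item It first proves, by counting plus this obstruction, that $\Delta(L(\tilde G\times\tilde G^0_{\widehat i}))\prec M\bar\otimes L(\tilde G\times\tilde G^0_{\widehat{i'}})$. This supplies the intertwining of $\Delta(L(\tilde G))$ into $M\bar\otimes L(\tilde G\times\tilde G^0_{\widehat i})$.
\item Combined with $\Delta(L(\tilde G^0))\prec M\bar\otimes L(\tilde G\times\tilde G^0_{\widehat i})$, \cite[Lemma 2.6]{Is19} glues the two commuting pieces into $\Delta(M)\prec M\bar\otimes L(\tilde G\times\tilde G^0_{\widehat i})$, contradicting \cite[Lemma 7.2]{IPV10}.
\item The same gluing shows that the index map $f$, with $\Delta(L(\tilde G_{\widehat k}\times\tilde G^0))\prec M\bar\otimes L(\tilde G_{\widehat{f(k)}}\times\tilde G^0)$, is injective and hence bijective.
\item Bijectivity is what makes the intersection via \cite[Lemma 2.8]{DHI16} land in $M\bar\otimes L(\tilde G^0)$.
\end{itemize}
Your stated targets of the form $M\bar\otimes L(\tilde G_{\widehat a}\times\Sigma_a\times\cdots)$ are not the right ones, and intersecting them would not isolate $\tilde G^0$.

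Step 2 is also underspecified. The product criterion the paper uses, \cite[Theorem 2.3]{Dr20}, needs two-sided intertwinings, one of them strong: $L(\tilde G)\prec L(\tilde\Theta)$ and $L(\tilde\Theta)\prec^s L(\tilde G)$. The paper obtains these in two moves. It upgrades $L(\Theta_{\widehat i})\prec L(\tilde G\times\tilde G^0_{\widehat{i'}})$ to a strong intertwining, using the non-amenable centralizer, Lemma~\ref{lemma.classification}, Lemma~\ref{lemma.augm} and counting, and then intersects conjugates. The single relation $L(C_H(\Sigma^0))\prec L(\tilde G)$ that you write down is not sufficient for this.
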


{\it Proof.} For ease of notation, we only consider the case when $t=1$. Let $\Delta:M\to M\bar\otimes M$ the $*$-homomorphism given by $\Delta(v_h)=v_h\otimes v_h$ for $h\in H$.  The proof is divided between three claims.

{\bf Claim 1.} For any $i\in \{1,\dots,m\}$, the following hold:
\begin{itemize}
    \item [(a)] $\Delta(L(\tilde G\times \tilde G^0_{\widehat i}))\prec M\bar\otimes L(\tilde G\times \tilde G^0_{\widehat i})$,

    \item [(b)] There is a subgroup $\Theta_{\widehat i}<H$ satisfying
$ L(\tilde G\times \tilde G^0_{\widehat i})\prec^s_M L(\Theta_{\widehat i})\text{ and } L(\Theta_{\widehat i})\prec_{M}^s L(\tilde G\times \tilde G^0_{\widehat i})$.
\end{itemize}

{\it Proof of Claim 1.} Let $i\in\{1,\dots,m\}$. Since $G_i^0$ is icc non-amenable, Lemma \ref{lemma.classification} implies that one of the following holds:
\begin{enumerate}
    \item [(i)] $\Delta(L(\tilde G\times \tilde G^0_{\widehat i}))\prec M\bar\otimes L(\Sigma_j \times \tilde G_{\widehat j} \times \tilde G^0)$ for some $j\in\{1,\dots,n\}$, or

    \item [(ii)] $\Delta(L(G))\prec M\bar\otimes L(G_{j,k} \times \tilde G_{\widehat j} \times \tilde G^0)$ for some $j\in\{1,\dots,n\}$ and $k\in\{1,2\}$, or

    \item [(iii)] $\Delta(L(\tilde G\times \tilde G^0_{\widehat i}))\prec M\bar\otimes L(\tilde G\times \tilde G^0_{\widehat {i'}})$ for some $i'\in \{1,\dots,m\}$.
\end{enumerate}

If (i) holds, then Lemma \ref{lemma.count} implies that $2n+m-1\leq 2(n-1)+m$, which is impossible. If (ii) holds, \cite[Lemma 7.2]{IPV10} implies that $[G_j:G_{j,k}]<\infty$, contradiction.
Thus, (iii) holds. By \cite[Theorem 4.2]{Dr20} there exists a subgroup $\Theta_{\widehat i}<H$ with non-amenable centralizer $C_H(\Theta_{\widehat i})$ such that
\begin{equation}\label{eq.double}
    L(\tilde G\times \tilde G^0_{\widehat i}) \prec_M L(\Theta_{\widehat i}) \text{ and } L(\Theta_{\widehat i})\prec_M L(\tilde G\times \tilde G^0_{\widehat {i'}}).
\end{equation}
We continue by showing that $ L(\Theta_{\widehat i})\prec^s_M L(\tilde G\times \tilde G^0_{\widehat {i'}})$. Let $z\in L(\Theta_{\widehat i}C_{H}(\Theta_{\widehat i}))'\cap M$ be a non-zero projection. Since $C_{H}(\Theta_{\widehat i})$ is non-amenable (see also \cite[Lemma 2.13]{CD-AD20}), we may apply Lemma \ref{lemma.classification} and get that  one of the following holds:
\begin{enumerate}
    \item [(iii.1)] $ L(\Theta_{\widehat i})z\prec_M L(G_{j,k}\times \tilde G_{\widehat j}\times \tilde G^0)$ for some $j \in \{1,\dots,n\}$ and $k\in\{1,2\}$, or

    \item [(iii.2)] $ L(\Theta_{\widehat i})z\prec_M L(\tilde G\times \tilde G^0_{\widehat {i''}})$ for some $i''\in \{1,\dots,m\}$.
\end{enumerate}

First, we note that (iii.1) cannot hold. Otherwise, by using Lemma \ref{lemma.augm} we get $\Delta(L(\tilde G\times \tilde G^0_{\widehat i}) )\prec_{M\bar\otimes M} M\bar\otimes L(G_{j,k}\times \tilde G_{\widehat j}\times \tilde G^0)$. By Lemma \ref{lemma.normalizer} and \cite[Lemma 7.2]{IPV10}, we get that 
$\Delta(L(\tilde G\times \tilde G^0_{\widehat i}) )\prec_{M\bar\otimes M} M\bar\otimes L(\Sigma_j\times \tilde G_{\widehat j}\times \tilde G^0)$. By Lemma \ref{lemma.count}, we deduce that $2n+m-1\leq 2(n-1)+m$, contradiction.

Hence, (iii.2) holds. Note that $i'=i''$. Otherwise, by Lemma \ref{lemma.augm} we get that
\begin{equation}\label{eq.2}
\Delta(L(\tilde G\times \tilde G^0_{\widehat i}))\prec M\bar\otimes L(\tilde G\times \tilde G^0_{\widehat {i''}})   \,. 
\end{equation}
Note that $\mathcal N_{M\bar\otimes M}(\Delta (L(\tilde G\times \tilde G^0_{\widehat i})))'\cap M\bar\otimes M=\mathbb C 1$.
By combining the intertwinings from (iii) and \eqref{eq.2} and using \cite[Lemma~2.4]{DHI16}, we get 
$
\Delta(L(\tilde G\times \tilde G^0_{\widehat i}))\prec M\bar\otimes L(\tilde G\times \tilde G^0_{\widehat {\{i',i''\}}}).
$
By applying Lemma \ref{lemma.count}, we get that $2n+m-1\leq 2n+m-2$, contradiction.
This shows that $i'=i''$, and hence, by \cite[Lemma~2.4]{DHI16} $ L(\Theta_{\widehat i})\prec^s_M L(\tilde G\times \tilde G^0_{\widehat {i'}})$.
\hfill$\square$

{\bf Claim 2.} $\Delta(L(\tilde G^0))\prec_{M\bar\otimes M}M\bar\otimes L(\tilde G^0)$.





{\it Proof of Claim 2.} 
Since $G_{j_0,1}^1$ is icc non-amenable, Lemma \ref{lemma.classification} implies that one of the following holds:
\begin{enumerate}
    \item [(i1)] $\Delta(L(G_{j_0,1}^2\times \tilde G_{\widehat {j_0}}\times \tilde G^0))\prec_{M\bar\otimes M} M\bar\otimes L(G_{j',\alpha'}\times \tilde G_{\widehat {j'}} \times \tilde G^0)$ for some $j'\in\{1,\dots,n\}$ and $\alpha'\in\{1,2\}$, or 

    \item [(i2)] $\Delta(L(G_{j_0,1}^2\times\tilde G_{\widehat {j_0}}\times \tilde G^0))\prec_{M\bar\otimes M} M\bar\otimes L(\tilde G \times \tilde G^0_{\widehat i'})$ for some $i\in\{1,\dots,m\}$.
\end{enumerate}

 If (i2) holds, we have $\Delta(L(\tilde G^0 ))\prec_{M\bar\otimes M} M\bar\otimes L(\tilde G\times \tilde G^0_{\widehat i})$. 
 Note that  $\mathcal N_{M\bar\otimes M}(\Delta(L(\tilde G^0)))'\cap M\bar\otimes M = \mathbb C 1$. 
 From Claim 1 it follows that $\Delta(L(\tilde G))\prec M\bar\otimes L(\tilde G\times \tilde G^0_{\widehat i})$
and so we can use \cite[Lemma 2.6]{Is19} to derive that $\Delta(M)\prec_{M\bar\otimes M} M\bar\otimes L(\tilde G\times \tilde G^0_{\widehat i})$. By \cite[Lemma 7.2]{IPV10}, we get that $G_i^0$ is a finite group, contradiction.
 
 Therefore, (i1) holds. By Lemma \ref{lemma.normalizer} it follows that
\begin{enumerate}
    \item [(i1.1)] $\Delta(L(G_{j_0,1}^2\times \tilde G_{\widehat {j_0}}\times \tilde G^0))\prec_{M\bar\otimes M} M\bar\otimes L(\Sigma_{j'}\times \tilde G_{\widehat {j'}} \times \tilde G^0)$, or 

    \item [(i1.2)] $\Delta(L(G_{j_0,1}\times\tilde G_{\widehat {j_0}}\times \tilde G^0))\prec_{M\bar\otimes M} M\bar\otimes L(G_{j',\alpha'}\times \tilde G_{\widehat {j'}} \times \tilde G^0)$.

\end{enumerate}

Lemma \ref{lemma.count} implies that (i1.1) cannot hold, thus (i1.2) must hold. Since $G_{j_0,1}^2$ is icc non-amenable, Lemma \ref{lemma.classification} implies that one of the following holds:
\begin{enumerate}
    \item [(i1.2.1)] $\Delta(L(G_{j_0,1}^1\times \tilde G_{\widehat {j_0}}\times \tilde G^0))\prec_{M\bar\otimes M} M\bar\otimes L(G_{j',\alpha'}^{\beta'}\times \tilde G_{\widehat {j'}} \times \tilde G^0)$ for some $\beta'\in\{1,2\}$, or 

    \item [(i1.2.2)] $\Delta(L(G_{j_0,1}^1\times\tilde G_{\widehat {j_0}}\times \tilde G^0))\prec_{M\bar\otimes M} M\bar\otimes L(G_{j',\alpha'}\times G_{j'',\alpha ''}\times \tilde G_{\widehat {\{j',j''\}}} \times \tilde G^0)$ for some $j''\in \{1,\dots,n\}$ with $j''\neq j'$ and $\alpha''\in\{1,2\}$, or

    \item [(i1.2.3)] $\Delta(L(G_{j_0,1}^1\times\tilde G_{\widehat {j_0}}\times \tilde G^0))\prec_{M\bar\otimes M} M\bar\otimes L(G_{j',\alpha'}\times \tilde G_{\widehat {j'}} \times \tilde G^0_{\widehat{i'}})$ for some $i'\in\{1,\dots,m\}$.

\end{enumerate}

If (i1.2.3) holds, we obtain a contradiction as in (i2) above. If (i1.2.2) holds, then $\Delta(L(\tilde G_{\widehat {j_0}}\times \tilde G^0))\prec_{M\bar\otimes M} M\bar\otimes L(G_{j',\alpha'}\times G_{j'',\alpha ''}\times \tilde G_{\widehat {\{j',j''\}}} \times \tilde G^0)$, and by Lemma \ref{lemma.normalizer} we further derive that $\Delta(L(\tilde G_{\widehat {j_0}}\times \tilde G^0))\prec_{M\bar\otimes M} M\bar\otimes L(\Sigma_{j'}\times \Sigma_{j''}\times \tilde G_{\widehat {\{j',j''\}}} \times \tilde G^0)$. Lemma \ref{lemma.count} implies that $2(n-1)+m \leq 2(n-2)+m$, contradiction. 

Therefore, (i1.2.1) holds. Since $G_{j_0,1}^1$ is icc non-amenable, Lemma \ref{lemma.classification} implies that one of the following holds:

\begin{itemize}
    \item [(a)] $\Delta( \tilde G_{\widehat {j_0}}\times \tilde G^0))\prec_{M\bar\otimes M} M\bar\otimes L( \tilde G_{\widehat {j'}} \times \tilde G^0)$, or

    \item [(b)] $\Delta( \tilde G_{\widehat {j_0}}\times \tilde G^0))\prec_{M\bar\otimes M} L(G_{j',\alpha'}^{\beta'}\times G_{j'',\alpha''}\times\tilde G_{\widehat {\{j',j''\}}} \times \tilde G^0)$, for some $j''\in \{1,\dots,n\}$ with $j''\neq j'$ and $\alpha''\in\{1,2\}$, or

    \item [(c)] $\Delta( \tilde G_{\widehat {j_0}}\times \tilde G^0))\prec_{M\bar\otimes M} M\bar\otimes L(G_{j',\alpha'}^{\beta'}\times \tilde G_{\widehat {j'}} \times \tilde G^0_{\widehat i})$ for some $i\in\{1,\dots,m\}$.
\end{itemize}

If (c) holds, then we get a contradiction as in the case (i2) above. If (b) holds, then Lemma \ref{lemma.normalizer} implies that $\Delta( \tilde G_{\widehat {j_0}}\times \tilde G^0))\prec_{M\bar\otimes M} L(G_{j',\alpha'}^{\beta'}\times \Sigma_{j''}\times\tilde G_{\widehat {\{j',j''\}}} \times \tilde G^0)$ and by Lemma \ref{lemma.count} it follows that $2(n-1)+m \leq 1 + 2(n-2)+m$, contradiction. Therefore, (a) holds. 

It follows that there exists a function $f:\{1,\dots,n\}\to \{1,\dots,n\}$ such that
$$
\Delta(L(\tilde G_{\widehat {k}}\times \tilde G^0))\prec_{M\bar\otimes M} M\bar\otimes L( \tilde G_{\widehat {f(k)}} \times \tilde G^0),
$$
for any $k\in \{1,\dots,n\}$. By \cite[Lemma 2.6]{Is19} and \cite[Lemma 7.2]{IPV10}, we get that $f$ is injective, hence it is bijective. By \cite[Lemma 2.8]{DHI16}, it follows that $\Delta(L(\tilde G^0))\prec_{M\bar\otimes M} M\bar\otimes L( \tilde G^0).$
\hfill$\square$

{\bf Claim 3.} There exist subgroups $\tilde\Theta,\tilde\Theta^0<H$ satisfying
\begin{enumerate}
    \item [(a)] $L(\tilde G)\prec_{M} L(\tilde \Theta)$ and $L(\tilde \Theta)\prec^s_M L(\tilde G)$,

    \item [(b)] $L(\tilde G^0)\prec_{M} L(\tilde \Theta^0)$ and $L(\tilde \Theta^0)\prec_M L(\tilde G^0)$.
\end{enumerate}

{\it Proof of Claim 3.} By Claim 1.(b) we get that $L(\tilde G)\prec^s_M L(\Theta_{\widehat i})$ for any $i\in\{1,\dots,n\}$. By applying repeatedly \cite[Lemma 2.7]{Va10b}, there exist $h_1,\dots,h_{n}\in H$ such that $L(\tilde G)\prec^s_M L(\tilde\Theta)$, where $\tilde\Theta=\cap_{i=1}^n h_i \Theta_{\widehat i} h^{-1}_i$. Note that Claim 1.(b) implies that $L(\tilde\Theta)\prec_{M}^s L(\tilde G\times \tilde G^0_{\widehat i})$, for any $i\in\{1,\dots,n\}$. By applying \cite[Lemma 2.8]{DHI16} we deduce that $L(\tilde\Theta)\prec_{M}^s L(\tilde G)$, proving (a).

The proof of (b) follows directly from Claim 2 and \cite[Theorem 4.2]{Dr20}.  
\hfill$\square$

Finally, Claim 3 allows to apply \cite[Theorem 2.3]{Dr20} (see also \cite[Theorem 6.1]{DHI16}) to derive the conclusion.
\hfill$\blacksquare$

\section{Identification of peripheral structure}\label{section.peripheral}
Let $G = G_1 \times \cdots \times G_n$  be a product of groups in the class $\mathcal{C}_{AFP}$. Fix an index $i \in \{1,\ldots,n\}$ and a subgroup $G_0 < G_i$. The main goal of this section is to build upon the methods developed in \cite{CI17, CD-AD20} to recover the subgroup $G_0 \times G_{\widehat{i}}$ at the von Neumann algebra level; see Theorem~\ref{theorem.peripheral} below.

Throughout this subsection we use the assumption from Notation \ref{notation} with $m=0$ and $M^t=N$.

\begin{theorem}\label{theorem.first.step}
Let $i\in \{1,\dots,n\}$. Then there exist a subgroup $\Theta<H$ with ${\rm QN}_{H}^{(1)}(\Theta)=\Theta$, a unitary $u\in \mathcal U(M)$, projections $r_1,r_2\in \mathcal Z(L(\Theta))$ with $r_1+r_2=1$ and $r_1\neq 0$ such that 
\begin{itemize}
    \item $u L(\Theta)r_1 u^*= ur_1u^* L(G_{i,1}\times G_{\widehat i})^t ur_1u^*$,
    \item $u L(\Theta)r_2 u^* \subset ur_2u^* L(G_{i,2}\times G_{\widehat i})^t ur_2u^*$.
\end{itemize}
\end{theorem}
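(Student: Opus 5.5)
We will run the comultiplication strategy of \cite{CD-AD20, Dr20}: first produce an intertwining for $\Delta$, then convert it via \cite[Theorem 4.2]{Dr20} into a subgroup of $H$, and finally upgrade intertwinings to honest inclusions using the amalgamated free product structure. Assume $t=1$ and $i=1$. We proceed in three steps.

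\textbf{Step 1: an intertwining for $\Delta$.} By Lemma \ref{lemma.ultrapower}, applied to each of $G_{1,1}^1$ and $G_{1,1}^2$, we get $\Delta(L(G_{1,1}^k\times G_{\widehat 1}))\prec M\bar\otimes L(G_{a,b}^c\times G_{\widehat a})$. Next we let the commuting non-amenable factor $L(G_{1,1}^{3-k})$ play the role of $P_1$ in Lemma \ref{lemma.classification}; by \cite[Proposition 7.2]{IPV10}, its image under $\Delta$ is strongly non-amenable relative to $M\otimes 1$. The counting Lemma \ref{lemma.count}, together with Lemma \ref{lemma.normalizer}, rules out the options landing in $\Sigma_a$ or splitting off another coordinate. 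This should yield
$$\Delta(L(G_{1,1}\times G_{\widehat 1}))\prec_{M\bar\otimes M} M\bar\otimes L(G_{a,b}\times G_{\widehat a})$$
for some $a\in\{1,\dots,n\}$ and $b\in\{1,2\}$. The case analysis mirrors the proof of Claim 2 in Theorem \ref{theorem.product.rigidity}.

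\textbf{Step 2: a subgroup of $H$.} Applying \cite[Theorem 4.2]{Dr20}, we obtain $\Theta_0<H$ with
$$L(G_{1,1}\times G_{\widehat 1})\prec_M L(\Theta_0)\quad\text{and}\quad L(\Theta_0)\prec_M L(G_{a,b}\times G_{\widehat a}).$$
We then set $\Theta$ to be the subgroup generated by ${\rm QN}^{(1)}_H(\Theta_0)$, iterating if necessary until ${\rm QN}^{(1)}_H(\Theta)=\Theta$. By Lemma \ref{lemma.qn.groups}, $L(\Theta)$ is then the von Neumann algebra generated by the one-sided quasi-normalizer. Note that $L(G_{1,1}\times G_{\widehat 1})\nprec L(\Sigma_1\times G_{\widehat 1})$, since $G_{1,1}$ is non-amenable. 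Hence, via Lemma \ref{lemma.normalizer} and Lemma \ref{lemma.augm}, we should get $a=1$ (using again the counting lemma) and $L(\Theta)\prec_M L(G_{1,b}\times G_{\widehat 1})$. The point is that quasi-normalizers of subalgebras sitting in a vertex but not in the edge algebra stay in the vertex. This is the amalgamated free product form of \cite[Corollary 2.10]{CI17}, applied in the first tensor factor with $G_{\widehat 1}$ carried along.

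\textbf{Step 3: the two corners, and the main obstacle.} We decompose $1=r_1+r_2$ with $r_1,r_2\in\mathcal Z(L(\Theta))$. Here $r_1$ is the maximal central projection with $L(\Theta)r_1\prec^s L(G_{1,1}\times G_{\widehat 1})$, and the complementary part, by the dichotomy just described, satisfies $L(\Theta)r_2\prec^s L(G_{1,2}\times G_{\widehat 1})$. The moreover part of Lemma \ref{lemma.normalizer}, applied to the amalgamated free product $G_1\times G_{\widehat1}=(G_{1,1}\times G_{\widehat1})*_{\Sigma_1\times G_{\widehat 1}}(G_{1,2}\times G_{\widehat 1})$, with icc vertex groups, provides a single unitary $u$ that conjugates both corners into the respective vertex algebras. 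This gives the inclusion for $r_2$.

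The main obstacle is the equality $uL(\Theta)r_1u^*=ur_1u^*L(G_{1,1}\times G_{\widehat1})ur_1u^*$, together with $r_1\neq0$. The plan is to prove the reverse containment by symmetry. From $L(G_{1,1}\times G_{\widehat 1})\prec L(\Theta)$, we rerun Steps 1--2 with the roles of $G$ and $H$ swapped; that is, we work inside $L(H)$, where the unitary $u$ places $L(\Theta)r_1$ as a corner of the vertex algebra. Then $ur_1u^*L(G_{1,1}\times G_{\widehat 1})ur_1u^*$ quasi-normalizes $uL(\Theta)r_1u^*$, and we use ${\rm QN}^{(1)}_H(\Theta)=\Theta$ with Lemma \ref{lemma.qn.algebras} to get the containment. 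Nonvanishing of $r_1$ comes from the first intertwining in Step 2, since it must land in the $G_{1,1}$-corner. The remaining difficulty is ensuring that the intertwining of $L(G_{1,1}\times G_{\widehat1})$ into $L(\Theta)$ is nonamenable relative to $\Sigma_1\times G_{\widehat1}$ at the level of each corner, so that the quasi-normalizer control applies.
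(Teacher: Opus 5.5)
There is a genuine gap, and it starts in Step 2.

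The ultrapower technique (\cite[Theorem 3.1]{Io11}, and its refinement \cite[Theorem 4.2]{Dr20}) extracts information about $H$ from $\Delta(P)\prec M\bar\otimes Q$ only through a non-amenable algebra commuting with the target $Q$. What it produces is a subgroup with \emph{non-amenable centralizer}. By upgrading in Step 1 to the target $L(G_{a,b}\times G_{\widehat a})$, you destroy exactly this. By the amalgamated free product structure and the icc assumptions, $L(G_{a,b}\times G_{\widehat a})'\cap M=\mathbb C1$. So nothing prevents the output from being $H$ itself, and the second intertwining $L(\Theta_0)\prec L(G_{a,b}\times G_{\widehat a})$ is unjustified.

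This is why Lemma \ref{lemma.ultrapower} is stated with $G^c_{a,b}$. The paper applies \cite[Theorem 3.1]{Io11} to $\Delta(L(G_{i,1}^1\times G_{\widehat i}))\prec M\bar\otimes L(G^c_{a,b}\times G_{\widehat a})$, whose target commutes with the non-amenable $L(G^{3-c}_{a,b})$. This gives a subgroup $\Delta<H$ with $C_H(\Delta)$ non-amenable and $L(G^1_{i,1}\times G_{\widehat i})\prec L(\Delta)$, and the paper then sets $\Theta=\langle{\rm QN}^{(1)}_H({\rm QN}_H(\Delta))\rangle$.

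The non-amenable centralizer is what lets Lemma \ref{lemma.classification} be applied to \emph{every} corner $L(\Theta)z$. Your Step 3 asserts that the complement of $r_1$ lands in $L(G_{1,2}\times G_{\widehat 1})$ ``by the dichotomy''. But you only have an intertwining of $L(\Theta)$ as a whole, which says nothing about individual corners. Also, iterating ${\rm QN}^{(1)}_H$ need not stabilize, and an increasing union need not satisfy ${\rm QN}^{(1)}_H(\Theta)=\Theta$. In the paper this property is an output of the argument, not an input.

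For the equality, swapping the roles of $G$ and $H$ is not available. Every structural lemma you invoke (Lemmas \ref{lemma.classification}, \ref{lemma.count}, \ref{lemma.normalizer}) uses the known structure of $G$, while $\Delta$ is built from $H$. Your claim that the corner of $L(G_{1,1}\times G_{\widehat1})$ quasi-normalizes $uL(\Theta)r_1u^*$ is therefore unsupported.

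The missing idea is a finite-index argument:
\begin{enumerate}
\item Once $uL(\Theta)r_1u^*\subset L(G_{i,1}\times G_{\widehat i})$, \cite[Corollary 2.10]{CI17} and \cite[Proposition 2.4]{CKP14} give $\theta:pL(G^1_{i,1}\times G_{\widehat i})p\to C\subset ueL(\Delta)r_1eu^*$ with $C\vee(C'\cap ueL(\Delta)r_1eu^*)$ of finite index.
\item The algebras $C$, $C'\cap ueL(\Delta)r_1eu^*$ and the non-amenable $uL(C_H(\Delta))r_1eu^*$ commute inside $L(G_{i,1}\times G_{\widehat i})$. So Lemmas \ref{lemma.classification} and \ref{lemma.count} force $C'\cap ueL(\Delta)r_1eu^*$ to have a minimal projection.
\item After compressing, $C\subset ueL(\Delta)r_1eu^*$ is a finite index inclusion of II$_1$ factors, so both have the same quasi-normalizer.
\item Since $\mathcal{QN}_M(L(G^1_{i,1}\times G_{\widehat i}))''=L(G_{i,1}\times G_{\widehat i})$ and $\mathcal{QN}_M(L(\Delta))''=L({\rm QN}_H(\Delta))$, this gives the corner equality $vv^*L(G_{i,1}\times G_{\widehat i})vv^*=vv^*uL(\Theta)r_1u^*vv^*$.
\item From this, $[\Theta:{\rm QN}_H(\Delta)]<\infty$, hence ${\rm QN}^{(1)}_H(\Theta)=\Theta$, and \cite[Lemma 2.6]{CI17} upgrades the corner equality to the full one.
\end{enumerate}
This is also why one works with the normal subgroup $G^1_{i,1}\times G_{\widehat i}$ rather than the whole vertex group: the vertex algebra is recovered as its quasi-normalizer.
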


{\it Proof.} Again, for ease of notation we assume $t=1$. By Lemma \ref{lemma.ultrapower} we have $\Delta(L(G_{i,1}^1\times G_{\widehat i}))\prec_{M\bar\otimes M} M\bar\otimes L(G_{a,b}^c\times G_{\widehat a})$ for some $a\in\{1,\dots,n\}$ and $b,c\in\{1,2\}$. By \cite[Theorem 3.1]{Io11} (see also \cite[Theorem 4.1]{DHI16} and \cite[Theorem 3.3]{CdSS15}), there exists a subgroup $\Delta<H$ with non-amenable centralizer $C_{H}(\Delta)$ such that $L(G_{i,1}^1\times G_{\widehat i})\prec_{M}L(\Delta)$. 

Define $\Theta$ as the subgroup of $H$ generated by ${\rm QN}^{(1)}_{H}({\rm QN}_{H}(\Delta))$. Let $r_2\in L(\Theta)'\cap L(H)$ be maximal projection with the property that $L(\Theta)r_2\prec_{M}^s L(G_{i,2}\times G_{\widehat i})$. Let $r_1=1-r_2$.

Next, from $L(G_{i,1}^1\times G_{\widehat i})\prec_{M}L(\Delta)$,
 \cite[Lemma 2.4]{DHI16} implies that there exists a non-zero projection $r_0\in L(\Delta)'\cap M$ such that $L(G_{i,1}^1\times G_{\widehat i})\prec_{M}L(\Delta)z$ for any non-zero projection $z\in L(\Delta)'\cap M$ with $z\leq r_0$. It follows that $r_0\leq r_1$. Indeed, if this is not the case by \cite[Lemma 3.7]{Va08} we would get $L(G_{i,1}^1)\prec_{L(G_i)}L(G_{i,2})$ which implies that $L(G_{i,1}^1)\prec_{L(G_i)} L(\Sigma_i)$. Since $L(G_{i,1}^1)$ is non-amenable II$_1$ factor and $\Sigma_i$ amenable, we get a contradiction. Thus, 
\begin{equation}\label{z0}
L(G_{i,1}^1\times G_{\widehat i})\prec_{M}L(\Delta)r_1.    
\end{equation}

{\bf Claim 1.} $L(\Theta)r_1\prec_{M}^s L(G_{i,1}\times G_{\widehat i})$.

{\it Proof of Claim 1.} Let $z\in L(\Theta C_H(\Theta))'\cap M$ be a non-zero projection with $z\leq r_1$. By \cite[Lemma 2.4]{DHI16} it is enough to show that $L(\Theta)z\prec_{M} L(G_{i,1}\times G_{\widehat i})$.
Since $C_H(\Theta)$ is non-amenable (see also \cite[Lemma 2.13]{CD-AD20}),  Lemma \ref{lemma.classification} implies that 
     $L(\Theta)z\prec_{M} L(G_{i',\alpha} \times G_{\widehat{i'}})$ for some $i'\in \{1,\dots,n\}$ and $\alpha\in \{1,2\}$.


We need to show that $i=i'$ and $\alpha=1$. Assume $i\neq i'$.
In combination with $L(G_{i,1}^1\times G_{\widehat i})\prec_{M}L(\Delta)$, Lemma \ref{lemma.augm} implies that $\Delta(L(G_{i,1}^1\times G_{\widehat i}))\prec_{M\bar\otimes M} M\bar\otimes L(G_{i',\alpha} \times G_{\widehat{i'}})$. Since $\mathcal N_{M\bar\otimes M}(\Delta(G_{\widehat i})))''\supset \Delta(M)$ and $\Delta(M)'\cap M\bar\otimes M=\mathbb C 1$, it follows from Lemma \ref{lemma.normalizer}, \cite[Lemma 7.2]{IPV10}, \cite[Lemma 2.4]{DHI16} and \cite[Proposition 2.5]{Dr19a}  that $\Delta(L(G_{\widehat i}))\prec^s_{M\bar\otimes M} M\bar\otimes L(\Sigma_i \times \Sigma_{i'} \times G_{\widehat {\{i,i'\}}})$. This contradicts Lemma \ref{lemma.count}. Hence $i=i'$. If we assume $\alpha=2$, it follows that $L(\Theta)z\prec_{M} L(G_{i,2}\times G_{\widehat i})$. This contradicts the maximality of $r_2$, hence $\alpha=1$.
\hfill$\square$

{\bf Claim 2.} There is a unitary $u\in\mathcal U(M)$ satisfying $u L(\Theta)r_k u^* \subset L(G_{i,k}\times G_{\widehat i})$, for any $k\in\{1,2\}$.

{\it Proof of Claim 2.} Let $k\in\{1,2\}$. Note that $L(\Theta)r_k\nprec_{M}L(\Sigma_i\times G_{\widehat i})$. Indeed, otherwise we get a contradiction by applying Lemma \ref{lemma.augm} and Lemma \ref{lemma.count}.
By applying Lemma \ref{lemma.normalizer} to Claim 1, there are unitaries $u_1,u_2\in \mathcal U(M)$ such that $u_k L(\Theta)r_k u_k^* \subset L(G_{i,k}\times G_{\widehat i})$, for any $k\in\{1,2\}$. Since $L(\Sigma_i)$ is a II$_1$ factor, we may assume by \cite[Lemma 4.5]{CdSS17} that $u_2r_2u_2^*\in L(\Sigma_i)$.
Since $L(G_{i,1}\times G_{\widehat i})$ is a II$_1$ factor and $\tau(u_1r_1u_1^*)=\tau(1-u_2r_2u_2^*)$, we may assume (up to replacing $u_1$ with a different unitary) that $u_1r_1u_1^*=1-u_2r_2u_2^*$.
Define $u=u_1r_1+u_2r_2$ and note that $u\in \mathcal U(M)$ satisfies the claim.
\hfill$\square$

From \eqref{z0}, we have $L(G_{i,1}^1\times G_{\widehat i})\prec_{M} uL(\Delta)r_1u^*$ and note that $L(G_{i,1}^1\times G_{\widehat i})\nprec_{M} L(\Sigma_i\times G_{\widehat i})$.
By \cite[Corollary 2.10]{CI17} (see also \cite[Theorem 1.1]{IPP05}) and 
 Claim 2, we derive that $L(G_{i,1}^1\times G_{\widehat i})\prec_{L(G_{i,1}\times G_{\widehat i})} u L(\Delta) r_1 u^*$.  By using \cite[Proposition 2.4]{CKP14}, and its proof we can find projections $p\in L(G_{i,1}^1\times G_{\widehat i})$ and $e\in L(\Delta)$, a non-zero partial isometry $v\in uer_1u^*L(G_{i,1}\times G_{\widehat i})p$, a $*$-isomorphism $\theta: pL(G_{i,1}^1\times G_{\widehat i})p\to C \subset  ue L(\Delta) r_1e u^*$ such that the following properties are satisfied:
\begin{enumerate}
\item [(a)] the inclusion $C \vee (C' \cap  ueL(\Delta) r_1e u^* ) \subset  ueL(\Delta) r_1e u^* $  has finite index in the sense of Pimsner-Popa \cite{PP86},
\item [(b)] $\theta(x) v =v x$ for all $x\in pL(G_{i,1}^1\times G_{\widehat i})p$. 
\end{enumerate} 

Note that $C$, $C' \cap  ueL(\Delta) r_1e u^* $ and $uL(C_{H}(\Delta)) r_1e u^* $ are commuting subalgebras of $L(G_{i,1}\times G_{\widehat i})$. Since $L(C_{H}(\Delta))$ is non-amenable, Lemma \ref{lemma.classification} implies that one of the following holds:
\begin{enumerate}
    \item [(1)] $C \vee (C' \cap  ueL(\Delta) r_1e u^* ) \prec^s_{L(G_{i,1}\times G_{\widehat i})}L(G_{i,1}^{1}\times G_{\widehat i})$, or

    \item [(2)]  $C \vee (C' \cap  ueL(\Delta) r_1e u^* ) \prec_{L(G_{i,1}\times G_{\widehat i})}L(G_{i,1}^{2}\times G_{\widehat i})$ , or

    \item [(3)] $C \vee (C' \cap  ueL(\Delta) r_1e u^* ) \prec_{L(G_{i,1}\times G_{\widehat i})}L(G_{i,1}\times G_{j,\alpha} \times G_{\widehat {i,j}})$ for some $j\neq i$ and $\alpha\in\{1,2\}$.
\end{enumerate}

We show that (1) holds.
On one hand if (2) is true, we get by Lemma \ref{lemma.intertwining} that $L(G_{i,1}^1\times G_{\widehat i})\prec_{M} L(G_{i,1}^{2}\times G_{\widehat i}),$ contradiction. On the other hand, by assuming that (3) is true, Lemma \ref{lemma.intertwining} would imply that $L(G_{i,1}^1\times G_{\widehat i})\prec_{M} L(G_{i,1}\times G_{j,\alpha} \times G_{\widehat {\{i,j\}}})$, which implies that $L(G_j)\prec_{L(G_j)} L(G_{j,\alpha})$, contradiction. Thus, (1) holds.

By Lemma \ref{lemma.count}, we get $C' \cap  ueL(\Delta) r_1e u^* \prec_{M} \mathbb C1$. Thus, one can find a non-zero projection $e_0\in C' \cap  ueL(\Delta) r_1e u^* $ such that $e_0(C' \cap  ueL(\Delta) r_1e u^*) e_0=\mathbb Ce_0$. Therefore, by \cite[Lemma 2.3]{PP86} after compressing the inclusion of (a) by $e_0$, we may assume that  the inclusion $C  \subset  ueL(\Delta) r_1e u^* $  is a finite index inclusion of II$_1$ factors. By \cite[Proposition 1.3]{PP86}, there exist $x_1,\dots,x_m\in ueL(\Delta) r_1e u^*$ such that $ueL(\Delta) r_1e u^* = \sum_{i=1}^m x_i C$. By letting $r= ur_1eu^*$, note that $ueL(\Delta) r_1e u^* \subset rMr$, and hence, 
\begin{equation}\label{eq1}
    \mathcal{QN}_{rMr}(C)''=\mathcal{QN}_{rMr}(ueL(\Delta) r_1e u^* )''.
\end{equation}

{\bf Claim 3.} ${\rm QN}_{H}^{(1)}(\Theta)=\Theta$ and $vv^*L(G_{i,1}\times G_{\widehat i}) vv^*
=vv^* ur_1L(\Theta) r_1 u^*vv^*.
$

{\it Proof of Claim 3.}
Let $l\in \mathcal U(L(G_{i,1}\times G_{\widehat i}))$ such that $lv^*v=v$. Since $v\in uer_1u^*L(G_{i,1}\times G_{\widehat i})p$, note that $vv^*\leq uer_1u^*$, which implies $vv^*ur_1=vv^*uer_1$. Since $v\in L(G_{i,1}\times G_{\widehat i})$, it follows that $vv^* L(G_{i,1}\times G_{\widehat i}) vv^*= vL(G_{i,1}\times G_{\widehat i})v^*$, and hence by Lemmas  \ref{lemma.qn.algebras} and \ref{lemma.qn.groups} we get

\begin{equation}\label{eq1}
\begin{split}
  vv^*L(G_{i,1}\times G_{\widehat i}) vv^* &= lv^*v{\mathcal{QN}_{M}(L(G_{i,1}^{1}\times G_{\widehat i})}''v^*v l^*\\
  & =  {\mathcal{QN}_{lv^*vMv^*vl^*}(lv^*vL(G_{i,1}^{1}\times G_{\widehat i})}v^*v l^*)''\\
  & ={\mathcal{QN}_{vv^*M vv^*}(Cvv^*)}''= vv^*{\mathcal{QN}_{rMr}(C)}'' vv^* \\
  & = vv^*\mathcal{QN}_{rMr}(ueL(\Delta) er_1 u^* )'' vv^*\\
  & = vv^* ur_1e\mathcal{QN}_{M}(L(\Delta) )'' er_1 u^* vv^*   \\
  & = vv^*ur_1e L({\rm QN}_{H}(\Delta)) er_1 u^* vv^*\\
  & = vv^*u L({\rm QN}_{H}(\Delta)) r_1 u^* vv^*.  
\end{split}
\end{equation}

Lemmas \ref{lemma.qn.algebras} and \ref{lemma.qn.groups} applied to \eqref{eq1} imply that 
$$
vv^*L(G_{i,1}\times G_{\widehat i}) vv^*= vv^*u L(\Theta) r_1 u^* vv^*= vv^*u L({\rm QN}_{H}(\Delta)) r_1 u^* vv^*.
$$
From \cite[Lemma 2.2]{CI17} we deduce that $[\Theta: {\rm QN}_{H}(\Delta)]<\infty$. It therefore follows that, ${\rm QN}_{H}^{(1)}(\Theta)$$ = {\rm QN}_{H}^{(1)}({\rm QN}_{H}(\Delta))=\Theta.$
\hfill$\square$

To conclude the proof, note that Claim 3, Lemma \ref{lemma.qn.algebras} and Lemma \ref{lemma.qn.groups}  show that $\mathcal{QN}_{M}^{(1)}(L(\Theta)r_1)=L(\Theta)r_1$.
Note  that
$u L(\Theta)r_1 u^* \subset L(G_{i,1}\times G_{\widehat i})$, $
vv^* uL(\Theta) r_1 u^*vv^*=vv^*L(G_{i,1}\times G_{\widehat i}) vv^*$ and $L(G_{i,1}\times G_{\widehat i})$ is a II$_1$ factor. By \cite[Lemma 2.6]{CI17},  we get that $
 uL(\Theta) r_1 u^*= ur_1u^* L(G_{i,1}\times G_{\widehat i}) ur_1u^*$.
\hfill$\blacksquare$

The following theorem follows closely the arguments used in \cite[Theorem 3.2]{CI17}) (see also \cite[Theorem 9.2]{CD-AD20}) and \cite[Proposition 4.1]{CI17}. We provide some details for the convenience of the reader.

\begin{theorem}\label{theorem.peripheral}
Let $i\in\{1,\dots,n\}$. Then there exist subgroups $\Theta_1,\Theta_2<H$, unitaries $u_1,u_2\in \mathcal U(M^t)$ such that $u_kL(\Theta_k)u_k^*=L(G_{i,k}\times G_{\widehat i})^t$, for any $k\in\{1,2\}$.
\end{theorem}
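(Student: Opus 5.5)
The starting point is Theorem \ref{theorem.first.step}, applied once with the roles of $G_{i,1}$ and $G_{i,2}$ as stated and once with them interchanged; the symmetric version holds by the same proof. This gives a subgroup $\Theta<H$ with ${\rm QN}^{(1)}_H(\Theta)=\Theta$, a unitary $u$ and central projections $r_1+r_2=1$ in $L(\Theta)$, $r_1\neq 0$, such that $uL(\Theta)r_1u^*$ is the full corner $ur_1u^*L(G_{i,1}\times G_{\widehat i})^t ur_1u^*$, while $uL(\Theta)r_2u^*$ only sits inside $L(G_{i,2}\times G_{\widehat i})^t$. The task is to show $r_2=0$, i.e. $r_1=1$. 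Once this is done, $uL(\Theta)u^*=L(G_{i,1}\times G_{\widehat i})^t$ and we set $\Theta_1=\Theta$. Repeating with the indices swapped gives $\Theta_2$.

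To kill $r_2$, I would follow \cite[Theorem 3.2]{CI17}. Since $\Theta$ is a subgroup, $L(\Theta)$ has center $L(\Theta_{fc})\cap\mathcal Z$, generated by the finite-conjugacy part. So the first step is to show that $\mathcal Z(L(\Theta))$ is trivial, or at least that $r_1,r_2$ cannot both be nonzero.

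\begin{itemize}
\item From the first bullet, $L(\Theta)r_1$ is a II$_1$ factor corner, so $r_1$ is a minimal central projection.
\item Suppose $r_2\neq0$. Then $L(\Theta)r_1$ and $L(\Theta)r_2$ are unitarily conjugate into $L(G_{i,1}\times G_{\widehat i})$ and $L(G_{i,2}\times G_{\widehat i})$ respectively.
\item The self-quasinormalizing property transfers through Lemmas \ref{lemma.qn.algebras} and \ref{lemma.qn.groups}: $\mathcal{QN}^{(1)}_M(L(\Theta))''=L(\Theta)$. It therefore gives $\mathcal{QN}^{(1)}_{r_2Mr_2}(L(\Theta)r_2)''=L(\Theta)r_2$.
\item Inside the amalgamated free product, the one-sided quasinormalizer of $uL(\Theta)r_2u^*$ should contain a corner of $L(G_{i,2}\times G_{\widehat i})$. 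This follows from \cite[Corollary 2.10]{CI17} once we know $L(\Theta)r_2\nprec L(\Sigma_i\times G_{\widehat i})$, which was shown in Claim 2 of the previous proof via Lemma \ref{lemma.augm} and Lemma \ref{lemma.count}.
\item Combining the two previous items, $L(\Theta)r_2$ is also a full corner of $L(G_{i,2}\times G_{\widehat i})$, by \cite[Lemma 2.6]{CI17}.
\end{itemize}

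Now $L(\Theta)$ would have two central summands, one a corner of $L(G_{i,1}\times G_{\widehat i})$ and one a corner of $L(G_{i,2}\times G_{\widehat i})$. We derive a contradiction from the group structure. The center of $L(\Theta)$ is generated by finite conjugacy classes of $\Theta$, so the finite normal subgroup giving $r_1,r_2$ is normalized by $\Theta$. Using the trick of \cite[Proposition 4.1]{CI17}, the centralizer $C_H(\Theta)$ and the fact that $L(G_{i,1}^1)\nprec_{L(G_i)}L(G_{i,2})$ (as in the proof of \eqref{z0}) should show that the summand $r_2$ contradicts the intertwinings of $L(G_{i,1}^1\times G_{\widehat i})$ into $L(\Delta)r_1$ only. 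Concretely, I would argue that conjugation by elements of $\Theta$ acts transitively on minimal central projections coming from the finite radical, forcing $\tau(r_1)=\tau(r_2)$-type symmetry incompatible with $r_1$ being the unique summand receiving $L(G_{i,1}^1\times G_{\widehat i})$. Alternatively, I would use that $\Theta$ is icc-like: a finite normal subgroup $F\lhd\Theta$ with $L(\Theta)r_1$ a factor.

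The main obstacle is exactly this last step, excluding the mixed central decomposition. If the transitivity argument fails, I would instead run the whole construction starting from $G_{i,2}^1$. This produces a $\Theta'$ whose $r'_1$-part is a corner of $L(G_{i,2}\times G_{\widehat i})$. Comparing $\Theta$ and $\Theta'$ via \cite[Lemma 2.2]{CI17}, they are commensurable up to conjugacy on the overlapping part, and then the finite-index statements identify them. After this, $\Theta\cap h\Theta'h^{-1}$ would have $L$-algebra intertwining into $L(\Sigma_i\times G_{\widehat i})$, while still containing a non-amenable corner. This contradicts Lemma \ref{lemma.count} and forces $r_2=0$.

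Finally, full equality $uL(\Theta)u^*=L(G_{i,1}\times G_{\widehat i})^t$ requires matching traces. Once $r_1=1$, the first bullet gives $ur_1u^*=1$, hence equality directly.
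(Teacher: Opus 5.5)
\textbf{There is a genuine gap, in two places.}

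First, your single-$\Theta$ route to making $L(\Theta)r_2$ a full corner of $L(G_{i,2}\times G_{\widehat i})^t$ does not work. \cite[Corollary 2.10]{CI17} only bounds the one-sided quasi-normalizer of $uL(\Theta)r_2u^*$ from above: it stays inside $L(G_{i,2}\times G_{\widehat i})^t$. It never produces a corner of $L(G_{i,2}\times G_{\widehat i})^t$ inside it. To invoke \cite[Lemma 2.6]{CI17} you need a corner equality $e\,uL(\Theta)r_2u^*e=e\,L(G_{i,2}\times G_{\widehat i})^t e$. For $r_1$ this came from the intertwining $L(G^1_{i,1}\times G_{\widehat i})\prec L(\Delta)r_1$ together with the finite-index argument. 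That input is absent on $r_2$ by construction: $r_0\leq r_1$ was proved precisely because $L(G^1_{i,1}\times G_{\widehat i})$ cannot embed into the $r_2$-part.

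The paper obtains the $G_{i,2}$-corner only by running Theorem \ref{theorem.first.step} a second time, starting from $G_{i,2}$, which gives $\Theta_2$. It then shows $L(\Theta_1)\prec L(\Theta_2)$ and conjugates so that $[\Theta_1:\Theta_1\cap\Theta_2]<\infty$. Next it uses \cite[Lemma 2.7]{Va10b}, Lemma \ref{lemma.augm} and Lemma \ref{lemma.count} to get $r_1^1r_2^1=r_1^2r_2^2=0$. Finally, \cite[Corollary 2.10]{CI17} gives $\Theta_2\subset\Theta_1$, and self-quasi-normalization gives $\Theta_1=\Theta_2$.

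Second, and more seriously, nothing you propose excludes the configuration that actually remains. In that configuration $L(\Theta)=L(\Theta)r\oplus L(\Theta)(1-r)$, with the two summands unitarily equal to full corners of $L(G_{i,1}\times G_{\widehat i})^t$ and $L(G_{i,2}\times G_{\widehat i})^t$ respectively.

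Your transitivity idea fails. Central projections of $L(\Theta)$ are fixed by ${\rm Ad}(v_h)$ for every $h\in\Theta$, so $\Theta$ cannot act transitively on them. There is also no reason for $\mathcal Z(L(\Theta))$ to come from a finite normal subgroup.

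Your fallback conflates two stages. Comparing $\Theta$ with $\Theta'$ yields an embedding into $L(\Sigma_i\times G_{\widehat i})^t$, and hence a contradiction with Lemma \ref{lemma.count}, only on the part where a $G_{i,1}$-summand of one subgroup overlaps a $G_{i,2}$-summand of the other. This shows the summands are aligned ($r_1^1=r_2^2$, $r_1^2=r_2^1$), not that $r_2=0$. In the aligned split case no piece of $L(\Theta)$ embeds into $L(\Sigma_i\times G_{\widehat i})^t$, so Lemma \ref{lemma.count} gives nothing.

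The paper rules out exactly this case with a separate argument, following \cite[Proposition 4.1]{CI17}. That argument uses that $\Sigma_i$ is icc, a hypothesis your proposal never invokes. Without it your argument cannot reach $r_2=0$.
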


{\it Proof.} By Theorem \ref{theorem.first.step}, for any $k\in\{1,2\}$ there exists a subgroup $\Theta_k<H$ with ${\rm QN}_{H}^{(1)}(\Theta_k)=\Theta_k$, a unitary $u_k\in \mathcal U(M)$, projections $r_k^1,r_k^2\in \mathcal Z(L(\Theta_k))$ with $r_k^1+r_k^2=1$ and $r_k^1\neq 0$ such that
\begin{enumerate}
    \item $u_k L(\Theta_k)r_k^1 u_k^*= u_kr_k^1u_k^* L(G_{i,k}\times G_{\widehat i})^t u_kr_k^1u_k^*$,
    \item $u_k L(\Theta_k)r_k^2 u_k^* \subset u_kr_k^2u_k^* L(G_{i,3-k}\times G_{\widehat i})^t u_kr_k^2u_k^*$.
\end{enumerate}

If $r_k^1=1$ for any $k\in\{1,2\}$, then we are done. Without loss of generality, let us assume that $r_1^1\neq 1$ and derive a contradiction. Since $r_1^2\neq 0$, condition (2) implies that $ L(\Theta_1) \prec_{M^t} L(G_{i,2}\times G_{\widehat i})^t$. Condition (1) together with \cite[Lemma 2.4]{DHI16} implies that $L(G_{i,2}\times G_{\widehat i})^t\prec_{M^t}^s L(\Theta_2)$. By \cite[Lemma 3.7]{Va08} it follows that  $L(\Theta_1)\prec_M L(\Theta_2)$, and by \cite[Lemma 2.2]{CI17} there exists $h\in H$ such that $[\Theta_1: \Theta_1 \cap h \Theta_2 h^{-1}]<\infty$. Up to replacing $\Theta_2$ by $h\Theta_2 h^{-1}$ we may assume that $[\Theta_1:\Theta]<\infty,$ where $\Theta=\Theta_1\cap\Theta_2$.

We continue by proving that $r_1^1 r_2^1=r_1^2 r_2^2=0$. Indeed, if this does not hold, then by \cite[Lemma 2.7]{Va10b} there exists $g\in G_i$ such that $L(\Theta)\prec_{M^t} L((G_{1,1}\cap g G_{1,2} g^{-1})\times G_{\widehat{i}})^t$, thus, $L(\Theta)\prec_{M^t} L( \Sigma_i \times G_{\widehat{i}})^t$.
By combining with condition (1), the fact that $[\Theta_1:\Theta]<\infty$ and Lemma \ref{lemma.augm}, we get that $\Delta(L(G_{i,1}\times G_{\widehat i})^t)\prec_{M^t\bar\otimes M^t}M^t\bar\otimes L( \Sigma_i \times G_{\widehat{i}})^t$. 
This contradicts Lemma \ref{lemma.count}.

Since $r_1^1 r_2^1 = (1-r_1^1)(1-r_2^1)=0$,
it follows that $r_1^1+r_2^1=1$, and hence, $r_2^2=r_1^1$. By using (1) for $k=1$ and (2) for $k=2$, it follows that 
\begin{equation}\label{eq.11}
u_1 L(\Theta)r_1^1 u_1^*\subset
u_1 L(\Theta_1)r_1^1 u_1^*= u_1r_1^1u_1^* L(G_{i,1}\times G_{\widehat i})^t u_1r_1^1u_1^* 
\end{equation}
and
\begin{equation}\label{eq.12}
u_2 L(\Theta)r_1^1 u_2^*\subset
 u_2 L(\Theta_2)r_1^1 u_2^* \subset u_2r_1^1u_2^* L(G_{i,1}\times G_{\widehat i})^t u_2r_1^1u_2^*.    
\end{equation}

Since $L(\Theta)\nprec_{M^t}L(\Sigma_i\times G_{\widehat i})^t$ and 
$
u_1 r_1^1 u_2^*(u_2 L(\Theta) r_1^1 u_2^*) = (u_1 L(\Theta) r_1^1 u_1^*)u_1r_1^1u_2^*,
$
 by \eqref{eq.11}, \eqref{eq.12} and \cite[Corollary 2.10]{CI17}, we get that $u_1r_1^1u_2^*\in L(G_{i,1}\times G_{\widehat i})^t$. By \eqref{eq.11} and \eqref{eq.12} we further get $L(\Theta_2)r_1^1\subset L(\Theta_1)r_1^1$. This implies that $\Theta_2\subset\Theta_1$. Thus, $\Theta=\Theta_2$, and hence, $[\Theta_1:\Theta_2]<\infty$. Since ${\rm QN}_{H}^{(1)}(\Theta_k)=\Theta_k$, for any $k\in\{1,2\}$, it follows that $\Theta_1=\Theta_2=\Theta$.

Finally, by proceeding as in the proof of \cite[Proposition 4.1]{CI17}, condition (1)  contradicts the icc condition of the group $\Sigma_i$. 
\hfill$\blacksquare$


\begin{remark}\label{remark.Wsup.amplification} We note that the above result also shows that if $G_1=G_{1,1}*_{\Sigma_1}G_{1,2}\in \mathcal C_{AFP}^0$ and $L(G_1)^t\cong L(H_1)$ for some $t>0$ and countable group $H_1$, then $t=1$. Indeed, by assuming $M=L(G_1)$ and $M^t=L(H_1)$, Theorem \ref{theorem.peripheral} shows that there exist a unitary $u\in \mathcal U(L(M^t))$ and a subgroup $\Theta_1<H_1$ satisfying $uL(\Theta_1)u^* \cong L(G_{1,1})^t$. By \cite[Corollary B]{CD-AD20}, it follows that $t=1$.
\end{remark}





\section{Proofs of the main results}\label{section.proofs}

\subsection{Proof of Theorem \ref{B}}
By Theorem \ref{theorem.peripheral}, there is $u\in\mathcal U(M)$ and a subgroup $\Theta_n<H$ such that $uL(\Theta_n)u^*=L(G_{\widehat n}\times G_{n,1})^t$. By Theorem \ref{theorem.product.rigidity}, there exist a subgroup $\Lambda_{ n }<H$ and a unitary $w\in \mathcal U(M)$ and $t'>0$ such that $wL(\Lambda_n) w^* = L(G_{\widehat n})^{t'}$. By \cite[Theorem 4.6]{CdSS17} and \cite[Theorem 4.7]{CdSS17}, it follows that there exist a product decomposition $H=H_{\widehat n}\times H_n$, a positive number $t_n>0$ and a unitary $\omega\in \mathcal U(M)$ such that $\omega L(H_{\widehat n})  \omega^* = L(G_{\widehat n})^{t_n}$ and $\omega L(H_{ n})  \omega^* = L(G_{ n})^{1/{t_n}}$. The theorem follows by repeating the same arguments finitely many times.
\hfill$\blacksquare$

\subsection{Proof of Theorem \ref{A}}
The result follows by combining Theorem \ref{B}, Remark \ref{remark.Wsup.amplification} and \cite[Theorem C]{CD-AD20}.
\hfill$\blacksquare$

\subsection{Proof of Corollary \ref{corA}}
By Lemma \ref{lemma.trivial.radical}, the amenable radical of $G$ is trivial and so $C^*_r(G)$ has a unique trace by \cite[Theorem~1.3]{BKKO14}. It follows that the isomorphism $\theta$ extends to an isomorphism of the group von Neumann algebras. The result now follows from Theorem \ref{A}.
\hfill$\blacksquare$

\end{document}